\newtheorem{claim}[theorem]{Claim}
\title{Almost all string graphs are intersection graphs of plane convex sets}
\titlerunning{Almost all string graphs are intersection graphs of plane convex sets}
\author{J\'{a}nos Pach}{Ecole Polytechnique F\'{e}d\'{e}rale de Lausanne and R\'{e}nyi Institute, Hungarian Academy of Sciences\\{[Lausanne, Switzerland and Budapest, Hungary]}}{pach@cims.nyu.edu}{}{[Supported by Swiss National Science Foundation Grants 200021-165977 and 200020-162884.]}
\author{Bruce Reed}{School of Computer Science, McGill University, Laboratoire I3S CNRS, and Professor Visitante Especial, IMPA\\{[Montreal, Canada and Rio de Janeiro, Brazil]}}{breed@cs.mcgill.ca}{}{}
\author{Yelena Yuditsky}{School of Computer Science, McGill University\\{[Montreal, Canada]}}{yuditskyL@gmail.com}{}{}
\authorrunning{J. Pach, B. Reed, Y. Yuditsky}
\subjclass{G.2.2 Graph Theory}
\keywords{String graph, intersection graph, plane convex set}
\begin{document}

\maketitle

\begin{abstract}
A {\em string graph} is the intersection graph of a family of continuous arcs in the plane. The intersection graph of a family of plane convex sets is a string graph, but not all string graphs can be obtained in this way. We prove the following structure theorem conjectured by Janson and Uzzell: The vertex set of {\em almost all} string graphs on $n$ vertices can be partitioned into {\em five} cliques such that some pair of them is not connected by any edge ($n\rightarrow\infty$). We also show that every graph with the above property is an intersection graph of plane convex sets. As a corollary, we obtain that {\em almost all} string graphs on $n$ vertices are intersection graphs of plane convex sets.
\end{abstract}

\section{Overview}

The {\it intersection graph} of a collection $C$ of sets is a graphs whose vertex set is $C$ and in which two sets in $C$ are connected by an edge if and only if they have nonempty intersection. A {\it curve} is a subset of the plane which is homeomorphic to the interval $[0,1]$. The intersection graph of a finite collection of curves (``strings'') is called a {\it string graph}.
\smallskip

Ever since Benzer \cite{Be59} introduced the notion in 1959, to explore the topology of genetic structures, string graphs have been intensively studied both for practical applications and theoretical interest. In 1966, studying electrical networks realizable by printed circuits, Sinden \cite{Si66} considered the same constructs at Bell Labs. He proved that not every graph is a string graph, and raised the question whether the recognition of string graphs is decidable. The affirmative answer was given by Schaefer and \v Stefankovi\v c \cite{ScSt04} 38 years later. The difficulty of the problem is illustrated by an elegant construction of Kratochv\'il and Matou\v sek \cite{KrMa91}, according to which there exists a string graph on $n$ vertices such that no matter how we realize it by curves, there are two curves that intersect at least $2^{cn}$ times, for some $c>0$. On the other hand, it was proved in \cite{ScSt04} that every string graph on $n$ vertices and $m$ edges can be realized by polygonal curves, any pair of which intersect at most $2^{c' m}$ times, for some other constant $c'$. The problem of recognizing string graphs is NP-complete~\cite{Kr91,ScSeSt03}.

\smallskip

In spite of the fact that there is a wealth of results for various special classes of string graphs, understanding the structure of general string graphs has remained an elusive task.
The aim of this paper is to show that {\em almost all} string graphs have a very simple structure. That is, the proportion of string graphs that possess this structure tends to $1$ as $n$ tends to infinity.

Given any graph property $\textsc{P}$ and any $n\in \mathbb{N}$, we denote by $\textsc{P}_n$ the set of all graphs with property $\textsc{P}$ on the (labeled) vertex set $V_n=\{1,\ldots,n\}$. 
In particular, ${\textsc{String}}_n$ is the collection of all string graphs with the vertex set $V_n$.
We say that an $n$-element set is partitioned into parts of {\em almost equal size} if the sizes of any two parts differ by at most $n^{1-\epsilon}$ for some $\epsilon>0$, provided that $n$ is sufficiently large.

\begin{theorem}\label{main}
As $n\rightarrow\infty$, the vertex set of almost every string graph $G\in {\textsc{String}}_n$ can be partitioned into $4$ parts of almost equal size such that $3$ of them induce a clique in $G$ and the $4$th one splits into two cliques with no edge running between them.
\end{theorem}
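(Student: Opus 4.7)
My plan is to pin down $|\textsc{String}_n|$ to within a $2^{o(n^2)}$ factor by matching upper and lower bounds of the form $2^{(9/25)n^2+o(n^2)}$, and then to use a stability argument to show that almost every string graph realises (up to $o(n^2)$ edge edits) the extremal configuration described in the theorem. A final cleanup step will upgrade the approximate partition to an exact one, balanced to within $n^{1-\epsilon}$.

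For the lower bound I would fix an ordered partition of $V_n$ into five almost-equal parts $V_1,\ldots,V_5$ together with a distinguished pair $\{i,j\}$, and consider every graph in which each $V_k$ is a clique, no edges run between $V_i$ and $V_j$, and the remaining nine bipartite graphs between parts are arbitrary. Each such graph has the structure stated in the theorem, and by the companion result quoted in the abstract it is an intersection graph of plane convex sets, hence a string graph. Multiplying the roughly $2^{9(n/5)^2}=2^{9n^2/25}$ graphs per partition by the number of partitions (and correcting the over-counting, which is easily controlled because typical graphs of this form admit essentially a unique such partition) already yields $|\textsc{String}_n|\geq 2^{(9/25)n^2-o(n^2)}$.

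The main obstacle is the matching upper bound. Here I would apply Szemer\'edi's regularity lemma to a typical $G\in\textsc{String}_n$ to obtain a reduced weighted graph $R$ on a constant number of clusters, and then use the fact that dense regular blow-ups of patterns appearing in $R$ must themselves be realisable by strings. By combining a topological obstruction---in the spirit of the Kratochv\'il--Matou\v sek non-realisability arguments, but applied to regular blow-ups rather than adversarial constructions---with an entropy maximisation over the permissible density patterns, one shows that the only patterns attaining the extremal entropy are the $K_5$-minus-an-edge templates in which the five diagonal (within-cluster) densities equal $1$, the nine admissible off-diagonal densities equal $1/2$, and the distinguished off-diagonal density equals $0$. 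This forces a typical $G$ to be within $o(n^2)$ edits of such a template. A standard cleanup, in which each vertex is re-assigned to the part that minimises its local defect, then produces an exact partition into three cliques plus a fourth part splitting as two cliques with no edges between them; a concentration bound on the number of re-assigned vertices keeps the part sizes balanced to within $n^{1-\epsilon}$, completing the proof.
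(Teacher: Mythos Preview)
Your proposal contains a fundamental numerical error that makes the whole strategy collapse. You have read the target structure as a partition into \emph{five} almost-equal cliques with one missing bipartite block, and you compute the resulting exponent as $9n^2/25$. But the theorem asserts a partition into \emph{four} almost-equal parts, three of which are cliques and the fourth of which splits (into two cliques of \emph{unspecified} relative sizes). The number of free pairs in such a configuration is
\[
\binom{n}{2}-\sum_{i=1}^{4}\binom{|Y_i|}{2}\;\approx\;\frac{3}{4}\binom{n}{2}\;\approx\;\frac{3}{8}n^2,
\]
and indeed the Pach--T\'oth result $\chi_c(\textsc{String})=4$ already gives $|\textsc{String}_n|=2^{(3/4+o(1))\binom{n}{2}}$. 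Since $3/8>9/25$, your claimed upper bound $|\textsc{String}_n|\le 2^{(9/25)n^2+o(n^2)}$ is simply false, and your lower bound, while valid, is not tight enough to drive any stability argument. The optimiser of the entropy functional you allude to is \emph{not} the balanced five-part template; it is the balanced four-part template, with the internal split of the fourth part contributing only a lower-order $2^{|Y_4|}$ factor.

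Beyond this, the second half of your sketch is far too optimistic even after correcting the exponent. Regularity plus entropy maximisation can at best show that a typical string graph is within $o(n^2)$ edits of the extremal template; this is essentially the Janson--Uzzell graphon result and is much weaker than what is being claimed. Upgrading ``$o(n^2)$ wrong edges'' to ``exact partition with parts balanced to $n^{1-\epsilon}$'' is not a standard cleanup: it is the entire content of the paper. The paper's route is quite different from yours. It starts not from regularity but from the Alon--Balogh--Bollob\'as--Morris structure theorem, which for almost every string graph gives a partition into four parts that are $U(k)$-free up to a set of size $n^{1-\epsilon}$, together with a bounded ``template'' set $B$ controlling all neighbourhoods. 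It then uses a concrete topological obstruction (a $K_5$-subdivision-type non-string graph) and its corollaries to rule out, via counting, the possibility that any $S_i$ contains many disjoint copies of a stable triple, a $P_3$, or (for more than one $i$) a triangle-plus-vertex. This forces three of the parts to be cliques and the fourth to be a disjoint union of two cliques, outside a set of size $n^{1-\gamma}$. A separate and delicate argument then absorbs the remaining $n^{1-\gamma}$ exceptional vertices, again by trading off the number of projections against a deficit in the number of admissible bipartite extensions. None of these steps is captured by ``re-assign each vertex to the part that minimises its local defect.''
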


\begin{theorem}\label{main2}
Every graph $G$ whose vertex set can be partitioned into $4$ parts such that $3$ of them induce a clique in $G$ and the $4$th one splits into two cliques with no edge running between them, is a string graph.
\end{theorem}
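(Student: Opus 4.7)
The plan is to prove Theorem~\ref{main2} by an explicit construction of strings. Fix the partition $V(G)=A_1\cup A_2\cup A_3\cup B_1\cup B_2$ guaranteed by the hypothesis, so that each of the five parts is a clique of $G$ and no edge runs between $B_1$ and $B_2$. My goal is to build, for each $v\in V(G)$, a simple curve $\gamma_v$ in the plane such that $\gamma_v\cap\gamma_u\neq\emptyset$ if and only if $uv\in E(G)$.

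First I place five pairwise disjoint small \emph{home disks} $D_1,\ldots,D_5$ in the plane, one per clique class, arranged along a horizontal axis in the order $D_4, D_1, D_2, D_3, D_5$, with $D_4$ (the home of $B_1$) and $D_5$ (the home of $B_2$) placed far apart at the two extremes. In each disk $D_c$ I fix an anchor point $a_c$ and insist that every curve of the corresponding class pass through $a_c$; this instantly realizes each of the five within-class cliques. I additionally confine every curve of $B_1$ (respectively $B_2$) to stay entirely inside $D_4$ (respectively $D_5$). Since $D_4$ and $D_5$ are disjoint, no $B_1$-curve can meet a $B_2$-curve, matching the absence of edges between those classes.

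All remaining edges of $G$ have an endpoint in some $A_i$, and I realize such an edge $\{v,u\}$ with $v\in A_i$ and $u$ in class $c'\neq i$ by attaching to $\gamma_v$ a thin \emph{excursion}: a sub-arc of $\gamma_v$ that leaves $D_i$, travels through a dedicated planar corridor to $D_{c'}$, enters $D_{c'}$ to cross $\gamma_u$, and returns. To make the crossing selective---hitting $\gamma_u$ but no other curve of class $c'$---I arrange the piece of each $\gamma_u$ inside $D_{c'}$ as a short radial segment occupying a distinct angular sector around $a_{c'}$, and route the excursion so that inside $D_{c'}$ it crosses exactly the sectors of $v$'s neighbors. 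A single $\gamma_v$ may use several entries into $D_{c'}$ to realize a non-contiguous set of neighbors while remaining a simple arc. I assign a dedicated corridor to each ordered pair $(i,c')$ so that excursions associated with different class pairs cannot meet outside the home disks.

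The main technical obstacle is to prevent spurious intersections among excursions: inside one corridor, excursions from different $A_i$-curves must not cross each other, and inside each target disk $D_{c'}$ each excursion must hit precisely the intended radial segments. I handle the first point by linearly ordering the $A_i$-curves and stacking their excursions inside the corridor in a nested fashion, so that any two $\gamma_v, \gamma_{v'}$ in $A_i$ meet only at $a_i$ inside $D_i$. I handle the second by matching the stacking order in the corridor with the angular order of sectors in $D_{c'}$, so that each excursion's chord(s) land in exactly the chosen sectors. This routing reduces to a planar drawing problem for a bounded auxiliary multigraph on the five home disks, and a careful but straightforward construction carries it out with no global obstruction, yielding the required curves $\gamma_v$ and proving that $G$ is a string graph.
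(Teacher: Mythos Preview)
Your approach is genuinely different from the paper's and is sound in outline, but it is considerably sketchier at the crucial step. The paper does not construct strings directly; instead it proves the stronger statement that every canonical graph is the intersection graph of \emph{convex} sets (Theorem~\ref{canonical}), from which Theorem~\ref{main2} follows at once since $\textsc{Conv}_n\subset\textsc{String}_n$. Concretely, the paper applies Koebe's circle packing theorem to the planar graph $K_5$ minus an edge, obtaining five tangent disks corresponding to the five cliques; near each tangency it places $2^{n_i}$ coded points on a short boundary arc, assigns to every vertex $v$ the convex hull of a suitable finite point set, and verifies via a short tangent-line argument that these convex polygons meet exactly when required. The convexity absorbs all of the ``no spurious crossings'' bookkeeping in one stroke.

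Your construction and the paper's share the same underlying reason for success, namely the planarity of $K_5-e$, which you invoke implicitly when you assert that disjoint corridors between the five home disks exist. Where your write-up is thin is the sentence ``a careful but straightforward construction carries it out with no global obstruction.'' Several nontrivial issues are hiding there: a single $A_i$-curve must be a simple arc that visits up to four corridors in sequence while also serving as a ``receiver'' inside $D_i$ for incoming excursions from other $A$-classes; inside a target disk such as $D_4$ you have three incoming corridors, and excursions from different classes must be routed so that they never meet one another while each still selects an arbitrary subset of the $B_1$ radial segments; and nested stacking in a corridor does not by itself match arbitrary (non-interval) neighbourhoods to the angular order in the target disk. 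All of this can indeed be done (e.g., with short radial segments, layered annular levels, and reserved angular drop-zones per class), but it is real work, not a one-liner. The paper's Koebe/convex-hull route buys you a much cleaner and shorter proof and a stronger conclusion; your route is more elementary (no Koebe) but would need a page of careful local constructions to be complete.
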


Theorem \ref{main} settles a conjecture of Janson and Uzzell from \cite{JaU17}, where a related weaker result was proved in terms of graphons.
\smallskip

We also prove that a typical string graph can be realized using relatively simple strings.
\smallskip

Let ${\textsc{Conv}}_n$ denote the set of all intersection graphs of families of $n$ labeled convex sets $\{C_1,\ldots,C_n\}$ in the plane. For every pair $\{C_i, C_j\}$, select a point in $C_i\cap C_j$, provided that such a point exists. Replace each convex set $C_i$ by the polygonal curve obtained by connecting all points selected from $C_i$ by segments, in the order of increasing $x$-coordinate. Observe that any two such curves belonging to different $C_i$s intersect at most $2n$ times. The intersection graph of these curves (strings)  is the same as the intersection graph of the original convex sets, showing that ${\textsc{Conv}}_n\subseteq {\textsc{String}}_n$. Taking into account the construction of Kratochv\'il and Matou\v sek \cite{KrMa91}  mentioned above, it easily follows that the sets ${\textsc{Conv}}_n$ and ${\textsc{String}}_n$ are not the same, provided that $n$ is sufficiently large.

\begin{theorem}\label{distinction}
There exist string graphs that cannot be obtained as intersection graphs of convex sets in the plane.
\end{theorem}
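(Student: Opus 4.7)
The plan is to combine the two facts already recorded in the paragraph preceding the statement. First, the Kratochv\'\i l--Matou\v sek construction \cite{KrMa91} provides, for every sufficiently large $n$, a graph $G_n\in\textsc{String}_n$ with the following property: in any realization of $G_n$ by a family of curves in the plane, there exist two curves that cross at least $2^{cn}$ times, where $c>0$ is an absolute constant. Second, the discussion just above the theorem shows that every graph in $\textsc{Conv}_n$ admits a realization by polygonal strings in which any two strings intersect in at most $2n$ points: given convex sets $C_1,\dots,C_n$, one chooses a point in each nonempty pairwise intersection $C_i\cap C_j$ and replaces each $C_i$ by the $x$-monotone polygonal curve through the points selected from it, and the convexity of the $C_i$ guarantees the $2n$ bound on pairwise crossings.

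With these two ingredients the proof is immediate. Suppose, for contradiction, that $G_n\in\textsc{Conv}_n$ for every $n$, and fix $n$ large enough that $2n<2^{cn}$. Applying the polygonal construction above to any family of convex sets representing $G_n$ yields a realization of $G_n$ by strings in which every pair crosses at most $2n$ times. This contradicts the Kratochv\'\i l--Matou\v sek lower bound $2^{cn}$, so $G_n\notin\textsc{Conv}_n$, establishing the theorem.

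There is essentially no obstacle in this proof beyond verifying that the polygonal curves built from convex sets really do pairwise intersect at most $2n$ times; this is the only step one might wish to write out in more detail, but it follows from the observation that two $x$-monotone curves, each passing through at most $n$ points of intersection with the other's underlying convex set, can cross only between consecutive selected points and that convexity limits the number of such crossings linearly in $n$. The rest is a one-line comparison of the two bounds.
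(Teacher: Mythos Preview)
Your proof is correct and follows exactly the argument the paper gives in the paragraph immediately preceding the theorem: compare the Kratochv\'il--Matou\v sek exponential lower bound on crossings with the $2n$ upper bound obtained by replacing each convex set with an $x$-monotone polygonal curve through the selected pairwise-intersection points. The only small imprecision is that the $2n$ bound comes from $x$-monotonicity and piecewise linearity (two such curves with at most $n-1$ vertices each cross at most $2n-1$ times), not from convexity per se; convexity is what guarantees the polygonal curves stay inside the original sets and hence have the same intersection graph.
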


We call a graph $G$ {\em canonical} if its vertex set can be partitioned into $4$ parts such that $3$ of them induce a clique in $G$ and the $4$th one splits into two cliques with no edge running between them. The set of canonical graphs on $n$ vertices is denoted by ${\textsc{Canon}}_n$. Theorem \ref{main2} states ${\textsc{Canon}}_n\subset {\textsc{String}}_n$. In fact, this is an immediate corollary of ${\textsc{Conv}}_n\subset{\textsc{String}}_n$ 
and the relation ${\textsc{Canon}}_n\subset{\textsc{Conv}}_n$, formulated as

\begin{theorem}\label{canonical}
The vertices of every canonical graph $G$ can be represented by convex sets in the plane such that their intersection graph is $G$.
\end{theorem}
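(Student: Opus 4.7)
The plan is to construct the convex sets $S_v$ explicitly. Let the canonical partition be $V(G) = V_1 \cup V_2 \cup V_3 \cup V_4 \cup V_5$, where each $V_i$ induces a clique in $G$ and $E(V_4, V_5) = \emptyset$. I would begin by choosing five ``anchor points'' $p_1, \ldots, p_5$ in the plane such that $p_4$ and $p_5$ lie strictly on opposite sides of a horizontal line $\ell$, while $p_1, p_2, p_3$ are placed so that convex sets containing them can reach into both half-planes determined by $\ell$. For every vertex $v \in V_i$, the set $S_v$ will be required to contain $p_i$; since any two sets in $V_i$ share this point, each $V_i$ automatically becomes a clique in the resulting intersection graph.

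Next, to realize the edges across parts, I would introduce, for each edge $vu$ with $v \in V_i$ and $u \in V_j$ ($i \ne j$), a ``meeting point'' $r_{vu}$ in the plane, and define $S_v = \mathrm{conv}(\{p_i\} \cup \{r_{vu} : u \in N_G(v) \setminus V_i\})$. By construction, $r_{vu} \in S_v \cap S_u$ for every edge, so all edges are realized. The two remaining issues are (a) preventing any intersection between a $V_4$-set and a $V_5$-set, and (b) preventing accidental intersections for non-edges across all other pairs. For (a), I would place every meeting point involving $V_4$ strictly above $\ell$ and every meeting point involving $V_5$ strictly below; the convex hull defining each $V_4$-set is then contained in the upper half-plane, each $V_5$-set in the lower, and the two families are forced to be disjoint. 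This is consistent because $E(V_4, V_5) = \emptyset$, so there are no meeting points straddling $\ell$.

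For (b), I would place the meeting points for each pair $(V_i, V_j)$ along a strictly convex curve (such as a parabola) with the anchor $p_i$ on its \emph{convex} side, and similarly for $p_j$. The strict convexity guarantees that the convex hull of $p_i$ together with any subset of meeting points on the curve stays above the curve and therefore avoids all other, unselected meeting points on it; in particular, no spurious adjacency arises within a single pair. The main obstacle is coordinating these pair-specific constructions globally: a single set $S_v$ may incorporate meeting points from several different pair-curves, and a priori its convex hull could reach into a region belonging to a pair of parts with which $v$ has no business. I would handle this by arranging the curves for the various pairs inside narrow angular ``corridors'' emanating from each anchor, widely separated around $p_i$, so that the convex hull of $p_i$ with meeting points drawn from any two corridors remains confined to those corridors and misses the meeting points in every third. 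Verifying all required non-intersections then reduces to a routine but careful piece of plane geometry, carried out by placing the anchors in sufficiently generic position and the curves sufficiently far from each other relative to their own diameters.
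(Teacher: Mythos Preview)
Your argument for (b) has a real gap. You claim that placing the meeting points for the pair $(V_i,V_j)$ on a strictly convex curve with both $p_i$ and $p_j$ on its convex side ensures that ``no spurious adjacency arises within a single pair''. But what you actually argue is only that $\mathrm{conv}(p_i\cup\{\text{selected points}\})$ misses the \emph{unselected points} on the curve; this is strictly weaker than missing $\mathrm{conv}(p_j\cup\{\text{other selected points}\})$, which is what you need for a non-edge. Concretely, take $V_i=\{v_1,v_2\}$, $V_j=\{u_1,u_2\}$ with only the matching edges $v_1u_1$ and $v_2u_2$, put $p_i=(-10,200)$, $p_j=(10,200)$ and the two meeting points $r_{11}=(1,1)$, $r_{22}=(-1,1)$ on the parabola $y=x^2$. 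Both anchors lie on the convex side, and each segment $p_ir_{11}$, $p_jr_{22}$ stays above the parabola and misses the other meeting point, so your stated hypotheses hold. Yet $S_{v_1}=p_ir_{11}$ and $S_{u_2}=p_jr_{22}$ cross at $(0,\tfrac{210}{11})$, giving the forbidden intersection for the non-edge $v_1u_2$. More generally, whenever $p_i$ and $p_j$ sit on the same side of the chord through two meeting points (which is forced when both are ``above'' the curve), the four points are in convex position with $p_i,p_j$ adjacent, and one of the two diagonal pairs you need to be disjoint is exactly the crossing pair.

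The paper sidesteps this with an asymmetric encoding rather than one meeting point per edge. It first invokes Koebe's theorem to realise $K_5$ minus an edge by tangent disks $D_1,\dots,D_5$; on a short arc of $\partial D_i$ near each tangency $t_{ij}$ it places one point $p_{ij}(A)$ for every subset $A\subseteq V_i$. A vertex in $V_j$ picks up exactly \emph{one} such point---the one indexing its full neighbourhood in $V_i$---while a vertex $v\in V_i$ picks up every $p_{ij}(A)$ with $v\in A$. Because each $V_j$-hull meets the arc at a single point, a tangent-line argument shows it cannot reach any other point of that arc, and the tangent-disk geometry keeps each hull confined to a neighbourhood of its own disk. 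Your per-edge scheme does not produce this single-point-of-contact property, and without some replacement idea it will keep generating crossings of the type above; the ``corridors'' handle only inter-pair interference, not this intra-pair failure.
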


The converse is not true. Every planar graph can be represented as the intersection graph of convex sets in the plane (Koebe ~\cite{Ko36}). Since no planar graph contains a clique of size exceeding four, for $n>20$ no planar graph
with $n$ vertices is canonical.

\smallskip

Combining Theorems~\ref{main} and~\ref{canonical}, we obtain the following.

\begin{corollary}
Almost all string graphs on $n$ labeled vertices are intersection graphs of convex sets in the plane.
\end{corollary}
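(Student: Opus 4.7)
The plan is to deduce the corollary as a one-line consequence of Theorems \ref{main} and \ref{canonical}, via the chain of inclusions
\[
\textsc{Canon}_n \;\subseteq\; \textsc{Conv}_n \;\subseteq\; \textsc{String}_n.
\]
The second inclusion is the preliminary observation from the introduction: given any family of $n$ plane convex sets, replacing each $C_i$ by the polygonal arc through one point selected from each nonempty pairwise intersection yields a family of curves with the same intersection graph, so every convex-set intersection graph is a string graph. The first inclusion is precisely the content of Theorem \ref{canonical}: every canonical graph admits a representation by convex sets in the plane.

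With the two inclusions in hand, I would invoke Theorem \ref{main} to conclude that
\[
\frac{|\textsc{String}_n \cap \textsc{Canon}_n|}{|\textsc{String}_n|} \;\longrightarrow\; 1 \qquad (n \to \infty),
\]
since almost every labeled string graph on $V_n$ admits a balanced canonical partition. Combining this with the two inclusions, and noting that the numerator is trivially at most the denominator, I obtain
\[
1 \;\geq\; \frac{|\textsc{Conv}_n|}{|\textsc{String}_n|} \;\geq\; \frac{|\textsc{String}_n \cap \textsc{Canon}_n|}{|\textsc{String}_n|} \;\longrightarrow\; 1,
\]
which is the statement of the corollary.

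The main obstacle is emphatically not in this deduction, which is a two-line argument once the inputs are available. All of the real work is concentrated in the two theorems feeding it: Theorem \ref{canonical}, which is an explicit geometric construction of convex representatives for the four canonical blocks, and especially Theorem \ref{main}, whose proof requires a delicate enumeration/structural analysis of \textsc{String}$_n$ in the spirit of Kleitman--Rothschild-type results for hereditary graph classes, controlling simultaneously the count of string graphs and the typical coarse structure of their vertex partitions. Once those two ingredients are established, the corollary is immediate.
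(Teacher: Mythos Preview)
Your proposal is correct and matches the paper's own argument: the corollary is presented there simply as the combination of Theorems~\ref{main} and~\ref{canonical}, with no additional work. Your write-up makes the squeeze $\textsc{Canon}_n \subseteq \textsc{Conv}_n \subseteq \textsc{String}_n$ explicit, which is exactly the intended deduction.
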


See Figure \ref{fig:classes} for a sketch of the containment relation of the families of graphs discussed above. 
\begin{figure}
    \centering
                \includegraphics[width=0.45\textwidth]{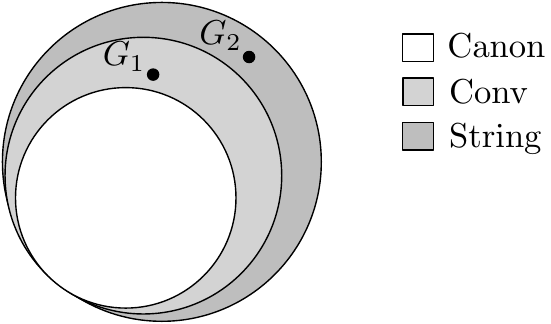}
                \caption{The graph $G_1$ is the any planar graph with more than $20$ vertices. The graph $G_2$ is the graph from the construction of Kratochv\'il and Matou\v sek \cite{KrMa91}.\label{fig:classes}}
\end{figure}

\medskip

The rest of this paper is organized as follows. In Section~\ref{speed}, we recall the necessary tools from extremal graph theory, and adapt a partitioning technique of Alon, Balogh, Bollob\'as, and Morris~\cite{AlBBM11} to analyze string graphs; see Theorem~\ref{abbm.cor}. Since the modifications are not entirely straightforward, we include a sketch of the proof of Theorem~\ref{abbm.cor} in the appendix. In Section~\ref{section3}, we collect some simple facts about string graphs and intersection graphs of plane convex sets, and combine them to prove Theorem~\ref{canonical}. In Section~\ref{firstproof}, we strengthen Theorem~\ref{abbm.cor} in two different ways and, hence, prove Theorem~\ref{main} modulo a small number of exceptional vertices. We wrap up the proof of Theorem~\ref{main} in Section~\ref{mainproof}.

\section{The structure of typical graphs in an  hereditary family}\label{speed}

A {\em graph property} ${\textsc P}$ is called {\em hereditary} if every induced subgraph of a graph $G$ with property ${\textsc P}$ has property ${\textsc P}$, too. With no danger of confusion, we use the same notation ${\textsc P}$ to denote a (hereditary) graph property and the family of all graphs that satisfy this property. Clearly, the properties that a graph $G$ is a string graph ($G\in{\textsc{String}}$) or that $G$ is an intersection graph of plane convex sets ($G\in{\textsc{Conv}}$) are hereditary. The same is true for the properties that $G$ contains no subgraph, resp., no induced subgraph isomorphic to a fixed graph $H$.

It is a classic topic in extremal graph theory to investigate the typical structure of graphs in a specific hereditary family. This involves proving that almost all graphs in the family have a certain structural decomposition. This research is inextricably linked to the study of the growth rate of the function $|{\textsc P}_n|$, also known as the {\it speed} of ${\textsc P}$, in two ways. Firstly, structural decompositions may give us bounds on the growth rate. Secondly, lower bounds on the growth rate help us to prove that the size of the exceptional family of graphs which fail to have a specific structural decomposition is negligible.
In particular, we will both use a preliminary bound on the speed in proving our structural result about string graphs, and apply our theorem to improve the best known current bounds on the speed of the string graphs.
\medskip

In a pioneering paper, Erd\H os, Kleitman, and Rothschild~\cite{ErKR76} approximately determined for every $t$ the speed of the property that the graph contains no clique of size $t$. Erd\H os, Frankl, and R\"odl~\cite{ErFR86} generalized this result as follows. Let $H$ be a fixed graph with chromatic number $\chi(H)$. Then every graph of $n$ vertices that does not contain $H$ as a (not necessarily induced) subgraph can be made $(\chi(H)-1)$-partite by the deletion of $o(n^2)$ edges. This implies that the speed of the property that the graph contains no subgraph isomorphic to $H$ is
\begin{equation}\label{perfect2}
2^{\left(1-\frac1{\chi(H)-1}+o(1)\right){n\choose 2}}.
\end{equation}

Pr\"omel and Steger~\cite{PrS92a, PrS92b, PrS93} established an analogous theorem for graphs containing no {\em induced subgraph} isomorphic to $H$. Throughout this paper, these graphs will be called {\em $H$-free}. To state their result, Pr\"omel and Steger introduced the following key notion.

\begin{definition}\label{coloringnumber}
A graph $G$ is {\em $(r,s)$-colorable} for some $0\le s\le r$ if there is a $r$-coloring of the vertex set $V(G)$, in which the first $s$ color classes are {cliques} and the remaining $r-s$ color classes are {independent sets}. The {\em coloring number} $\chi_c(\textsc{P})$ of a hereditary graph property $\textsc{P}$ is the largest integer $r$ for which there is an $s$ such that all $(r, s)$-colorable graphs have property $\textsc{P}$. Consequently, for any $0\le s\le \chi_c(\textsc{P})+1$, there exists a $(\chi_c(\textsc{P})+1,s)$-colorable graph that does not have property $\textsc{P}$.
\end{definition}

The work of Pr\"omel and Steger was completed by Alekseev~\cite{Al93} and by Bollob\'as and Thomason~\cite{BoT95, BoT97}, who proved that the speed of any hereditary graph property $\textsc{P}$ satisfies
\begin{equation}\label{alexeev}
|\textsc{P}_n|=2^{\left(1-\frac{1}{\chi_c({\rm P})}+o(1)\right){n\choose 2}}.
\end{equation}

The lower bound follows from the observation that for $\chi_c({\textsc P})=r$, there exists $s\le r$ such that all $(r,s)$-colorable graphs have property $\textsc{P}$. In particular, $\textsc{P}_n$ contains all graphs whose vertex sets can be partitioned into $s$ cliques and $r-s$ independent sets, and the number of such graphs is equal to the right-hand side of (\ref{alexeev}).

As for string graphs, Pach and T\'oth~\cite{PaT06} proved that
\begin{equation}\label{r=4}
\chi_c(\textsc{String})=4.
\end{equation}
Hence, (\ref{alexeev}) immediately implies
\begin{equation}\label{pachtoth}
          |\textsc{String}_n|=2^{\left(\frac34+o(1)\right){n\choose 2}}.
\end{equation}

If we want to tighten the above estimates, another idea of Pr\"omel and Steger~\cite{PrS91} is instructive. They noticed that the vertex set of almost every $C_4$-free graph can be partitioned into a clique and an independent set, and no matter how we choose the edges between these two parts, we always obtain a $C_4$-free graph. Therefore, the speed of $C_4$-freeness is at most $(1+o(1))2^n2^{\frac{1}{2}{n \choose 2}}$, which is much better than the general bound $2^{\left(\frac12+o(1)\right){n\choose 2}}$ that follows from (\ref{alexeev}). Almost all $C_5$-free graphs permit similar {\em ``certifying partitions''}. It is an interesting open problem to decide which hereditary families permit such partitions and what can be said about the inner structure of the subgraphs induced by the parts.
This line of research was continued by Balogh, Bollob\'as, and Simonovits~\cite{BaBS04, BaBS09, BaBS11}. The strongest result in this direction was proved by Alon, Balogh, Bollob\'as, and Morris~\cite{AlBBM11}, who proved that for almost every graph with a hereditary property $\textsc{P}$, one can delete a small fraction of the vertices in such a way that the rest can be partitioned into $\chi_c(\textsc{P})$ parts with a very simple inner structure. This allowed them to replace the bound (\ref{alexeev}) by a better one:
$$|\textsc{P}_n|=2^{\left(1-\frac{1}{\chi_c(\rm P)}\right){n\choose 2}+O(n^{2-\epsilon})}.$$
This will be the starting point of our analysis of string graphs.  As we  shall see, in the case of string graphs, our results allow us to replace the $2^{O(n^{2-\epsilon})}$ in this bound by $2^{\frac{9n}{4}+o(n)}$. See \cite{BB11, KKOT15, RY17, ReedScott}, for related results.
\smallskip

We need some notation. Following Alon {\em et al.}, for any integer $k>0$, define $U(k)$ as a bipartite graph with vertex classes $\{1,\ldots,k\}$ and $\{I : I\subset\{1,...,k\}\}$, where a vertex $i$ in the first class is connected to a vertex $I$ in the second if and only if $i\in I$. We think of $U(k)$ as a ``universal'' bipartite graph on $k+2^k$ vertices, because for every subset of the first class there is a vertex in the second class whose neighborhood is precisely this subset.

As usual, the {\em neighborhood} of a vertex $v$ of a graph $G$ is denoted by $N_G(v)$ or, if there is no danger of confusion, simply by $N(v)$. For any disjoint subsets $A, B\subset V(G)$, let $G[A]$ and $G[A,B]$ denote the subgraph of $G$ induced by $A$ and the {\em bipartite} subgraph of $G$ consisting of all edges of $G$ running between $A$ and $B$, respectively. The {\em symmetric difference} of two sets, $X$ and $Y$, is denoted by $X\bigtriangleup Y$.

\begin{definition}\label{U(k)-free}
Let $k$ be a positive integer. A graph $G$ is said to {\em contain $U(k)$} if there are two disjoint subsets $A,B\subset V(G)$ such that the bipartite subgraph $G[A,B]\subseteq G$ induced by them is isomorphic to $U(k)$. Otherwise, with a slight abuse of terminology, we say that $G$ is $U(k)$-free.
\end{definition}

By slightly modifying the proof of the main result (Theorem 1) in \cite{AlBBM11} and adapting it to string graphs, we obtain

\begin{theorem}\label{abbm.cor}
For any sufficiently large positive integer $k$ and for any $\delta>0$ which is sufficiently small in terms of $k$, there exist $\epsilon>0$ and a positive integer $b$ with the following properties.

The vertex set $V_n\; (|V_n|=n)$ of almost every string graph $G$ can be partitioned into eight sets, $S_1,...S_4,A_1,....,A_4$, and a set $B$ of at most $b$ vertices such that
\begin{enumerate}
\item[(a)] $G[S_i]$  is $U(k)$-free for every $i\; (1\le i\le 4)$;
\item[(b)] $|A_1 \cup A_2 ... \cup A_4 | \le n^{1-\epsilon}$; and
\item[(c)]for every $i\; (1\le i\le 4)$ and  $v\in S_i \cup A_i$ there is $a\in B$ such that $$|(N(v)\bigtriangleup N(a)) \cap (S_i \cup A_i)|\le \delta n.$$
\end{enumerate}
\end{theorem}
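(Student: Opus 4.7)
The plan is to follow the framework of Alon, Balogh, Bollob\'as, and Morris~\cite{AlBBM11}, specialised to the hereditary property $\textsc{String}$. Two inputs from this section drive the argument: the Pach--T\'oth bound $|\textsc{String}_n| = 2^{(3/4+o(1))\binom{n}{2}}$ from~\eqref{pachtoth}, which serves as the denominator against which the exceptional class is shown to be negligible, and the equality $\chi_c(\textsc{String}) = 4$ from~\eqref{r=4}, which fixes the target number of parts in the partition.

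First I would sample a seed set $B \subseteq V_n$ of constant size $b = b(k,\delta)$ and, for each $v \notin B$, record its trace $N(v) \cap B$. Vertices are clustered into at most four classes $S_1,\ldots,S_4$ via a $\delta|B|$-symmetric-difference equivalence, each class represented by some $a_i \in B$; this bakes condition~(c) into the construction. $U(k)$-freeness of each $G[S_i]$ is then imposed by a refinement step: whenever some $G[S_i]$ still contains a copy of $U(k)$, the $k$ vertices of its ``first class'' are absorbed into $B$, producing a strictly finer partition. Since $|B|$ stays bounded by a function of $k$ and $\delta$, this process terminates.

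The core counting step compares, for a fixed candidate partition, the number of string graphs compatible with it to~\eqref{pachtoth}. The edge set of $G$ splits into three contributions: (i) edges inside each $U(k)$-free piece $G[S_i]$, bounded by a Sauer--Shelah-type estimate by $2^{o(|S_i|^2)}$ choices; (ii) cross-edges between the four main parts, at most $2^{\binom{n}{2} - \sum_i \binom{|S_i|}{2}} \le 2^{(3/4)\binom{n}{2}}$ with near equality only when the $|S_i|$ are almost equal; and (iii) edges incident to $B$ or to $A_1 \cup \cdots \cup A_4$, contributing at most $2^{O((b+|A|)n)}$. Summed over the at most $4^n \cdot 2^{O(b \log n)}$ candidate partitions, the string graphs that admit no such partition must occupy an exponentially smaller share of $\textsc{String}_n$ than~\eqref{pachtoth} allows, so they form a negligible subfamily.

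The principal obstacle is upgrading the general ABBM conclusion from an exceptional set of size $o(n)$ to the quantitatively sharper bound $|A_1 \cup \cdots \cup A_4| \le n^{1-\epsilon}$. This demands showing that more than $n^{1-\epsilon}$ exceptional vertices would inject at least $\Omega(n^{2-\epsilon'})$ extra bits of entropy for some $\epsilon' < \epsilon$, overwhelming the slack available under~\eqref{pachtoth}. An iterative ``moving vertices'' argument, in which one alternately enlarges $B$ and reassigns outliers whose traces almost fit some $S_i \cup A_i$, secures the required $\epsilon$ and $b$. Once this quantitative gain is in place, conditions (a), (b), and (c) follow directly from the construction.
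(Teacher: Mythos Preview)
Your plan diverges from the paper in a basic way. The paper's proof of this theorem does not attempt an independent argument at all: it literally follows the proof of Theorem~1 in~\cite{AlBBM11} line by line, inserting a slight strengthening of their Lemma~23 so that the maximal ``bad set'' $B$ has size bounded by the constant $c(\alpha,\textsc{P})$ of their Lemma~18 rather than by $n^{1-2\alpha}$. The partition $S'_1,\ldots,S'_4$ and exceptional set $A$ come out of ABBM's $\alpha$-adjustment machinery (built on Szemer\'edi regularity), and one simply sets $A_i = S'_i \cap A$, $S_i = S'_i \setminus A$. Conditions (a), (b), (c) are then read off directly from the conclusions of ABBM's Theorem~1. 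In particular, ABBM already delivers the bound $|A| \le n^{1-\epsilon}$; there is no ``upgrade from $o(n)$'' to perform, so what you call the principal obstacle is not an obstacle.

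Your alternative mechanism --- sample a constant-size seed $B$, cluster vertices by their trace $N(v)\cap B$ up to $\delta|B|$-symmetric difference, then absorb $U(k)$ witnesses into $B$ --- has real gaps. First, nothing in the construction forces the number of trace-classes to be at most four; there can be up to $2^{|B|}$ classes, and the fact $\chi_c(\textsc{String})=4$ only enters through a counting comparison you have not carried out at that stage. Second, agreement of traces on the \emph{constant-size} set $B$ does not imply that $|(N(v)\bigtriangleup N(a))\cap(S_i\cup A_i)|\le \delta n$, which is a statement about agreement on a \emph{linear-size} set; condition~(c) is not ``baked in'' by your clustering. Third, the refinement that absorbs the $k$ first-class vertices of a $U(k)$ copy into $B$ has no termination bound: each round grows $|B|$ by $k$, reshuffles all traces, and may well create new $U(k)$ copies, so the assertion that $|B|$ stays bounded by a function of $k$ and $\delta$ is exactly what needs proof. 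The ABBM argument avoids all of this by obtaining the partition and the $U(k)$-freeness from regularity and a careful adjustment step, and by defining $B$ as a \emph{maximal} $2\alpha$-bad set whose size is bounded via their Lemma~18/23; your sketch would need to reproduce that machinery rather than replace it.
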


In other words, for the right choice of parameters, almost all string graphs have a partition into $4$ parts satisfying the following conditions. There is a set of sub-linear size in the number of vertices such that deleting its elements, the subgraphs induced by the parts are U(k)-free. Moreover, there is another set $B$ of at most constantly many vertices such that the neighborhood of every vertex with respect to the part it belongs to is similar to the neighbourhood of some vertex in $B$. 
In Appendix~\ref{sketch}, we sketch the proof of this result, indicating the places where we slightly deviate from the original argument in~\cite{AlBBM11}.

\section{String graphs vs. intersection graphs of convex sets--Proof of Theorem~\ref{canonical}}\label{section3}

Instead of proving Theorem~\ref{canonical}, we establish a somewhat more general result.

\begin{theorem}\label{planar}
Given a planar graph $H$ with labeled vertices $\{1,\ldots,k\}$ and positive integers $n_1,\ldots,n_k$, let $\textsc{H}(n_1,\ldots,n_k)$ denote the class of all graphs with $n_1+\ldots+n_k$ vertices  that can be obtained from $H$ by replacing every vertex $i\in V(H)$ with a clique of size $n_i$, and adding any number of further edges between pairs of cliques that correspond to pairs of vertices $i\neq j$ with $ij\in E(G)$.

Then every element of $\textsc{H}(n_1,\ldots,n_k)$ is the intersection graph of a family of plane convex sets.
\end{theorem}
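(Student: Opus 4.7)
The plan is to build the convex-set representation in three layers, using the planarity of $H$ throughout. First, apply Koebe's circle packing theorem (or F\'ary's theorem) to obtain closed disks $D_1,\ldots,D_k$ with pairwise disjoint interiors whose tangency graph is $H$, and slightly enlarge each to an open disk $D'_i$ so that $D'_i \cap D'_j$ is a small convex lens $L_{ij}$ precisely when $ij \in E(H)$ and empty otherwise. Every convex set $C_v$ associated with a vertex $v$ in the clique of $i$ will be a convex subset of $D'_i$; this automatically forces $C_v \cap C_w = \emptyset$ whenever $v,w$ are in cliques corresponding to non-adjacent vertices of $H$, since $D'_i \cap D'_j = \emptyset$.

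Inside each $D'_i$, fix a point $p_i$ lying outside every lens $L_{ij}$. Every $C_v$ for $v$ in the clique of $i$ will contain $p_i$, so any two such sets in the same clique meet at $p_i$; this realizes all intra-clique edges at once. For the inter-clique edges, define $C_v$ to be the convex hull of $\{p_i\}$ together with ``meeting points'' $P_{vw}$ placed in the appropriate lens $L_{ij}$, one for each edge $vw$ of the blown-up graph $G$ with $w$ in the clique of an $H$-neighbour $j$ of $i$. Because $D'_i$ is convex and every meeting point used by $C_v$ lies inside $D'_i$, the convex hull $C_v$ stays inside $D'_i$, preserving the separation from all non-adjacent cliques.

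The main obstacle, and the genuinely nontrivial step, is the local bipartite realization inside each lens $L_{ij}$: the meeting points must be placed so that $C_v \cap C_w$ is nonempty exactly when $vw \in E(G)$. The ``if'' direction is immediate since $P_{vw}$ then lies in both sets; the delicate ``only if'' requires that the convex hull of $v$'s meeting points and the convex hull of $w$'s meeting points be disjoint inside $L_{ij}$ whenever $vw \notin E(G)$. I would approach this by carefully choosing positions for the meeting points: place them close to a convex arc $\Gamma_{ij} \subset L_{ij}$ in a particular order and use the two-dimensional freedom of $L_{ij}$ to perturb them into clusters, one per vertex $w$ of the clique of $j$, so that each cluster can only be reached by the $C_w$ corresponding to that cluster and by the $C_v$'s with $vw \in E(G)$. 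The existence of such a layout for an arbitrary prescribed bipartite pattern is the technical heart of the argument and is already where the proof of the base case $|V(H)|=2$ sits.

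The proof concludes by checking each adjacency: intra-clique edges via the common point $p_i$, inter-clique edges along an edge of $H$ via the shared meeting points $P_{vw}$, non-edges between cliques at non-adjacent vertices of $H$ via disk-disjointness, and non-edges between adjacent cliques via the careful local arrangement inside the corresponding lens.
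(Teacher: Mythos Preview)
Your high-level framework coincides with the paper's: Koebe's circle packing for $H$, each $C_v$ a convex hull anchored at a central point of its disk (forcing the intra-clique intersections), and ``meeting points'' near each tangency to encode the bipartite pattern between adjacent cliques. Where you diverge from a complete proof is exactly the step you yourself label ``the technical heart'': you do not actually construct the local arrangement, you only promise to ``carefully choose positions'' and ``perturb into clusters''.

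The paper fills this gap with a concrete and asymmetric device. For $ij\in E(H)$ with $i<j$, it places $2^{n_i}$ points $p_{ij}(A)$, one for each subset $A\subseteq\{1,\dots,n_i\}$, on a short arc $\gamma_{ij}$ of $\partial D_i$ centred at the tangency. The rule is $p_{ij}(A)\in P_{im}$ iff $m\in A$, while $P_{jM}$ receives the \emph{single} point $p_{ij}(A)$ with $A=N(v_{jM})\cap\{v_{i1},\dots,v_{in_i}\}$. Because each $C_{jM}$ touches $\gamma_{ij}$ at exactly one point $p$, a tangent-line argument works: the two sides of the polygon $C_{jM}$ meeting at $p$ head toward $o_j$ or toward another tangency of $D_j$, and with $\delta\ll\varepsilon$ they stay on the $o_j$-side of the tangent line to $\gamma_{ij}$ at $p$; every other point of $\gamma_{ij}$ lies on the opposite side, and $C_{im}\subseteq D_i$ does as well. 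Thus $C_{im}\cap C_{jM}$ can only be the shared point, if any.

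Your scheme of one meeting point $P_{vw}$ per edge, clustered by $w$, is genuinely different and is not verified. The obvious hazard: if $v$ is adjacent to $w_1,w_3$ but not $w_2$, with clusters in order $w_1,w_2,w_3$, the chord in $C_v$ from $P_{vw_1}$ to $P_{vw_3}$ may sweep past the $w_2$-cluster. This can probably be salvaged by putting every meeting point exactly on a strictly convex arc so hulls stay on the concave side, but you have not argued it. You also do not address the global issue---why the convex hull of $p_i$ together with meeting points from several different lenses cannot stray into an unrelated lens $L_{ij}$. The paper handles this by the explicit choice $\delta<\varepsilon^2/100$, comparing arc length to the minimal angular separation between tangencies; your proposal has no analogue.

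A minor point: F\'ary's theorem gives a straight-line planar drawing, not tangent disks, so it is not interchangeable with Koebe for this construction.
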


\begin{proof}
Fix any graph  $G\in \textsc{H}(n_1,\ldots,n_k)$. The vertices of $H$ can be represented by closed disks $D_1,\ldots, D_k$ with disjoint interiors such that $D_i$ and $D_j$ are tangent to each other for some $i<j$ if and only if $ij\in E(H)$ (Koebe, \cite{Ko36}). In this case, let $t_{ij}=t_{ji}$ denote the point at which $D_i$ and $D_j$ touch each other. For any $i\; (1\le i\le k)$, let $o_i$ be the center of $D_i$. Assume without loss of generality that the radius of every disk $D_i$ is at least $1$.
\smallskip

$G$ has $n_1+\ldots+n_k$ vertices denoted by $v_{im}$, where $1\le i\le k$ and $1\le m\le n_i$. In what follows, we assign to each vertex $v_{im}\in V(G)$ a finite set of points $P_{im}$, and define $C_{im}$ to be the convex hull of $P_{im}$. For every $i, 1\le i\le k,$ we include $o_i$ in all sets $P_{im}$ with $1\le m\le n_i$, to make sure that for each $i$, all sets $C_{im}, 1\le m\le n_i$ have a point in common, therefore, the vertices that correspond to these sets induce a clique.
\smallskip

Let $\varepsilon<1$ be the {\em minimum} of all angles $\measuredangle t_{ij}o_it_{il}>0$ at which the arc between two consecutive touching points $t_{ij}$ and $t_{il}$ on the boundary of the same disc $D_i$ can be seen from its center, over all $i, 1\le i\le k$ and over all $j$ and $l$. Fix a small $\delta>0$ satisfying $\delta<\varepsilon^2/100$.
\smallskip

For every $i<j$ with $ij\in E(H)$, let $\gamma_{ij}$ be a circular arc of length $\delta$ on the boundary of $D_i$, centered at the point $t_{ij}\in D_i\cap D_j$. We select $2^{n_i}$ distinct points $p_{ij}(A)\in\gamma_{ij}$, each representing a different subset $A\subseteq\{1,\ldots, n_i\}$. A point $p_{ij}(A)$ will belong to the set $P_{im}$ if and only if $m\in A$. (Warning: Note that the roles of $i$ and $j$ are not interchangeable!)

If for some $i<j$ with $ij\in E(H)$, the intersection of the neighborhood of a vertex $v_{jM}\in V(G)$ for any $1\le M \le n_j$ with the set $\{v_{im} : 1\le m\le n_i\}$ is equal to $\{v_{im} : m\in A\}$, then we include the point $p_{ij}(A)$ in the set $P_{jM}$ assigned to $v_{jM}$, see Figure \ref{Figure2} for a sketch. Hence, for every $m\le n_i$ and $M\le n_j$, we have
$$v_{im}v_{jM}\in E(G)\;\;\;\Longleftrightarrow\;\;\;P_{im}\cap P_{jM}\neq\emptyset.$$
In other words, the intersection graph of the sets assigned to the vertices of $G$ is isomorphic to $G$.
\smallskip
\smallskip

\begin{figure}
    \centering
                \includegraphics[width=0.55\textwidth]{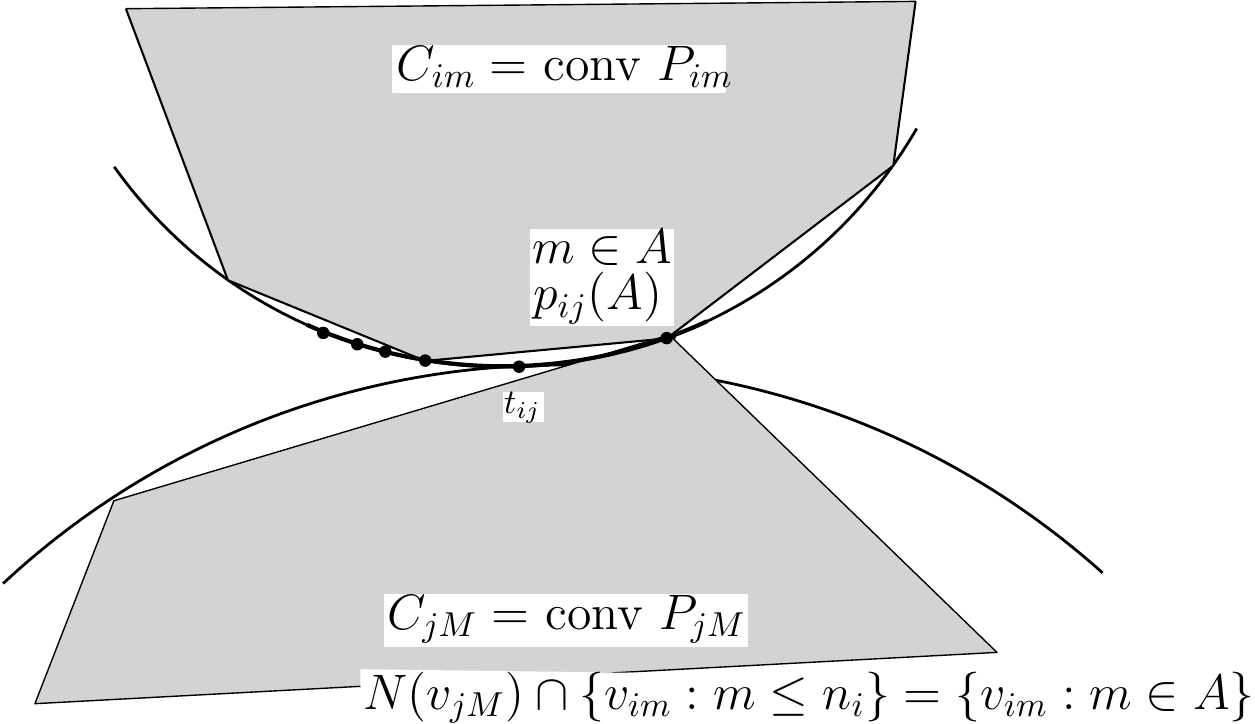}
                \caption{The point $p_{ij}(A)$ is included in $P_{jM}$. \label{Figure2}}
\end{figure}

It remains to verify that
$$v_{im}v_{jM}\in E(G)\;\;\;\Longleftrightarrow\;\;\;C_{im}\cap C_{jM}\neq\emptyset.$$
Suppose that the intersection graph of the set of convex polygonal regions
$$\{C_{im} : 1\le i\le k\; {\rm and}\; 1\le m\le n_i\}$$
differs from the intersection graph of
$$\{P_{im} : 1\le i\le k\; {\rm and}\; 1\le m\le n_i\}.$$

Assume first, for contradiction, that there exist $i, m, j, M$ with $i<j$ such that $D_i$ and $D_j$ are tangent to each other and $C_{jM}$ contains a point $p_{ij}(B)$ for which
\begin{equation}\label{badpoint}
B\not=N_{jM}\cap\{v_{im} : 1\le m\le n_i\}.
\end{equation}
Consider the unique point $p=p_{ij}(A)\in\gamma_{ij}$ that belongs to $P_{jM}$, that is, we have
$$A=N_{jM}\cap\{v_{im} : 1\le m\le n_i\}.$$
Draw a tangent line $\ell$ to the arc $\gamma_{ij}$ at point $p$. See Figure \ref{Figure1}. The polygon $C_{jM}$ has two sides meeting at $p$; denote the infinite rays emanating from $p$ and containing these sides by $r_1$ and $r_2$. These rays either pass through $o_j$ or intersect the boundary of $D_j$ in a small neighborhood of the point of tangency of $D_j$ with some other disk $D_{j'}$. Since $\delta$ was chosen to be much smaller than $\varepsilon$, we conclude that $r_1$ and $r_2$ lie entirely on the same side of $\ell$ where $o_j$, the center of $D_j$, is. On the other hand, all other points of $\gamma_{ij}$, including the point $p_{ij}(B)$ satisfying (\ref{badpoint}) lie on the opposite side of $\ell$, which is a contradiction.

\begin{figure}
    \centering
                \includegraphics[width=0.55\textwidth]{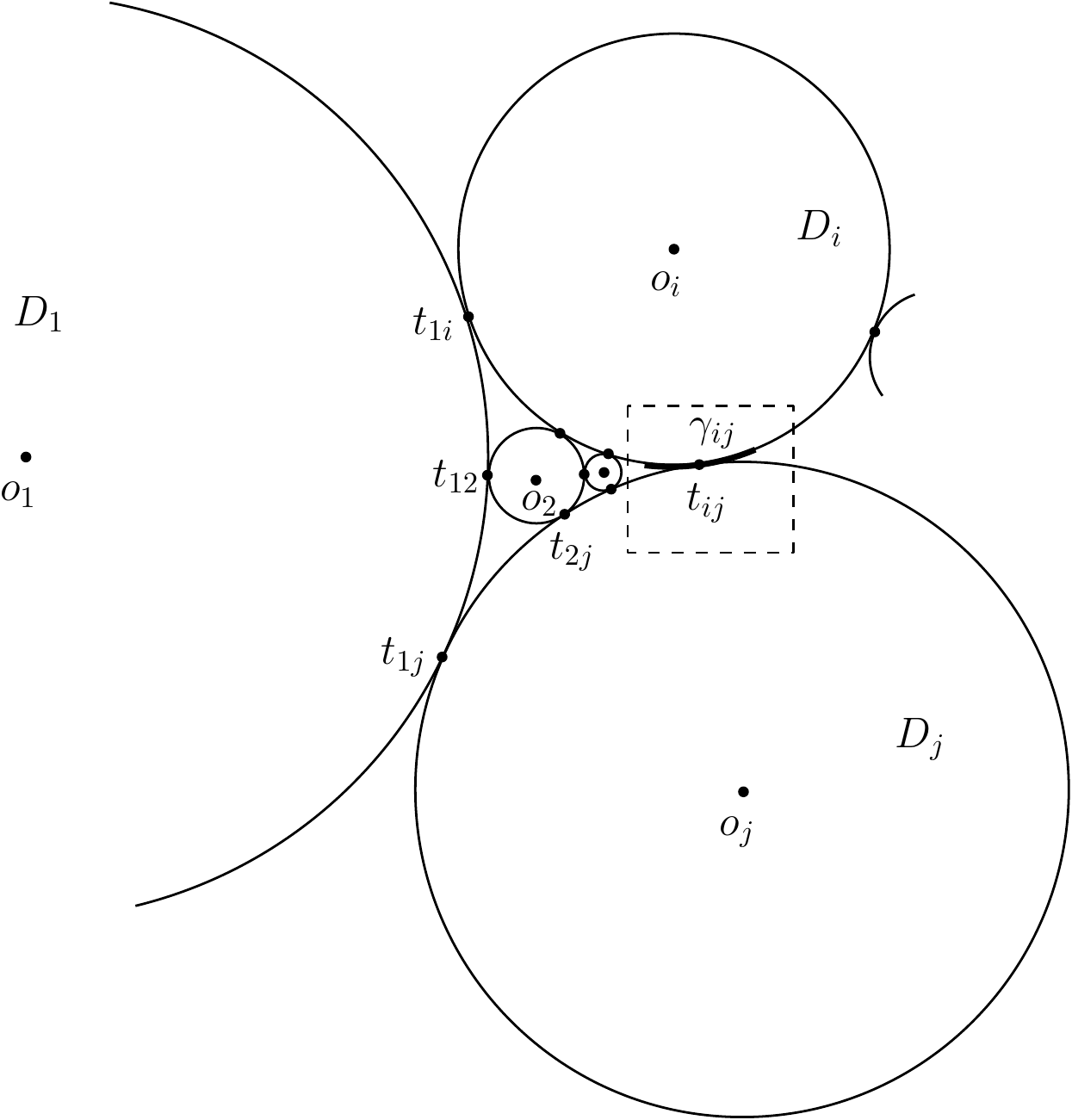}
                \caption{Tangent disks $D_i$ and $D_j$ touching at $t_{ij}$.\label{Figure1}}
\end{figure}

Essentially the same argument and a little trigonometric computation show that for every $j$ and $M$, the set $C_{jM}\setminus D_j$ is covered by the union of some small neighborhoods (of radius $<\varepsilon/10$) of the touching points $t_{ij}$ between $D_j$ and the other disks $D_i$. This, together with the assumption that the radius of every disk $D_i$ is at least $1$ (and, hence, is much larger than $\varepsilon$ and $\delta$) implies that $C_{jM}$ cannot intersect any polygon $C_{im}$ with $i\not= j$, for which $D_i$ and $D_j$ are not tangent to each other.
\end{proof}

\smallskip

Applying Theorem~\ref{planar} to the graph obtained from $K_5$ by deleting one of its edges, Theorem~\ref{canonical} follows.

%
%
%
%
%
%
%
%
%
%

\section{Strengthening Theorem~\ref{abbm.cor}}
\label{firstproof}

In this section, we strengthen Theorem~\ref{abbm.cor} in two different ways. To avoid confusion, in the formulation of our new theorem, we use $X_i$ in place  of $S_i$ and $Z_i$ in place of $A_i$. We will see that we can insist that the four parts of the partition have approximately the same size. Secondly, we can guarantee that $X_1$, $X_2$, and $X_3$ are cliques and $X_4$ induces the disjoint union of two cliques. More precisely, setting $Z=Z_1 \cup Z_2 ... \cup Z_4$, we prove the following result, which is similar in flavour to a result in \cite{ReedScott}.

\begin{theorem}
\label{Hstruc.thm}
For every sufficiently small $\delta$, there are  $\gamma>0, b>4+\frac{2}{\delta}$ with the following property. For almost every string graph $G$ on $V_n$, there is a partition of $V_n$ into $X_1,...,X_4, Z_1,...,Z_4$ such that for some set $B$ of at most $b$ vertices the following conditions are satisfied:
\begin{enumerate}
\item[(I)] $G[X_1]$, $G[X_2]$, and $G[X_3]$ are cliques and $G[X_4]$ induces the disjoint union of  two cliques.
\item[(II)] $| Z_1 \cup Z_2 \cup Z_3 \cup Z_4| \le n^{1-\gamma}$, 
\item[(III)] for every $i\; (1\le i\le 4)$ and every $v\in X_i \cup Z_i$, there exists $a\in B$ such that $$|(N(v)\bigtriangleup N(a)) \cap (X_i \cup Z_i)|\le \delta n,$$
\item[(IV)] for every $i\; (1\le i\le 4)$, we have $\bigl\lvert |Z_i \cup X_i|-\frac{n}{4}\bigr\rvert \le n^{1-\gamma}$.
\end{enumerate}
\end{theorem}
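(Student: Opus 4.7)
The plan is to start from the partition $S_1,\ldots,S_4,A_1,\ldots,A_4,B$ provided by Theorem~\ref{abbm.cor} and refine it in two stages: first balance the part sizes, then enforce a canonical inner structure on each part by moving exceptional vertices into reservoir sets $Z_i$ (and, if needed, augmenting $B$ by a bounded number of auxiliary vertices).

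\textbf{Balancing.} First I would establish that, for almost every string graph, the Theorem~\ref{abbm.cor} partition already satisfies $\bigl||S_i\cup A_i|-n/4\bigr|\le n^{1-\gamma}$, giving (IV). This is a counting argument. By conditions (a) and (c), the induced subgraph $G[S_i]$ is, up to lower-order corrections, a ``quasi-blowup'' of a graph on at most $|B|=b$ vertices, so the internal structure of the four parts contributes only $2^{O(n\log n)}$ choices. The choice of edges across pairs of parts contributes at most $2^{\sum_{i<j}n_in_j}$, and the multinomial counting partitions of $V_n$ into parts of sizes $n_1,\ldots,n_4$ contributes $2^{2n-\Theta(\sum_i(n_i-n/4)^2/n)}$. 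The quantity $\sum_{i<j}n_in_j=\tfrac12(n^2-\sum_i n_i^2)$ is uniquely maximised, with value $3n^2/8$, at $n_1=\cdots=n_4=n/4$, and any deviation $|n_i-n/4|\ge n^{1-\gamma}$ costs $\Omega(n^{2-2\gamma})$ in that exponent. Combined with the lower bound $|\textsc{String}_n|\ge 2^{3n^2/8-O(n)}$ supplied by the canonical constructions of Theorem~\ref{canonical}, this shows (for any $\gamma<1/2$) that graphs with unbalanced partitions form a negligible fraction.

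\textbf{Canonical inner structure.} Fix $i$, and partition $S_i\cup A_i$ into at most $2^b$ \emph{type classes}, grouping vertices according to which $a\in B$ witnesses condition (c) and whether that witness lies in $N(a)$. Two vertices of the same type class have neighbourhoods (within $S_i\cup A_i$) pairwise differing by at most $2\delta n$. A standard cleaning step -- moving at most $O(\sqrt{\delta}\,n)$ vertices of $S_i$ into $Z_i$ -- then forces each type class to induce either a clique or an independent set and each pair of distinct type classes to be joined by either all or no edges, so $G[X_i]$ becomes a genuine blowup of a graph $H_i$ on $m_i\le 2^b$ vertices (each carrying a ``clique'' or ``independent'' label), while condition (III) is preserved. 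It remains to argue that almost always each $H_i$ is either a single ``clique'' vertex or a pair of ``clique'' vertices with no edge, and that exactly three of the four $H_i$'s are of the first kind. I would do this by enumerating string graphs by their blowup template $(H_1,\ldots,H_4)$. The number of labelled graphs with a given template, balanced part sizes, and arbitrary edges between parts is at most
\[
\binom{n}{n/4,\ldots,n/4}\cdot\prod_{i=1}^{4}(n/4)^{m_i}\cdot 2^{3n^2/8},
\]
and by Theorem~\ref{canonical} this bound is essentially achieved when the template is canonical. For every non-canonical template, on the other hand, I would argue that a positive-density collection of between-part edge patterns must be forbidden -- otherwise one could embed inside $G$ a non-string-graph configuration of Kratochv\'il--Matou\v sek type~\cite{KrMa91} -- which entails a loss of order $\Omega(n^2)$ in the exponent and therefore drowns such graphs. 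A final comparison inside the canonical class shows that templates with three singleton-clique $H_i$'s and one two-vertex $H_i$ beat the pure four-singletons template by the entropy factor $\binom{n/4}{n/8}=2^{n/4-O(\log n)}$ coming from the two-clique split inside the distinguished part, so almost every canonical graph has precisely the structure required in (I).

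\textbf{Main obstacle.} The delicate step is the templatewise count: I must convert each non-canonical feature of $(H_1,\ldots,H_4)$ into a quantitative reduction in the number of admissible between-part edge patterns, not merely an existential statement that some pattern is forbidden. Making this quantitative via the non-realisability phenomenon underlying Theorem~\ref{distinction}, uniformly over the $O(1)$ many templates of bounded size, is the technical heart of the proof and the place where the string-graph hypothesis is used in an essential way.
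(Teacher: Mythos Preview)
Your balancing argument for (IV) is essentially the paper's Lemma~\ref{deviationlema}, so that step is fine.

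The main gap is in your cleaning step and the size of $Z_i$ it produces. You propose to turn each $G[X_i]$ into an honest blowup by moving $O(\sqrt{\delta}\,n)$ vertices into $Z_i$. But that is \emph{linear} in $n$, while condition~(II) demands $|Z_1\cup\cdots\cup Z_4|\le n^{1-\gamma}$, a genuinely sublinear bound. No amount of template analysis afterwards recovers this: once you have thrown a linear number of vertices into the reservoir, you cannot pull them back out without re-establishing structure on them individually. The paper never attempts a blanket ``make it a blowup'' cleaning for precisely this reason. Instead it shows, by counting, that for almost every string graph the number of small \emph{obstructions} inside each $S_i$ (independent triples, induced $P_3$'s, and vertex-plus-triangle quadruples) is at most $l=\lceil n^{1-\epsilon/7}\rceil$; removing a maximal disjoint family of such obstructions then leaves each $X_i$ a clique or a disjoint union of two cliques, and costs only $O(l)$ vertices per part, which is sublinear.

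The second problem is the tool you invoke for the ``templatewise count''. The Kratochv\'\i l--Matou\v sek construction underlying Theorem~\ref{distinction} is about string graphs that require exponential-complexity realisations; it gives no usable supply of small non-string graphs with prescribed partition structure. What the paper actually uses is Lemma~\ref{nonstring} (the $K_5$-subdivision obstruction) and its Corollary~\ref{thecor}, which furnish explicit non-string graphs on at most $25$ vertices whose vertex sets split into prescribed combinations of small cliques, stable sets, $P_3$'s, and vertex-plus-triangle pieces. These are then planted across the $S_i$ (Lemmas~\ref{firststruclema}--\ref{fourthstruclema}): if two parts each contain $l$ disjoint stable $10$-sets, or if some part contains $l$ disjoint $P_3$'s while the others contain $l$ disjoint $K_5$'s, etc., then for each of $\Omega(l^2)$ disjoint cross-pairs at least one of the $2^{O(1)}$ bipartite patterns is forbidden, cutting the number of admissible edge configurations by a factor $(1-2^{-O(1)})^{l^2}$. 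Since $l^2=n^{2-2\epsilon/7}$ dominates the $2^{O(n^{2-\epsilon})}$ overcount for partitions and $U(k)$-free fillings, these cases are negligible. Your proposal identifies the need for such a reduction but points to the wrong source of forbidden configurations; the specific partition-compatible non-string graphs of Corollary~\ref{thecor} are what make the argument go through.
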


See Figure \ref{Figure4} for an illustration of Theorem \ref{Hstruc.thm}.

\begin{figure}
    \centering
                \includegraphics[width=0.8\textwidth]{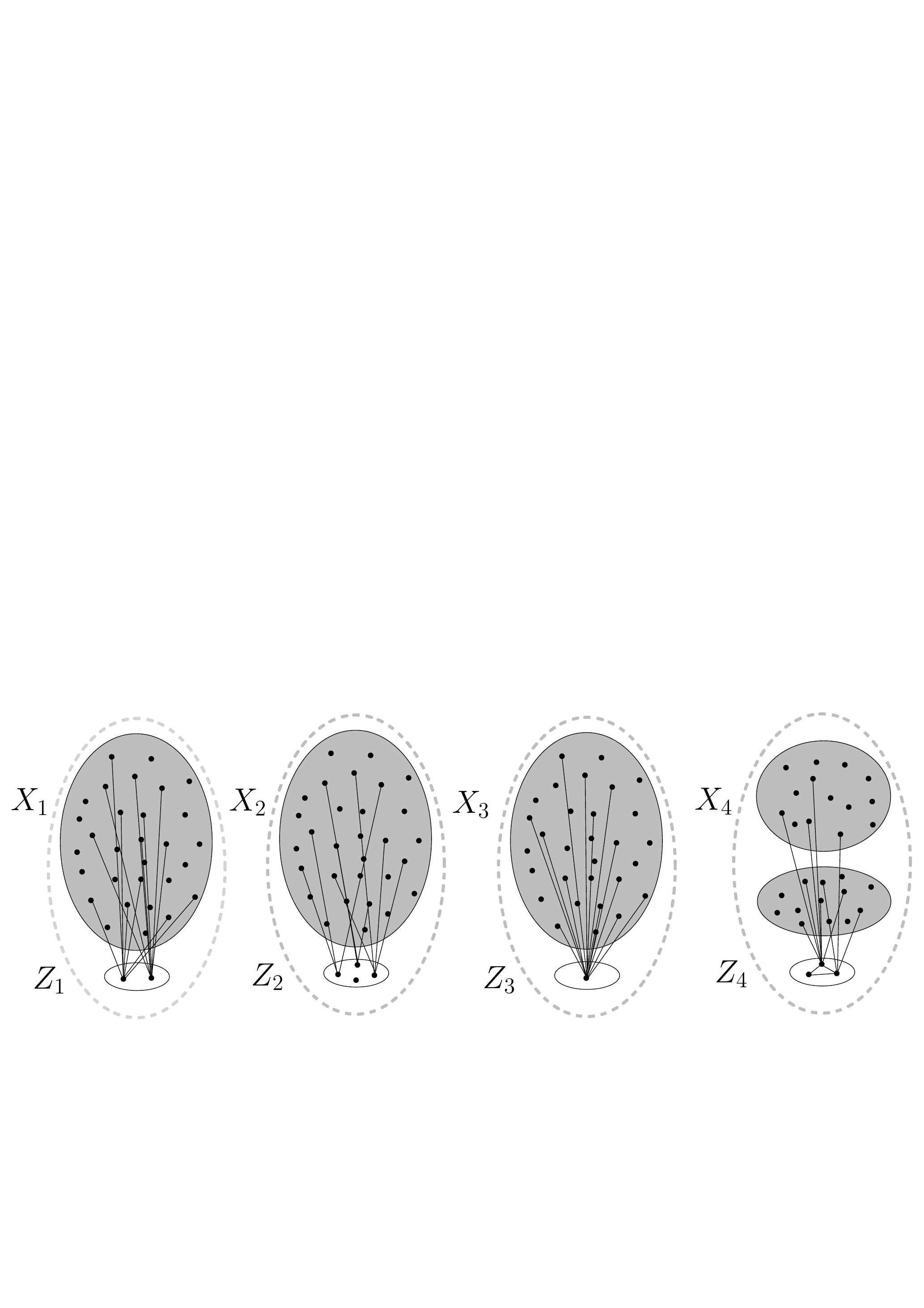}
                \caption{A sketch of a typical string graph as in Theorem \ref{Hstruc.thm}. The edges between the parts are not drawn. The sets shaded grey are cliques.\label{Figure4}}
\end{figure}

For the proof of Theorem \ref{Hstruc.thm} we need the following statement which is a slight generalization of Lemma~3.2 in~\cite{PaT06}, and it can be established in precisely the same way,
details are given in the appendix.

\begin{lemma}\label{nonstring}
Let $H$ be a graph on the vertex set $\{v_1,\ldots,v_5\}\cup\{v_{ij} : 1\le i\not= j\le 5\}$, where $v_{ij}=v_{ji}$ and every $v_{ij}$ is connected by an edge to $v_i$ and $v_j$. The graph $H$ may have some further edges connecting pairs of vertices $(v_{ij},v_{ik})$ with $j\not=k$.
Then $H$ is not a string graph.
\end{lemma}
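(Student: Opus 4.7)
The plan is to derive a contradiction by extracting from a putative string representation of $H$ a drawing of $K_5$ in the plane in which independent edges do not cross, and then invoking the (weak) Hanani--Tutte theorem (a graph that can be drawn in the plane so that every two independent edges cross an even number of times is planar).

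Suppose for contradiction that $H$ is realized by a family of curves $\{C_w : w \in V(H)\}$, perturbed to be in general position. The prescribed (non)edges of $H$ yield three key topological facts: (i) the arcs $C_{v_1}, \ldots, C_{v_5}$ are pairwise disjoint; (ii) each $C_{v_{ij}}$ meets both $C_{v_i}$ and $C_{v_j}$ but is disjoint from $C_{v_k}$ for every $k \notin \{i,j\}$; (iii) $C_{v_{ij}}$ and $C_{v_{kl}}$ are disjoint whenever $\{i,j\} \cap \{k,l\} = \emptyset$. The optional edges of the form $v_{ij}v_{ik}$ may only force intersections between curves sharing a common index, and these will not disturb the argument.

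Next, I simultaneously contract each arc $C_{v_i}$ to a single point $\bar{v}_i$. Since a compact arc is cellular in $\mathbb{R}^2$, the resulting quotient is again homeomorphic to $\mathbb{R}^2$, and the five points $\bar{v}_1, \ldots, \bar{v}_5$ are distinct. In this quotient, the image of each $C_{v_{ij}}$ is a continuous image of $[0,1]$ containing $\bar{v}_i$ and $\bar{v}_j$ and, by (ii), avoiding $\bar{v}_k$ for all $k \notin \{i,j\}$. Using the standard fact that a continuous image of an interval in a Hausdorff space contains a simple arc joining any two of its points, I extract a simple arc $\alpha_{ij}$ from $\bar{v}_i$ to $\bar{v}_j$ inside this image. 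By (iii), $\alpha_{ij}$ and $\alpha_{kl}$ are disjoint whenever $\{i,j\} \cap \{k,l\} = \emptyset$.

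Thus the five points $\bar{v}_1, \ldots, \bar{v}_5$ together with the ten arcs $\{\alpha_{ij}\}$ form a topological drawing of $K_5$ in the plane in which every pair of independent edges crosses an even (in fact, zero) number of times. The weak Hanani--Tutte theorem then implies that $K_5$ is planar, contradicting Kuratowski's theorem. Hence $H$ cannot be a string graph. The main technical delicacy is the contraction step---verifying that quotienting $\mathbb{R}^2$ by finitely many pairwise disjoint compact arcs produces a space homeomorphic to $\mathbb{R}^2$ (a consequence of R.\ L.\ Moore's decomposition theorem), and checking that the $\alpha_{ij}$ indeed miss the other $\bar{v}_k$. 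In the subcase where no optional edges are present, the arcs $\alpha_{ij}$ are already pairwise disjoint at interiors, giving a genuine planar embedding of $K_5$, and the contradiction follows without invoking Hanani--Tutte.
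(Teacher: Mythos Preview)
Your proof is correct and follows essentially the same approach as the paper: contract the five pairwise disjoint curves $C_{v_i}$ to points, use the curves $C_{v_{ij}}$ to extract simple arcs forming a drawing of $K_5$ in which independent edges are disjoint, and invoke Hanani--Tutte. You supply more topological detail (Moore's decomposition theorem, the arc-extraction step) than the paper's terse sketch, but the argument is the same.
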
\label{non-string}

\begin{corollary}\label{thecor}
For each of the following types of partition, there exists a non-string graph whose vertex set
can be partitioned in the specified way:

(a) $2$ stable (that is, independent) sets each of size at most 10;

(b) $4$ cliques each of size at most five and a vertex;

(c) $3$ cliques  each of size at most five and a stable set of size $3$;

(d) $3$ cliques each of size at most five and a path with three vertices;

(e) $2$ cliques both of size at most five  and $2$ graphs that can be obtained as the disjoint union of a point and a clique of size at most $3$.
\end{corollary}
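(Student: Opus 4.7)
The plan is to realise each of the five partition types by exhibiting a concrete member of the family of non-string graphs provided by Lemma~\ref{nonstring}. Recall that such a graph $H$ has vertex set $\{v_1,\ldots,v_5\}\cup\{v_{ij}:1\le i\ne j\le 5\}$ in which the five anchor vertices $v_i$ are mutually non-adjacent, every edge vertex $v_{ij}$ is joined to $v_i$ and $v_j$, and the remaining freedom is to include or omit any of the optional edges $v_{ij}v_{ik}$ sharing a common index. A key observation is that for any fixed $i$, the set $\{v_i\}\cup\{v_{ij}:j\ne i\}$ becomes a clique of size $5$ once we include all four optional edges among the $v_{ij}$ with common index $i$.

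For part~(a), I would take $H$ with no optional edges, whereupon $H$ is bipartite with stable sides $\{v_1,\ldots,v_5\}$ and $\{v_{ij}\}$ of sizes $5$ and $10$. For part~(b), include every optional edge and use the cliques $C_i=\{v_i\}\cup\{v_{ij}:i<j\le 5\}$ for $i=1,2,3,4$ of sizes $5,4,3,2$, with the leftover vertex being $v_5$. For part~(c), again keeping all optional edges, use the stable set $\{v_1,v_2,v_3\}$ of size $3$ together with the cliques $\{v_4,v_{14},v_{24},v_{34},v_{45}\}$, $\{v_5,v_{15},v_{25},v_{35}\}$, and $\{v_{12},v_{13},v_{23}\}$ of sizes $5,4,3$; the last is a triangle because any two of $v_{12},v_{13},v_{23}$ share an index. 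For part~(d), use the path $v_1 - v_{12} - v_2$ (the missing edge $v_1v_2$ is automatic) together with the cliques $\{v_3,v_{13},v_{23},v_{34},v_{35}\}$, $\{v_4,v_{14},v_{24},v_{45}\}$, and $\{v_5,v_{15},v_{25}\}$ of sizes $5,4,3$; every pair inside each clique is either a basic edge of $H$ or an optional edge sharing an index.

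Part~(e) needs the most care, and I anticipate it to be the main obstacle. I take the two cliques to be $\{v_1,v_{12},v_{13},v_{14},v_{15}\}$ and $\{v_2,v_{23},v_{24},v_{25}\}$ (sizes $5$ and $4$, both cliques provided all relevant optional edges are included), and split the remaining six vertices into the parts $\{v_{45};\, v_3,v_{34}\}$ and $\{v_4;\, v_5,v_{35}\}$, each realising the disjoint union of a point (the vertex before the semicolon) and an edge. The only substantive restriction is to ensure that $v_{45}$ is isolated inside its part, which forces the optional edge $v_{34}v_{45}$ to be omitted; in the other new part the required isolation of $v_4$ is automatic, since $v_4$ shares no index with $v_{35}$ and the anchor vertices are pairwise non-adjacent. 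Because Lemma~\ref{nonstring} imposes no constraint on which optional edges we pick, all five of these constructions remain non-string graphs, completing the proof.
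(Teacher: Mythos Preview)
Your proof is correct and follows exactly the approach the paper intends: the corollary is derived directly from Lemma~\ref{nonstring} by exhibiting, for each of the five partition types, a specific graph in that family with a suitable choice of optional edges. The paper itself gives no written proof beyond a reference to an illustrative figure, so your explicit constructions (and in particular the care you take in part~(e) to omit the optional edge $v_{34}v_{45}$) flesh out precisely what the figure is meant to convey.
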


See Figure \ref{Figure3} for an illustration of Corollary \ref{thecor}.
\begin{figure}
    \centering
                \includegraphics[width=0.75\textwidth]{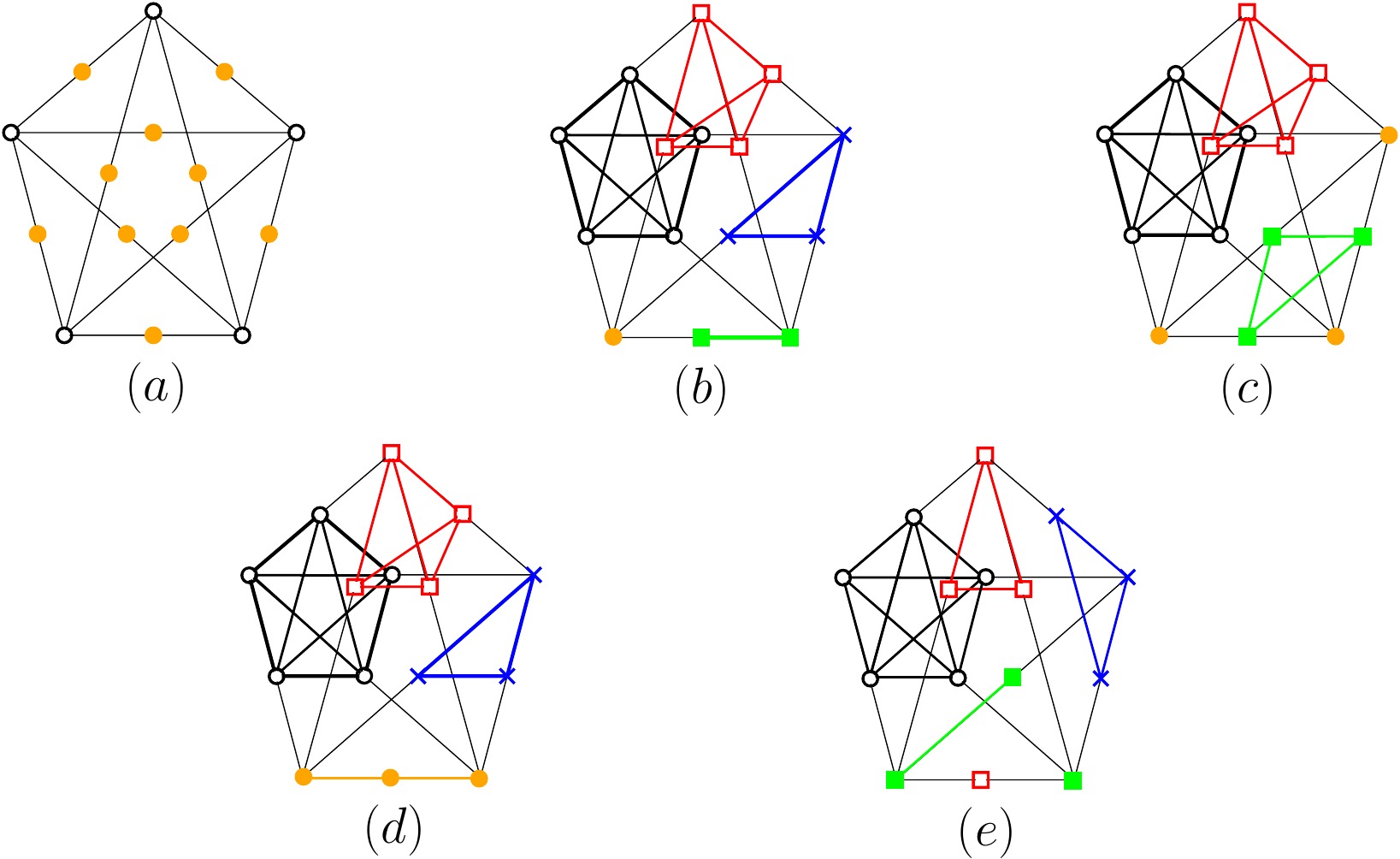}
                \caption{Possible partitions of a non-string graph.\label{Figure3}}
\end{figure}

\begin{proof} [Proof of Theorem \ref{Hstruc.thm}.]
We choose $k$ sufficiently large and then $\delta<\frac{1}{40}$ sufficiently small in terms of $k$.
We choose $\epsilon,b>0$ such that Theorem \ref{abbm.cor} holds for this choice of $k$ and $\delta$ and  so that $\epsilon$ is less than the $\rho$ of Lemma \ref{restrictbound.cor} for this choice of $k$. We set $\gamma= \frac{\epsilon}{10}$ and
consider $n$ large enough to satisfy certain implicit inequalities below. We  know that the subset ${\cal S}(k,\delta)_n$ of  ${\textsc{String}}_n$,
 consisting of those graphs for which there is a  set $B$ of at most $b$ vertices and a partition into  $S_i$ and $A_i$ satisfying (a),(b), and (c) set out in  Theorem \ref{abbm.cor},
contains almost every string graph.  We call such a partition, {\it certifying}.
We need to show that  almost every  graph in ${\cal S}(k,\delta)_n$ has a certifying partition for which we can repartition $S_i \cup A_i$ into $X_i \cup Z_i$ so that  (I),(II), and (IV)  all hold
(that (III) holds, is simply Theorem \ref{abbm.cor} (c) and $S_i \cup A_i=X_i \cup Z_i$).

We prove this  fact via a sequence of lemmas. In doing so, for a specific partition, we let $m=m(A_1 \cup S_1,A_2 \cup S_2,A_3 \cup S_3,A_4 \cup S_4)$  be the number of pairs of vertices not lying together in some $A_i \cup S_i$.
The first  lemma gives us a lower bound  on $|{\cal S}(k,\delta)_n|$, obtained by simply counting the number of graphs which permits a partition into four cliques all of size
within one of $\frac{n}{4}$. Its four line proof is given in the appendix.

\begin{lemma}
\label{countingclaim}
$|{\cal S}(k,\delta)_n| \ge  2^\frac{3{n \choose 2}}{4}$.
\end{lemma}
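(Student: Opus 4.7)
The plan is to exhibit a large, explicit subfamily of ${\cal S}(k,\delta)_n$ whose cardinality already meets the stated bound. Fix any partition $V_n=V_1\cup V_2\cup V_3\cup V_4$ into four parts whose sizes differ by at most one (so each $n_i=|V_i|$ equals $\lfloor n/4\rfloor$ or $\lceil n/4\rceil$). Consider the family ${\cal F}$ of all graphs $G$ on $V_n$ in which each $V_i$ induces a clique and the edges between distinct $V_i,V_j$ are chosen arbitrarily. Since all within-part edges are forced and all $\binom{n}{2}-\sum_i\binom{n_i}{2}$ cross-part edges are free, a direct computation gives
$$|{\cal F}|\;=\;2^{\binom{n}{2}-\sum_{i=1}^{4}\binom{n_i}{2}}\;\ge\;2^{3\binom{n}{2}/4},$$
where the inequality uses $\sum_i\binom{n_i}{2}\le 4\binom{n/4}{2}\le \tfrac14\binom{n}{2}$.

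It remains to check that ${\cal F}\subseteq {\cal S}(k,\delta)_n$. Every $G\in{\cal F}$ is canonical (view $V_4$ as the disjoint union of itself and an empty clique), so by Theorem~\ref{canonical} it is a string graph. To exhibit a certifying partition in the sense of Theorem~\ref{abbm.cor}, I take $S_i=V_i$, $A_i=\emptyset$, and let $B$ consist of one arbitrarily chosen vertex from each $V_i$, so $|B|=4\le b$. Condition~(b) of Theorem~\ref{abbm.cor} is trivial since $\bigcup_i A_i=\emptyset$. For condition~(a), I use the fact that for any disjoint $A',B'\subseteq V_i$ the bipartite subgraph $G[A',B']$ is complete bipartite; on the other hand $U(k)$ is not complete bipartite for $k\ge 2$ (the vertex indexed by $\emptyset$ has degree $0$), so $G[S_i]$ is $U(k)$-free. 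For condition~(c), given $v\in V_i=S_i\cup A_i$ and the unique $a\in B\cap V_i$, the sets $N(v)\cap V_i$ and $N(a)\cap V_i$ are $V_i\setminus\{v\}$ and $V_i\setminus\{a\}$ respectively, so $|(N(v)\bigtriangleup N(a))\cap (S_i\cup A_i)|\le 2\le \delta n$ for $n$ large.

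Combining, every element of ${\cal F}$ lies in ${\cal S}(k,\delta)_n$, yielding $|{\cal S}(k,\delta)_n|\ge|{\cal F}|\ge 2^{3\binom{n}{2}/4}$. There is essentially no obstacle: the only point to verify beyond bookkeeping is that a clique cannot contain $U(k)$, and this is immediate from the failure of $U(k)$ to be complete bipartite.
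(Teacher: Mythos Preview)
Your proof is essentially identical to the paper's: exhibit the family of graphs on a balanced $4$-partition with each part a clique, note they are canonical (hence string graphs), and certify membership in ${\cal S}(k,\delta)_n$ via $S_i=V_i$, $A_i=\emptyset$, and $B$ consisting of one vertex per part. One minor slip: the intermediate step $\sum_i\binom{n_i}{2}\le 4\binom{n/4}{2}$ is backwards (convexity of $x\mapsto\binom{x}{2}$ gives $\ge$), but the conclusion $\sum_i\binom{n_i}{2}\le\tfrac14\binom{n}{2}$ for a balanced partition is nonetheless correct by a direct check, so the argument stands.
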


The second gives us an upper bound on the number of choices for $G[S_i]$ for graphs $G$ in  ${\cal S}(k,\delta)_n$
for which $S_1,S_2,S_3,S_4,A_1,A_2,A_3,A_4$ is a certifying partition. It is Corollary 8 in \cite{AlBBM11}.

\begin{lemma}
\label{restrictbound.cor}
For every k, there is a positive  $\rho$ such that for every sufficiently large $l$, the number of
$U(k)$-free  graphs with $l$ vertices  is less than $2^{l^{2-\rho}}$.
\end{lemma}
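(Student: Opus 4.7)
The lemma is quoted as Corollary~8 of~\cite{AlBBM11}, so in the paper one would simply cite it. In the spirit of the task, here is the strategy I would follow.

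First I would translate $U(k)$-freeness into a uniform trace restriction: for every $k$-subset $A \subseteq V(G)$, the family $\{N(v)\cap A : v\in V(G)\setminus A\}$ must miss at least one subset of $A$, otherwise the $2^k$ realizations form the $B$-side of a copy of $U(k)$. A pigeonhole argument then upgrades this to a uniform bound $d = d(k)$ on the VC dimension of the neighborhood set system $\mathcal{F}=\{N_G(v) : v\in V(G)\}$: if some set $A$ of size $d$ is shattered by $\mathcal{F}$, at most $d$ of the $2^d$ shattering traces come from $A$ itself, so once $d$ is large enough in terms of $k$ all $2^k$ traces on some $k$-subset of $A$ are realized externally, contradicting $U(k)$-freeness. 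The Sauer-Shelah lemma then gives $|\mathcal{F}| = O(l^d)$, so $G$ has only polynomially many neighborhood-twin classes.

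Next I would convert this structural information into the counting bound $2^{l^{2-\rho}}$ via entropy compression. Take a random witness set $W \subseteq V(G)$ of size $s = l^{1-\eta}$ for a small constant $\eta > 0$, and encode $G$ by: (i) the subgraph $G[W]$, costing $\binom{s}{2} \le l^{2(1-\eta)}$ bits; (ii) for each $v \notin W$, the trace $N(v)\cap W$, which is one of at most $s^d$ options, contributing $O(l \log l)$ bits overall; and (iii) the residual adjacencies within $V(G)\setminus W$. The $U(k)$-free hypothesis, combined with a concentration argument over the random choice of $W$, forces most pairs of vertices outside $W$ to have their adjacency determined (up to a sub-quadratic error term) by their traces on $W$. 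Setting $\rho$ smaller than $2\eta$ and tuning the constants then yields the desired bound.

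The hardest part will be step~(iii): bounding the residual edge entropy of $G[V(G) \setminus W]$ by $2^{o(l^{2-\rho})}$. The argument in~\cite{AlBBM11} proceeds by a union bound over pairs of trace-equivalence classes, exploiting $U(k)$-freeness to restrict each bipartite piece between two such classes to a small collection of near-uniform density configurations; this is the step where the forbidden pattern $U(k)$ is used in its full strength, rather than merely through its consequence for the VC dimension.
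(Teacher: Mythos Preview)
The paper does not prove this lemma; as you correctly note in your first sentence, it simply cites it as Corollary~8 of \cite{AlBBM11}. So your proposal already matches the paper's own treatment.

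On the sketch you supply beyond that: the reduction to a bound on the VC dimension of the neighborhood set system is correct and is indeed the engine behind the result. Your step~(iii), however, is misstated. You write that ``most pairs of vertices outside $W$ have their adjacency determined by their traces on $W$,'' but for $u,v\notin W$ the presence or absence of the edge $uv$ is simply invisible from the traces on $W$; no concentration argument changes that. The correct statement is global rather than pairwise: with high probability over a random $W$ of size $s=l^{1-\eta}$, any two vertices sharing the same trace on $W$ have full neighborhoods differing in at most roughly $l^{\eta}\log l$ places, since a larger symmetric difference would almost surely be hit by $W$. One then compresses $G$ by recording, for each of the $O(s^{d})$ trace classes, one representative neighborhood, and for every other vertex the short list of deviations from its representative. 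Summing the costs and choosing $\eta$ and $\rho$ appropriately gives the bound $2^{l^{2-\rho}}$. This near-twin/sample compression is how the sub-quadratic exponent is actually obtained; the ``near-uniform density configurations'' picture you describe in the last paragraph is a different (VC-regularity) route to the same conclusion, not the mechanism that makes step~(iii) go through in the sample-based encoding you set up.
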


Next we prove:

\begin{lemma}
\label{deviationlema}
The number of graphs in  ${\cal S}(k,\delta)_n$ which have a certifying partition  such that for some $i$,  $||A_i \cup S_i|-\frac{n}{4}| > n^{1-\gamma}$
is $o(|{\cal S}(k,\delta)_n|)$.
\end{lemma}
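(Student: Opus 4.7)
The plan is a direct first-moment counting argument. I enumerate all possible profiles of part sizes $(n_1,\ldots,n_4)$ with $\sum_i n_i=n$ and $\max_i \bigl|n_i-\frac{n}{4}\bigr| > n^{1-\gamma}$, upper bound the number of graphs in ${\cal S}(k,\delta)_n$ that admit a certifying partition realizing such a profile, and compare the sum with the lower bound $|{\cal S}(k,\delta)_n|\ge 2^{3\binom{n}{2}/4}$ of Lemma~\ref{countingclaim}. Since there are at most $(n+1)^3$ such profiles, this outer factor is polynomial and negligible.

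For a fixed profile, the structural data of a certifying partition consist of the ordered partition $(T_1,\ldots,T_4)$ of $V_n$ with $|T_i|=n_i$, the sets $A_i\subset T_i$ with $|A_1\cup\cdots\cup A_4|\le n^{1-\epsilon}$ (which determines $S_i=T_i\setminus A_i$), and the set $B$ of at most $b$ vertices; these contribute at most $4^n\cdot 2^{O(n^{1-\epsilon}\log n)}\cdot n^b=2^{O(n)}$ choices in total. Given this data, the edges of $G$ split into three groups: (i) edges between different $T_i$'s, accounting for at most $2^{\sum_{i<j} n_i n_j}$ possibilities; (ii) edges within each $S_i$, accounting for at most $\prod_i 2^{|S_i|^{2-\rho}}\le 2^{4n^{2-\rho}}$ possibilities by Lemma~\ref{restrictbound.cor}, since every $G[S_i]$ is $U(k)$-free; and (iii) the at most $\sum_i |A_i|\cdot n_i\le 4n^{2-\epsilon}$ edges of $G[T_i]$ that touch $A_i$. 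Multiplying everything out, the number of graphs with a certifying partition of this profile is at most
$$2^{\,\sum_{i<j} n_i n_j \,+\, 4n^{2-\rho}\,+\,O(n^{2-\epsilon})\,+\,O(n)}.$$

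The heart of the argument is the identity
$$\sum_{i<j} n_i n_j \;=\; \frac{1}{2}\Bigl[\bigl(\textstyle\sum_i n_i\bigr)^2-\sum_i n_i^2\Bigr] \;=\; \frac{3n^2}{8}-\frac{1}{2}\sum_i\Bigl(n_i-\frac{n}{4}\Bigr)^2,$$
which under the deviation hypothesis forces $\sum_{i<j} n_i n_j\le \frac{3n^2}{8}-\frac{1}{2}n^{2-2\gamma}$. Summing over the $(n+1)^3$ profiles, the total count of deviating graphs is bounded by $2^{3n^2/8-\frac{1}{2}n^{2-2\gamma}+4n^{2-\rho}+O(n^{2-\epsilon})+O(n)}$. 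Since the theorem chooses $\gamma=\epsilon/10$ and $\epsilon<\rho$, we have $2-2\gamma>\max(2-\epsilon,2-\rho)$, so the ``deviation gap'' $\frac{1}{2}n^{2-2\gamma}$ strictly dominates every error term, yielding a bound of $2^{3n^2/8-\Omega(n^{2-2\gamma})}$. This is $2^{-\Omega(n^{2-2\gamma})}$ times the lower bound $2^{3n(n-1)/8}$ from Lemma~\ref{countingclaim}, and hence $o(|{\cal S}(k,\delta)_n|)$ as required. The only real subtlety is this bookkeeping: one must verify that the partition-counting factor $4^n$, the $U(k)$-free contribution $2^{4n^{2-\rho}}$, and the $2^{O(n^{2-\epsilon})}$ coming from edges incident to the $A_i$ are all of strictly smaller exponential order than the deviation gap $n^{2-2\gamma}$, which is guaranteed precisely by our choices $\gamma=\epsilon/10$ and $\epsilon<\rho$ made at the outset. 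Condition (c) of Theorem~\ref{abbm.cor} is not needed for this step.
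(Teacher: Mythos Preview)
Your argument is correct and follows essentially the same approach as the paper: bound the number of certifying partitions together with the induced subgraphs $G[S_i\cup A_i]$ by a factor of the form $2^{O(n^{2-\epsilon})}$ (using the $U(k)$-freeness bound of Lemma~\ref{restrictbound.cor} and the smallness of the $A_i$), multiply by $2^m$ for the cross-edges, and observe that a deviation of $n^{1-\gamma}$ in some part size forces $m\le \tfrac{3}{8}n^2-\tfrac12 n^{2-2\gamma}$, which beats the error terms since $\gamma=\epsilon/10<\epsilon<\rho$. The paper's write-up is terser (it simply uses the crude bound $8^n$ for the partition and absorbs the $U(k)$-free count into a single $2^{O(n^{2-\epsilon})}$ factor), but the mechanism is identical.
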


\begin{proof}
The number of choices for a  partition of $V_n$ into $S_1,S_2,S_3,S_4,A_1,A_2,A_3,A_4$   is at most
$8^n$. If this partition demonstrates that   $S_i$ is $U(k)$-free and $n$ is large, Lemma \ref{restrictbound.cor}  tells us that there  are only $2^{n^{2-\epsilon}}$ choices for $G[S_i]$. The number of  choices for the edges out of each vertex of $A_i$ is $2^{n-1}$. So, since $|A_i|$ is  at most $n^{1-\epsilon}$,  we know there are at most $2^{n^{2-\epsilon}}$ choices for the edges out of $A_i$. It follows that there are
at most $2^{11(n^{2-\epsilon})}$ choices for our partition and the
graphs $G[S_1 \cup A_1],....,G[S_4 \cup A_4]$ over all $G$ in ${\cal S}(k,\delta)_n$  which can be certified using this partition.
Furthermore,the number of graphs  in ${\cal S}(k,\delta)_n$ permitting such a  certifying choice is  at most $2^m$. Since, $|{\cal S}(k,\delta)_n| \ge 2^\frac{3{n \choose 2}}{4}$,
it follows that  almost every graph  $G$ in ${\cal S}(k,\delta)_n$ has no  certifying partition for which $m<\frac{3{n \choose 2}}{4}-12(n^{2-\epsilon})$. The desired
result follows.
\end{proof}

Setting  $l=l_n=\lceil n^{1-\frac{\epsilon}{7}} \rceil$, we have the following.

\begin{lemma}
\label{firststruclema}
The number of graphs in  ${\cal S}(k,\delta)_n$ which have a certifying partition  for which  there are distinct $i$ and $j$ such that  both $S_i$ and $S_j$ contain
$l$ disjoint independent sets of size 10 is $o(|{\cal S}(k,\delta)_n|)$.
\end{lemma}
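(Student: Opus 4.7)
The plan is to invoke Corollary~\ref{thecor}(a), which furnishes a non-string bipartite graph $H^*$ with vertex classes $V_1,V_2$ of size at most $10$. Padding each side with isolated vertices preserves non-string-ness (since removing the curves for the isolated vertices from any alleged realization of the padded graph would realize the original), so I may assume $|V_1|=|V_2|=10$. The key observation is that if $G\in{\cal S}(k,\delta)_n$ and $I_a\subseteq S_i$, $J_b\subseteq S_j$ are independent $10$-sets of $G$, then $G[I_a\cup J_b]$ is an induced subgraph of the string graph $G$ and hence is itself a string graph; thus the labeled bipartite graph $G[I_a,J_b]$ cannot be isomorphic to $H^*$. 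Since at most $2(10!)^2$ of the $2^{100}$ labeled bipartite graphs on $(I_a,J_b)$ are isomorphic to $H^*$, the number of \emph{allowed} bipartite graphs is at most $(1-\eta)\,2^{100}$ for some absolute constant $\eta>0$.

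I would then fix a partition $V_n=S_1\cup\cdots\cup S_4\cup A_1\cup\cdots\cup A_4$ and a choice of $l$ disjoint independent $10$-sets in each of $S_i$ and $S_j$, and bound the number of graphs $G\in{\cal S}(k,\delta)_n$ certified by this data. The $l^2$ pairs $(I_a,J_b)$ involve pairwise disjoint potential edges between $S_i$ and $S_j$, so the $H^*$-avoidance constraints act independently, yielding at most
\[
2^{|S_i||S_j|}\,(1-\eta)^{l^2}\;\le\;2^{|S_i||S_j|-c'l^2}
\]
choices for $G[S_i,S_j]$, for some constant $c'>0$. The remaining edges are bounded coarsely: Lemma~\ref{restrictbound.cor} gives at most $2^{n^{2-\rho}}$ choices for each $U(k)$-free $G[S_{i'}]$; each other bipartite part $G[S_{i'},S_{j'}]$ contributes at most $2^{|S_{i'}||S_{j'}|}$; and the edges incident to $A_1\cup\cdots\cup A_4$ contribute at most $2^{n^{2-\epsilon}}$, using $|A_1\cup\cdots\cup A_4|\le n^{1-\epsilon}$.

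Convexity of $\binom{\cdot}{2}$ gives $\sum_{i'<j'}|S_{i'}||S_{j'}|\le 3\binom{n}{2}/4+O(n)$, so the per-partition-and-choice count is at most $2^{3\binom{n}{2}/4+O(n^{2-\rho})+O(n^{2-\epsilon})-c'l^2}$. Multiplying by the $8^n=2^{3n}$ partitions, the at most $n^{20l}$ ways to choose the $l$ disjoint $10$-sets in each of $S_i$ and $S_j$, and the $O(1)$ choices for $(i,j)$, and then dividing by the lower bound $|{\cal S}(k,\delta)_n|\ge 2^{3\binom{n}{2}/4}$ from Lemma~\ref{countingclaim}, the ratio of bad graphs is at most
\[
2^{3n+O(n^{1-\epsilon/7}\log n)+O(n^{2-\rho})+O(n^{2-\epsilon})-c'l^2}.
\]
Since $\epsilon$ is chosen smaller than $\rho$, we have $2\epsilon/7<\rho$, so $l^2=\Theta(n^{2-2\epsilon/7})$ strictly dominates each positive error term, and the ratio is $o(1)$.

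The hardest part will be keeping the exponents in balance: the per-pair saving from avoiding $H^*$ is only a tiny (but positive) constant $\eta$, so the cumulative saving in the exponent is a modest $\Theta(l^2)$, which must nevertheless overwhelm the $U(k)$-free contribution of $n^{2-\rho}$ and the $A$-edge contribution of $n^{2-\epsilon}$. This is precisely why $l$ is taken to be $\lceil n^{1-\epsilon/7}\rceil$ with $\epsilon<\rho$: it guarantees $2-2\epsilon/7$ is strictly larger than each of $2-\rho$, $2-\epsilon$, and $1$, so all competing terms fall below the saving.
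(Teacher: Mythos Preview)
Your argument is correct and follows essentially the same route as the paper: fix the partition and associated data, invoke Corollary~\ref{thecor}(a) to forbid at least one of the $2^{100}$ bipartite edge-patterns on each of the $l^2$ disjoint pairs of $10$-sets, extract a $(1-\eta)^{l^2}$ saving on the cross-edges, and verify that $l^2=\Theta(n^{2-2\epsilon/7})$ dominates the $O(n^{2-\epsilon})$ and $O(n^{2-\rho})$ overheads coming from the partition, the $U(k)$-free counts, and the edges touching $A$. The paper's version is organized slightly differently (it fixes the induced subgraphs $H_i=G[A_i\cup S_i]$ rather than separately summing over the choices of the $10$-sets), but the substance is identical.

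One sentence in your write-up is logically backwards, though. Saying that \emph{at most} $2(10!)^2$ of the $2^{100}$ labelled bipartite graphs on $(I_a,J_b)$ are isomorphic to $H^*$ is an upper bound on the \emph{forbidden} configurations, hence a \emph{lower} bound on the allowed ones; it does not give the upper bound $(1-\eta)2^{100}$ you claim. What you actually need (and what the paper uses) is the trivial observation that \emph{at least one} labelling realises $H^*$, so at most $(1-2^{-100})2^{100}$ configurations are allowed; your $\eta$ is then just $2^{-100}$.
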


\begin{proof}
Consider  a choice of certifying partition and induced subgraphs $H_1,H_2,H_3,H_4$ where $V(H_i)=A_i \cup S_i$.
By Corollary \ref{thecor}(a),  for any  pair of independent sets of size $10$,  at least one of the $2^{100}$ choices of edges
between the sets yields  a bipartite non-string graph. Thus, the number of choices for edges between the
partitions  which  extend our choice to yield a graph in $String _n$ is at most   $2^m(1-\frac{1}{2^{100}})^{l^2}$.
Since $m<\frac{3 {n \choose 2}}4$ and $l^2= \omega(n^{2-\frac{\epsilon}{2}})$,
it follows that for almost every graph in ${\cal S}(k,\delta)_n$, almost every certifying partition  does not contain two distinct such $i$ and $j$.
\end{proof}

Ramsey theory tells us that if a graph $J$  does not contain $l$ disjoint   stable sets of size 10, it contains $|V(J)|-10(l-1)-2^{15}$ disjoint cliques of size 5. Combining applications of this fact to three of the $G[S_i]$, Corollary 11(c),  and an argument similar to that used in the proof of Lemma \ref{firststruclema} allows  us to prove the following lemma. Details can be found in the appendix.
\begin{lemma}
\label{secondstruclema}
The number of graphs $G$ in ${\cal S}(k,\delta)_n$ which have a certifying partition for which there is an $i=i(G)$ such that $S_i$ does not contain $l$ disjoint cliques of size 5 is $o(|{\cal S}(k,\delta)_n|)$
\end{lemma}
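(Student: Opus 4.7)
The plan is to copy the strategy of Lemma~\ref{firststruclema}, but with $4$-tuples of substructures in place of pairs, using Corollary~\ref{thecor}(c) as the source of forbidden non-string configurations. First I reduce to graphs $G\in{\cal S}(k,\delta)_n$ for which every certifying partition satisfies both $||S_i\cup A_i|-n/4|\le n^{1-\gamma}$ for each $i$ (Lemma~\ref{deviationlema}) and the property that at most one of the four $S_i$ contains $l$ disjoint stable sets of size $10$ (Lemma~\ref{firststruclema}); this discards only $o(|{\cal S}(k,\delta)_n|)$ graphs. Now suppose such a $G$ has a certifying partition and an index $j$ for which $S_j$ lacks $l$ disjoint cliques of size $5$. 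Because $|S_j|\ge n/4-2n^{1-\gamma}\gg 11l+2^{15}$, the Ramsey fact stated just before the lemma forces $S_j$ to contain $l$ disjoint stable sets of size $10$, and in particular $l$ disjoint stable sets $T_j^1,\ldots,T_j^l$ of size $3$. Moreover $j$ must be the unique such index, so applying the Ramsey fact in the opposite direction to each of the three parts $S_i$ with $i\neq j$ yields $l$ disjoint cliques $C_i^1,\ldots,C_i^l$ of size $5$ in each.

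Set $\{i_1,i_2,i_3\}=\{1,2,3,4\}\setminus\{j\}$. For each $4$-tuple $\mathbf{a}=(a_1,a_2,a_3,a_4)\in\{1,\ldots,l\}^4$, the vertex set $C_{i_1}^{a_1}\cup C_{i_2}^{a_2}\cup C_{i_3}^{a_3}\cup T_j^{a_4}$ has $18$ vertices partitioned into three cliques of size $5$ and a stable set of size $3$, spanning exactly $120$ cross edges. By Corollary~\ref{thecor}(c) at least one of the $2^{120}$ configurations of those cross edges realizes the non-string graph of that corollary, so since string graphs are hereditary, at most a fraction $1-2^{-120}$ of cross-edge patterns on those $120$ edges is compatible with $G$ being a string graph.

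To convert these local constraints into a sharp counting bound I need many $4$-tuples whose cross-edge sets are pairwise disjoint; two $4$-tuples produce disjoint cross-edge sets exactly when they agree in at most one coordinate. A Reed--Solomon $[4,2,3]$ code over $\mathbb{F}_{l'}$, for a prime $l'$ of size $\Theta(l)$ supplied by Bertrand's postulate, gives $\Theta(l^2)$ such $4$-tuples. I expect this to be the main technical point: the naive choice of $l$ diagonal $4$-tuples would save only a factor $(1-2^{-120})^l$, which is swamped by the $2^{11n^{2-\epsilon}}$ factor bounding the number of induced-subgraph choices (as $l=n^{1-\epsilon/7}\ll n^{2-\epsilon}$), whereas $l^2=n^{2-2\epsilon/7}$ dominates $n^{2-\epsilon}$ and so suffices.

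Assembling everything as in the proofs of Lemmas~\ref{deviationlema} and~\ref{firststruclema}, the number of graphs in ${\cal S}(k,\delta)_n$ admitting a bad certifying partition is bounded by
$$ 4\cdot 8^n\cdot 2^{11n^{2-\epsilon}}\cdot 2^m\cdot (1-2^{-120})^{l^2}, $$
where $m\le 3\binom{n}{2}/4+O(n)$. Since $l^2=n^{2-2\epsilon/7}$ grows strictly faster than $n^{2-\epsilon}$, the exponent on the right is $3\binom{n}{2}/4-\omega(n^{2-\epsilon})$, so this quantity is $o(2^{3\binom{n}{2}/4})=o(|{\cal S}(k,\delta)_n|)$, as required.
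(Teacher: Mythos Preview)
Your proof is correct and matches the paper's approach: after reducing via Lemmas~\ref{deviationlema} and~\ref{firststruclema} and applying Ramsey, the paper also invokes Corollary~\ref{thecor} on $\Theta(l^2)$ four-tuples with pairwise disjoint cross-edge sets, constructed as $(r,r+s,r+2s,r+3s)$ over $\mathbb{F}_p$ for a prime $p\in[l/2,l]$ --- precisely your Reed--Solomon $[4,2,3]$ code. The only cosmetic difference is that the paper retains the full independent sets of size $10$ rather than shrinking to size $3$, so its forbidden-pattern factor is $(1-2^{-225})^{p^2}$ instead of your $(1-2^{-120})^{\Theta(l^2)}$.
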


With this lemma in hand, we can mimic the argument used in its proof to obtain the following two lemmas.
In doing so, we apply Corollary 11 (c),(d), and (e).

\begin{lemma}
\label{thirdstruclema}
The number of graphs $G$  in  ${\cal S}(k,\delta)_n$ which have a certifying partition  for which  there is an $i=i(G)$ such that  $S_i$ contains $l$ disjoint
sets of size three  each inducing a stable set  or a path
is $o(|{\cal S}(k,\delta)_n|)$.
\end{lemma}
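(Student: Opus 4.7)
The proof parallels the one for Lemma~\ref{secondstruclema}, using Corollary~\ref{thecor}(c) (for stable triples) and~(d) (for $P_3$ triples) in place of~(a). After applying Lemmas~\ref{deviationlema} and~\ref{secondstruclema} to discard $o(|{\cal S}(k,\delta)_n|)$ graphs, I may assume that every certifying partition of $G$ satisfies $\bigl\lvert|A_j\cup S_j|-n/4\bigr\rvert\le n^{1-\gamma}$ for each $j$ (so that $m\le\frac{3}{4}{n\choose 2}$), and that each $S_j$ already contains $l$ pairwise vertex-disjoint $5$-cliques $K^{(j)}_1,\ldots,K^{(j)}_l$. Now fix a certifying partition whose part $S_i$ contains $l$ vertex-disjoint triples $T_1,\ldots,T_l$, each inducing a stable set or a $P_3$, and write $\{j_1,j_2,j_3\}=\{1,2,3,4\}\setminus\{i\}$.

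\textbf{Edge-disjoint configurations.} Using Bertrand's postulate, choose a prime $l'\in(l/2,l]$ and, for every $(a,b)\in\mathbb{F}_{l'}^{\,2}$, define the quadruple
$$Q_{a,b}\;=\;\bigl(T_a,\;K^{(j_1)}_{b},\;K^{(j_2)}_{a+b},\;K^{(j_3)}_{a+2b}\bigr),$$
with all indices reduced modulo $l'$. These $(l')^2$ quadruples are the codewords of a Reed--Solomon $[4,2]$-code over $\mathbb{F}_{l'}$, whose minimum Hamming distance is $3$; hence any two distinct $Q_{a,b}$ and $Q_{a',b'}$ agree on at most one coordinate, so they share no pair of parts and in particular no inter-part edge of $G$. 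For each $Q_{a,b}$, Corollary~\ref{thecor}(c) (if $T_a$ is a stable triple) or Corollary~\ref{thecor}(d) (if $T_a$ is a $P_3$) supplies a non-string graph on the partition $(T_a,K^{(j_1)}_b,K^{(j_2)}_{a+b},K^{(j_3)}_{a+2b})$; since the property of being a string graph is hereditary, at least one of the $2^C$ patterns (for some constant $C\le 120$) of inter-part edges inside $Q_{a,b}$ forces $G\notin{\textsc{String}}_n$.

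\textbf{Counting, and the main obstacle.} By independence across the $(l')^2$ edge-disjoint quadruples, the number of inter-part edge assignments compatible with $G\in{\textsc{String}}_n$ is at most $2^m(1-2^{-C})^{(l')^2}$. Multiplying by $8^n$ for the partition and by $2^{11 n^{2-\epsilon}}$ for the choice of the four induced subgraphs $G[S_j\cup A_j]$ (via Lemma~\ref{restrictbound.cor} and $|A_j|\le n^{1-\epsilon}$) bounds the total by
$$2^{\frac{3}{4}{n\choose 2}\,+\,O(n^{2-\epsilon})\,-\,\Omega((l')^2)}\;=\;2^{\frac{3}{4}{n\choose 2}\,-\,\Omega(n^{2-2\epsilon/7})}\;=\;o\bigl(|{\cal S}(k,\delta)_n|\bigr),$$
using $(l')^2\ge l^2/4=\Omega(n^{2-2\epsilon/7})$, $2-2\epsilon/7>2-\epsilon$, and Lemma~\ref{countingclaim}. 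The main difficulty is precisely extracting $\Omega(l^2)$ edge-disjoint configurations from only $l$ triples and $l$ cliques in each of the three other parts: the naive family of $l$ vertex-disjoint quadruples only yields the factor $(1-2^{-C})^{l}$, which is far too close to $1$ to overcome the $2^{O(n^{2-\epsilon})}$ slack from the choice of partition and of the internal subgraphs $G[S_j\cup A_j]$.
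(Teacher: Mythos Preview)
Your proof is correct and follows essentially the same route as the paper's. The paper does not spell out the argument for Lemma~\ref{thirdstruclema}; it merely says to mimic the proof of Lemma~\ref{secondstruclema} using Corollary~\ref{thecor}(c) and (d), which is exactly what you do: reduce (via Lemmas~\ref{deviationlema} and~\ref{secondstruclema}) to graphs whose certifying partitions already provide $l$ disjoint $5$-cliques in every $S_j$, then build $\Theta(l^2)$ pairwise edge-disjoint quadruples and invoke the forbidden-pattern count. Your parametrization $(a,b,a+b,a+2b)$ differs cosmetically from the paper's $(r,r+s,r+2s,r+3s)$ used in the proof of Lemma~\ref{secondstruclema}, but both are $[4,2]$ MDS codes over $\mathbb{F}_{l'}$, so any two quadruples share at most one part and hence no inter-part edge. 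One trivial quibble: the bound $m\le\frac{3}{4}\binom{n}{2}$ is off by $\frac{3n}{8}$ even for the perfectly balanced partition (the paper makes the same slip in the proof of Lemma~\ref{firststruclema}); this $O(n)$ discrepancy is of course swallowed by the $O(n^{2-\epsilon})$ slack and does not affect the conclusion.
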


\begin{lemma}
\label{fourthstruclema}
The number of graphs $G$  in  ${\cal S}(k,\delta)_n$ which have a certifying partition  for which  there are  two distinct  $i$ such that  $S_i$ contains $l$ disjoint
sets of size four  each inducing the disjoint union of a vertex and a triangle  is $o(|{\cal S}(k,\delta)_n|)$.
\end{lemma}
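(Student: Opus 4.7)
Proof plan. We mimic the counting argument of Lemmas~\ref{firststruclema}--\ref{thirdstruclema}, using Corollary~\ref{thecor}(e) in place of its earlier parts. By Lemma~\ref{secondstruclema}, we may restrict attention to graphs $G\in{\cal S}(k,\delta)_n$ whose certifying partition has each $S_i$ containing $l$ pairwise disjoint $K_5$'s. Suppose further that two of the $S_i$'s, say $S_1$ and $S_2$, each contain $l$ pairwise disjoint induced copies of $K_1\sqcup K_3$, denoted $P_1^{(\alpha)}$ and $P_2^{(\beta)}$ for $1\le\alpha,\beta\le l$. Fix $K_5$'s $C_3\subseteq S_3$ and $C_4\subseteq S_4$. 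For each pair $(\alpha,\beta)$, the $18$-vertex induced subgraph of $G$ on $C_3\cup C_4\cup P_1^{(\alpha)}\cup P_2^{(\beta)}$ exhibits the partition type of Corollary~\ref{thecor}(e)---two $K_5$'s and two $K_1\sqcup K_3$'s---so at least one of the boundedly many possible bipartite configurations among its four parts produces a non-string graph and is therefore forbidden in~$G$.

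As in the proof of Lemma~\ref{firststruclema}, the key combinatorial fact is that the $16$-edge bipartite subgraphs $P_1^{(\alpha)}\times P_2^{(\beta)}$ are pairwise disjoint across distinct $(\alpha,\beta)$. Fix the partition $(S_i,A_i)$, the induced subgraphs $G[S_i\cup A_i]$, the $A_i$-incident edges, and every inter-$S_i$ edge outside $S_1\times S_2$; in particular the ``shell'' edges $P_r^{(\cdot)}\times(C_3\cup C_4)$ and $C_3\times C_4$ are all fixed. Under this conditioning, each of the $l^2$ pairs forbids at most one of the $2^{16}$ possible patterns on $P_1^{(\alpha)}\times P_2^{(\beta)}$, and by the disjointness these constraints decouple. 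Hence the number of valid $S_1$--$S_2$ bipartite completions is at most $2^{|S_1|\cdot|S_2|}\bigl(1-2^{-16}\bigr)^{l^2}$. Combining this reduction with Lemma~\ref{countingclaim}, Lemma~\ref{restrictbound.cor}, and the $2^{O(n^{2-\epsilon})}$ counting framework of Lemma~\ref{deviationlema}, and using $l^2=\omega(n^{2-\epsilon})$ coming from $l=\lceil n^{1-\epsilon/7}\rceil$, we conclude that the number of graphs having the forbidden structure is $o(|{\cal S}(k,\delta)_n|)$.

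The main delicacy is to check that every pair $(\alpha,\beta)$ genuinely eliminates at least one of the $2^{16}$ patterns on $P_1^{(\alpha)}\times P_2^{(\beta)}$, notwithstanding the fact that the ``bad'' completion depends on the already-fixed shell edges. This is handled exactly as in the proof of Lemma~\ref{firststruclema}: once the specific non-string witness supplied by Corollary~\ref{thecor}(e) is fixed, together with the shell edges of the $18$-vertex subgraph, it prescribes a unique forbidden pattern on $P_1\times P_2$ for each pair, so the argument proceeds verbatim.
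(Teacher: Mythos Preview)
Your argument has a real gap at the step where, having frozen every inter-part edge outside $S_1\times S_2$, you assert that each pair $(\alpha,\beta)$ still forbids at least one of the $2^{16}$ patterns on $P_1^{(\alpha)}\times P_2^{(\beta)}$. Corollary~\ref{thecor}(e) supplies a single non-string graph $J$, and $J$ carries a specific bipartite graph between \emph{each} of the six pairs of parts, not only between the two $K_1\sqcup K_3$ pieces. After you freeze the five ``shell'' bipartite graphs $P_1^{(\alpha)}\times C_3$, $P_1^{(\alpha)}\times C_4$, $P_2^{(\beta)}\times C_3$, $P_2^{(\beta)}\times C_4$, $C_3\times C_4$ to whatever $G$ happens to have, there is no reason these should match (under any labelling of the parts) the corresponding pieces of $J$; when they do not, no choice on $P_1^{(\alpha)}\times P_2^{(\beta)}$ completes a copy of $J$, and that pair eliminates nothing. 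Your appeal to Lemma~\ref{firststruclema} does not rescue this: there the witness from Corollary~\ref{thecor}(a) is bipartite, so \emph{all} of its cross-edges lie between the two varying parts and there are no shell edges to match. That is precisely why the argument of Lemma~\ref{firststruclema} is so short, and why it does not transfer verbatim to a four-part witness.

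The paper instead mimics the proof of Lemma~\ref{secondstruclema}: by that lemma one may assume each $S_k$ contains $l$ disjoint $K_5$'s, so in particular $S_3$ and $S_4$ do; together with the $l$ disjoint copies of $K_1\sqcup K_3$ in $S_1$ and in $S_2$, one forms, for a prime $p\in(l/2,l]$, the $p^2$ quadruples $Z^1_r,Z^2_{r+s},Z^3_{r+2s},Z^4_{r+3s}$ (indices mod~$p$). Any two of these quadruples share at most one part, so their \emph{full} cross-edge sets (all six bipartite pieces) are pairwise disjoint. Now Corollary~\ref{thecor}(e) forbids, for each quadruple, one of the boundedly many choices of all its cross-edges, and the product bound $2^m(1-2^{-c})^{p^2}$ with $p^2=\Theta(l^2)=\omega(n^{2-\epsilon/2})$ applies directly. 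The point is that by varying all four parts you never have to match pre-fixed shell edges to the witness.
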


Combining these lemmas, and possibly permuting indices, we see that almost every graph in ${\cal S}(k,\delta)_n$ has a certifying partition for which
for every $i \le 4$ we have $||Z_i \cup X_i|-\frac{n}{4}| \le n^{1-\gamma}$,  no $S_i$ contains more than $l$ sets inducing  a path of length three or  a stable set of size three,
and for every $k \le 3$, $S_k$  does not contain $l$ disjoint sets inducing the disjoint union of a vertex and a triangle. For each such graph, we
consider such a partition. For all  $i<4$, we let $Z_i$ be the union of $A_i$  and a maximum family of disjoint sets in $X_i$ each inducing a path of length 3, a stable set of  size three, or the disjoint union of a triangle and a vertex. We let $Z_4$ be the union of $A_4$  and a maximum family of disjoint sets in $X_4$ each inducing a path of length three or a stable set of size three. We set $X_i=S_i -Z_i$.
\end{proof}

\section{Completing the proof of Theorem~\ref{main}}
\label{mainproof}

In this section, we prove our main result. By a   {\it great} partition  of $G$ we mean a partition of its vertex set into
$X_1,X_2, X_3,X_4$ such that for $i\le 3$, $X_i$ is a clique and $X_4$ is the disjoint union of two cliques. We call 
a graph {\it great} if it has a great partition and {\it mediocre} otherwise.  
Theorem \ref{main} simply states  that almost every string graph  $G$ on $V_n$ is great.

Thus, we are trying to show that almost every string graph has a partition  into sets $X_1,X_2,X_3,X_4,Z_1,Z_2,Z_3,Z_4$ 
satisfying  Theorem \ref{Hstruc.thm} (I) with the sets $Z_i$ empty.
 We  choose $\delta$ so small that Theorem~\ref{Hstruc.thm} holds and $\delta$ also satisfies certain inequalities implicitly given below.  We apply Theorem~\ref{Hstruc.thm} and obtain that for some positive  $\gamma$ and $b$, for  almost every  graph in ${\textsc{String}}_n$  there is  a partition  of $V_n$ into $X_1, ..., X_4,Z_1,..,Z_4$ satisfying
(I), (II), (III), and (IV). Note that if we reduce $\gamma$ the theorem remains true. We insist that $\gamma$ is at most
$\frac{1}{64000000}$. We call such partitions {\it good}.
We need  to show that the number of mediocre  string graphs
on $V_n$ with a good partition  is of smaller order than the number of great graphs on $V_n$.

The following result tells us that the number of great graphs on $V_n$ is of the same order as the number of
great  partitions of  graphs on $V_n$.

\begin{claim}
\label{ourclaim}
The ratio between the number of great partitions of graphs on $V_n$ and the number of graphs which permit such partitions is $6+o(1)$.
\end{claim}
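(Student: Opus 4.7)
The plan is to establish matching lower and upper bounds of $6$ on the average number of great partitions per great graph. Writing $M$ for the total number of pairs $(G,P)$ with $P$ a great partition of $G$ on $V_n$, and $N$ for the number of great graphs on $V_n$, the claim asserts $M/N = 6 + o(1)$. The lower bound $M \geq (6-o(1))N$ is essentially immediate from the symmetry of permuting the three clique parts: for any great partition $(X_1,X_2,X_3,X_4)$ of $G$, the $6$ partitions obtained by permuting $X_1,X_2,X_3$ are distinct great partitions of the same $G$, provided that both cliques inside $X_4$ are nonempty, so that $X_4$ is distinguished from the first three parts. Partitions for which $X_4$ induces a single clique form a fraction of order $2^{-\Theta(n)}$ of the total count and can be discarded.

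For the upper bound, I would first restrict attention to \emph{balanced} great partitions, meaning those with all four part sizes within $n^{1-\gamma'}$ of $n/4$: the number of graphs compatible with a fixed great partition of sizes $(n_1,n_2,n_3,n_4)$ equals $2^{\binom{n}{2}-\sum_i\binom{n_i}{2}+n_4-1}$, where the factor $2^{n_4-1}$ counts the splits of $X_4$ into two cliques, and this expression is exponentially maximized near $n_i = n/4$. Standard convexity then shows that unbalanced pairs $(G,P)$ contribute a vanishing fraction of $M$.

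The core step is to control $\sum_{P_1\not\sim P_2}|A_{P_1}\cap A_{P_2}|$, where the sum ranges over pairs of balanced great partitions not related by a permutation of the first three parts, and $A_P$ is the set of graphs for which $P$ is a great partition. I would parameterize such a pair by its Hamming distance $d$, defined as the minimum over the $6$ permutations $\sigma$ of $\{1,2,3\}$ of $|\{v : i_1(v)\neq\sigma(i_2(v))\}|$. For fixed $P_1,P_2$ at Hamming distance $d$, each of the $d$ ``displaced'' vertices lies in a new clique part of $P_2$, and its $\Omega(n)$ edges into that part are forced by $P_2$ but were free in $A_{P_1}$; this gives $|A_{P_1}\cap A_{P_2}|\leq 2^{-\Omega(dn)}|A_{P_1}|$. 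Since the number of balanced $P_2$ at Hamming distance $d$ from a fixed $P_1$ is at most $\binom{n}{d}\cdot 4^d = 2^{O(d\log n)}$, summing over $d\geq 1$ yields $\sum_{P_2\not\sim P_1}|A_{P_1}\cap A_{P_2}|\leq 2^{-\Omega(n)}|A_{P_1}|$, and a further sum over balanced $P_1$ gives the excess bound $M-6N = o(M)$, whence $M/N = 6+o(1)$.

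The main obstacle is the quantitative edge-counting behind $|A_{P_1}\cap A_{P_2}|\leq 2^{-\Omega(dn)}|A_{P_1}|$, particularly in the cases where $P_1$ and $P_2$ have different $X_4$'s: one must then combine the clique constraints from three parts of one partition with the disjoint-union-of-two-cliques constraint on the fourth of the other, and verify that the number of ``new'' forced edges or forced non-edges still totals $\Omega(dn)$ after accounting for the slack introduced by the freedom to choose the split of each $X_4$.
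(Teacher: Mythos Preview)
Your plan differs substantially from the paper's proof and, as written, contains a real gap in the key estimate. The paper does not compute overlaps $|A_{P_1}\cap A_{P_2}|$ at all. Instead it isolates a structural property (P*) of a pair $(G,(X_1,X_2,X_3,X_4))$---roughly: vertices in the same clique part have at least $\frac{13n}{32}$ common neighbours while vertices in different parts have fewer, every vertex outside a part $X_i$ forms a $P_3$ with two vertices of $X_i$, and $X_4$ is not a clique---and shows by straightforward Chernoff bounds (over the uniformly random cross-edges) that almost every graph--great-partition pair satisfies (P*). It then observes that (P*) determines the unordered triple $\{X_1,X_2,X_3\}$ and hence $X_4$, so any single graph carries at most six great partitions satisfying (P*). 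This yields $M\le (6+o(1))N$ without ever comparing two partitions quantitatively.

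Your overlap approach could in principle be pushed through, but the specific bound $|A_{P_1}\cap A_{P_2}|\le 2^{-\Omega(dn)}|A_{P_1}|$ is \emph{false}, not merely unverified. Take a balanced $P_1=(X_1,X_2,X_3,X_4)$, pick a single vertex $a\in X_4$, set $B=X_4\setminus\{a\}$, and let $P_2=(X_1,X_2,B,\;X_3\cup\{a\})$. Both partitions are balanced; after the best permutation of $\{1,2,3\}$ the displaced vertices are exactly $X_3\cup B$, so $d\approx n/2$. But for $G\in A_{P_1}\cap A_{P_2}$ the only constraints beyond those of $A_{P_1}$ are that $B$ be a clique (which pins the split of $X_4$ down to two choices out of $2^{|X_4|-1}$) and that $G[X_3\cup\{a\}]$ be a disjoint union of two cliques (which, since $X_3$ is already a clique, forces $a$ to be complete or anticomplete to $X_3$: two choices out of $2^{|X_3|}$). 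Hence $|A_{P_1}\cap A_{P_2}|/|A_{P_1}|=2^{-\Theta(n)}$, not $2^{-\Theta(n^2)}$. Against your count $\binom{n}{d}4^d\approx 2^{2n}$ of partitions at distance $d\approx n/2$, the sum does not converge. The flaw is in the sentence ``each of the $d$ displaced vertices lies in a new clique part of $P_2$, and its $\Omega(n)$ edges into that part are forced by $P_2$ but were free in $A_{P_1}$'': displaced vertices may land in $X_4^{(2)}$ (not a clique), and those landing in a clique part $X_j^{(2)}\subseteq X_4^{(1)}$ have their within-part edges already constrained in $A_{P_1}$, so nothing new is gained. A rescue would require parameterising by the full $4\times 4$ intersection pattern of $P_1$ and $P_2$ (and the positions of the two components of each $X_4$) rather than by the scalar $d$; the paper's (P*) argument sidesteps all of this.
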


So, it is sufficient to show that the number  of mediocre  string graphs with a good partition
on $V_n$  is of smaller order than the number of graphs with a great  partition  on $V_n$.
In doing so, we consider each partition separately. 
For every partition  ${\cal Y}=(Y_1,Y_2,Y_3,Y_4)$   of $V_n$ we say that a  good partition  
satisfying (I)-(IV) with $Y_i=X_i \cup Z_i$ for every $i$ is ${\cal Y}$-good.
We prove:

\begin{claim}
 \label{ourotherclaim}
For every partition  ${\cal Y}=(Y_1,Y_2,Y_3,Y_4)$   of $V_n$, the number of  graphs which permit
a  great partition with $X_i=Y_i$  for every $i$  is of larger order then the size of the set ${\cal F}={\cal F}_{\cal Y}$ of  mediocre string graphs
which permit a ${\cal Y}$-good partition. 
\end{claim}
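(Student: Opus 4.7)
The plan is to evaluate the number of graphs on $V_n$ for which $\mathcal{Y}$ is a great partition, and then to bound $|\mathcal{F}_{\mathcal{Y}}|$ from above by parameterising each graph by a $\mathcal{Y}$-good partition. For the first quantity: requiring $\mathcal{Y}=(Y_1,Y_2,Y_3,Y_4)$ to be a great partition of $G$ forces all edges inside $Y_1,Y_2,Y_3$, admits $2^{|Y_4|-1}$ choices for splitting $Y_4$ into two cliques, and leaves every cross-edge unrestricted, giving exactly $2^{|Y_4|-1+\sum_{i<j}|Y_i||Y_j|}$ such graphs.

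For $|\mathcal{F}_{\mathcal{Y}}|$, I would, for each $G\in\mathcal{F}_{\mathcal{Y}}$, fix a $\mathcal{Y}$-good partition $(X_i,Z_i)_{i=1}^4$. Since $|Z|\le n^{1-\gamma}$, the number of candidate partitions is at most $2^{O(n^{1-\gamma}\log n)}$, which will be absorbed. For each fixed partition: the edges inside $X_i$ for $i\le 3$ are forced, the structure on $X_4$ contributes at most $2^{|X_4|}$, the cross-edges contribute $2^{\sum_{i<j}|Y_i||Y_j|}$, and by condition (III) each $N(z)\cap Y_i$ lies in one of at most $b$ Hamming balls of radius $\delta n$, yielding at most $2^{O(H(\delta)\cdot n^{2-\gamma})}$ patterns for the within-$Y_i$ edges touching $Z$. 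This crude upper bound still exceeds the great count, and the decisive extra savings must come from mediocrity combined with the string-graph property. Since $G$ admits no great partition, every assignment of the $Z$-vertices to the five ``slots'' $X_1,X_2,X_3,C_1,C_2$ (together with small reshufflings of vertices inside the $X_i$'s) must fail; each such failure forces an explicit restriction on a $Z$-vertex's neighbourhood or on the cross-edges, and Corollary~\ref{thecor} --- which, because $G$ is a string graph, forbids certain five-part configurations --- rules out many ``almost-great'' patterns, in the spirit of Lemmas~\ref{firststruclema}--\ref{fourthstruclema}. Stacking these restrictions over the $z\in Z$ and the finitely many types of $B$ should cut the crude upper bound by a polynomial factor in $n$, giving $|\mathcal{F}_{\mathcal{Y}}|=o(|\mathrm{Great}(\mathcal{Y})|)$.

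The main obstacle is calibrating this combinatorial saving tightly enough: the slack to beat is roughly $2^{O(H(\delta)\cdot n^{2-\gamma})+|X_4|}$ from the $Z$-edges and from splitting $X_4$, against only $2^{|Y_4|-1}$ of extra room in the great count. Making this trade-off work will rely on the parameter inequalities already built into Theorem~\ref{Hstruc.thm} (with $\delta$ chosen small enough that $H(\delta)$ is tiny, $\gamma$ bounded well away from $1$, and $b$ finite) and on the observation that a ``type'' in $B$ whose Hamming ball is compatible with mediocrity is highly constrained, collapsing the effective number of patterns from $2^{H(\delta)n}$ down to $O(1)$ per $Z$-vertex.
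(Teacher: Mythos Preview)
Your computation of $|\mathrm{Great}(\mathcal{Y})|=2^{m+|Y_4|-1}$ (with $m=\sum_{i<j}|Y_i||Y_j|$) and your bound on the number of projections via condition (III) are both correct, and they match the paper's Lemma~\ref{projcount}. The overall framework --- projection count times cross-edge count, compared against the great count --- is also the paper's.

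The genuine gap is in your mechanism for the saving. You propose to recover the missing factor by arguing that mediocrity constrains each $Z$-vertex to one of $O(1)$ ``effective patterns'' inside its $Y_i$, collapsing the $2^{O(H(\delta)n^{2-\gamma})}$ projection term. This does not work: mediocrity is a global statement (no great partition exists \emph{anywhere} on $V_n$), and it cannot be localised to a per-vertex restriction on $N(z)\cap Y_i$. A single $z$ can have essentially any neighbourhood inside $Y_i$ and still belong to a mediocre graph; what obstructs greatness is the interaction of several such vertices with one another and with the cross-edges. Your earlier remark that the saving might be ``a polynomial factor in $n$'' is also inconsistent with the $2^{O(H(\delta)n^{2-\gamma})}$ slack you correctly identify.

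The paper obtains its saving in the \emph{opposite} slot: it keeps the crude projection bound (Lemma~\ref{projcount}) and shows instead that, for a fixed projection of a mediocre string graph, the number of admissible cross-edge configurations is only $2^{m-\Omega(n^2/\log^3 n)}$ rather than $2^m$. The mechanism is: because the projection is not already great, it contains a small obstruction $T$ (a $P_3$, a stable triple, or a vertex-plus-triangle, possibly spread over two $Y_i$'s); Corollary~\ref{thecor} supplies a non-string graph $J$ built from $T$ together with cliques of size $\le 5$ in the remaining $Y_j$'s. If $T$ is \emph{versatile} --- each $Y_j$ disjoint from $T$ contains a clique with $\frac{n}{\log n}$ vertices realising every prescribed adjacency pattern to $T$ --- then roughly $(n/\log n)^3$ disjoint clique-tuples can be planted, each killing one cross-edge configuration (Lemmas~\ref{firststruclemma} and~\ref{secondstruclemma}). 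The hard remainder is when no obstruction is versatile: one takes a maximum family $\mathcal{W}$ of disjoint obstructions, shows $|\mathcal{W}|$ is bounded for almost all such graphs (non-versatility of each $W\in\mathcal{W}$ already costs a factor $2^{-\Omega(n)}$ in the cross-edges), and then performs a delicate ``rank'' analysis of the vertices in $W^*=\bigcup\mathcal{W}$, moving each to a $Y_i$ on which it is extreme and either recovering a great partition (contradicting mediocrity) or finding a fresh forbidden configuration. None of this machinery --- versatility, the family $\mathcal{W}$, the rank of an obstruction vertex, the repartitioning of $W^*$ --- appears in your plan, and it is where essentially all of the work lies.
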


To complete the proof of Theorem~\ref{main} we need to  show that our two claims hold.

Before doing so, we deviate momentarily and discuss the speed of the string graphs.
Combining Theorem \ref{main} and Claim \ref{ourclaim}, we see that the  ratio of  the size of $|\textsc{String}_n|$
over the number of  ordered great partitions of graphs on $V_n$ is $\frac{1}{6}+o(1)$, so we need only count the latter.
There are $2^{2n}$  ordered partitions of $V_n$ into $Y_1,...,Y_4$, and there are $2^{m+|Y_4|}$ graphs for which this is
a great partition, where, as before, $m=m(Y_1,Y_2,Y_3,Y_4)$ is the number of pairs of vertices not lying together in some $Y_i$. This latter term is at most $2^{\frac{3 {n \choose 2}}{4}+\frac{n}{4}}$, which gives us the claimed upper bound on the
speed of string graphs. Furthermore, a simple calculation of the $2^{2n}$ ordered 4-partitions of $V_n$ shows that there is an $\Omega  (\frac{1}{n^\frac{3}{2}})$
proportion where no two parts differ in size by more than one. This gives us the claimed lower bound.

We now  prove our two claims.
In proving both, we exploit the fact that if a string graph has a great partition and we fix the subgraph induced by the parts of the partition, then any choice we make for the edges between the sets $X_i$ will yield another string graph permitting the same  great partition.

This fact implies that the edge arrangements between the partition elements of a graph permitting a particular great partition are chosen uniformly at random and, hence, are unlikely to lead to a graph permitting some other great partition.
This allows us to prove Claim \ref{ourclaim}, which we do  in the appendix.

\begin{proof}[\bf Proof  ~of  Claim \ref{ourotherclaim}:]

Let $m$ be the number of
pairs of vertices not contained in a  partition element and note that  there are exactly   $(2^{|Y_4|-1})$  choices for $G[Y_4]$ for a graph for
which ${\cal Y}$ is a great partition, and hence $2^m(2^{|Y_4|-1})$   graphs for which ${\cal Y}$ is a great partition.  

Our approach is to show that while there may be more choices for the $G[Y_i]$ for mediocre graphs for which ${\cal Y}$ is a good partition,
for each such choice we have many fewer than $2^m$ choices for mediocre string graphs extending these subgraphs.

We note that by the definition of good, we need only consider partitions such that each $Y_i$ has size $\frac{n}{4}+o(n)$.

Let $G\in {\cal F}$ and let $P(G)$ be the projection of $G$ on the sets $(Y_1,Y_2,Y_3,Y_4)$, that is, the disjoint union of the sets $G[Y_1],G[Y_2],G[Y_3]$, and $G[Y_4]$. 

Now, (I) of Theorem \ref{Hstruc.thm} bounds the number of choices for $G[Y_i]$ by 1 if $i<3$ and $2^{|Y_4|}$ if $i=4$.
Furthermore, (III) bounds the number of edges out of $Z_i$ in terms of its size and (II) bounds its size. Putting this all together we obtain the following lemma. Its proof can be found in the appendix.

\begin{lemma}\label{projcount}
Let $(Y_1,Y_2,Y_3,Y_4)$ be a partition of $V_n$, the number of possible projections on $(Y_1,Y_2,Y_3,Y_4)$ of graphs in ${\cal F}$ is $o(2^{nb+1+\sqrt{\delta}n|Z|})=o(2^{|Y_4|-1}\cdot 2^{\sqrt{\delta}n^{2-\gamma}}).$
\end{lemma}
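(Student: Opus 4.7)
The plan is to overcount projections by summing, over all choices of a certifying refinement $(X_i,Z_i)_{i=1}^{4}$ of $(Y_1,\ldots,Y_4)$, a template set $B$ with $|B|\le b$, and a template map $a\colon V_n\to B$ witnessing property (III), the number of projections compatible with these auxiliary data. Since every graph in ${\cal F}$ admits such data by the definition of a ${\cal Y}$-good partition, this yields a valid upper bound on $|\{P(G):G\in{\cal F}\}|$.

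First, I bound the number of auxiliary data choices. Property (II) gives $|Z|\le n^{1-\gamma}$, so the number of ways to pick the $Z_i\subseteq Y_i$ is at most $\binom{n}{\le n^{1-\gamma}}^{4}=2^{o(n)}$. Choosing $B$ costs at most $\binom{n}{\le b}=n^{O(1)}=2^{o(n)}$, and the template map costs $b^{n}=2^{n\log_{2}b}$. Altogether, the auxiliary data contributes a factor of $2^{O(n)}$, the implicit constant depending only on $b$.

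Next, I fix the auxiliary data and count compatible projections. By (I), $G[X_i]$ is the unique clique on $X_i$ for $i\le 3$, and $G[X_4]$ is a disjoint union of two cliques, specified by an unordered bipartition of $X_4$, contributing at most $2^{|X_4|-1}\le 2^{|Y_4|-1}$ choices. Every remaining edge of the projection is incident to some $z\in Z=Z_1\cup\cdots\cup Z_4$, so the projection is pinned down once each $N(z)\cap Y_i$ is chosen. By (III), $N(z)\cap Y_i$ differs from the fixed set $N(a_z)\cap Y_i$ in at most $\delta n$ vertices, giving at most $\sum_{k\le\delta n}\binom{|Y_i|}{k}\le 2^{H(\delta)n}$ admissible choices per $z$, where $H(\cdot)$ is the binary entropy. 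Multiplying over $z\in Z$ yields at most $2^{|Y_4|-1}\cdot 2^{H(\delta)n|Z|}$ projections per auxiliary datum.

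Combining, the total number of projections is at most $2^{O(n)}\cdot 2^{|Y_4|-1}\cdot 2^{H(\delta)n|Z|}$. Since $H(\delta)/\sqrt{\delta}\to 0$ as $\delta\to 0$, we may shrink $\delta$ so that $H(\delta)\le\tfrac{1}{2}\sqrt{\delta}$, whence $H(\delta)n|Z|\le\tfrac{1}{2}\sqrt{\delta}\,n^{2-\gamma}$. Because $\gamma<1$, the overhead $2^{O(n)}$ is absorbed by $2^{(\sqrt{\delta}/2)n^{2-\gamma}}$ for large $n$, giving the bound $o(2^{|Y_4|-1}\cdot 2^{\sqrt{\delta}n^{2-\gamma}})$. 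The main subtlety is ensuring that the auxiliary-data overhead---dominated by the $b^{n}$ template assignments---fits within the entropic slack $2^{(\sqrt{\delta}-H(\delta))n|Z|}$; this follows from the estimate $H(\delta)=o(\sqrt{\delta})$ once $\delta$ is chosen sufficiently small.
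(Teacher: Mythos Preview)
Your overall strategy—fix the certifying data from property~(III) and then count projections as small perturbations of template neighbourhoods—is exactly the paper's approach. But there is a real gap in the step where you write ``$N(z)\cap Y_i$ differs from the \emph{fixed} set $N(a_z)\cap Y_i$.'' Your auxiliary data consist only of the sets $Z_i$, the vertex set $B\subseteq V_n$, and the assignment $v\mapsto a_v$. None of this determines $N(a_z)\cap Y_i$: that set depends on the graph $G$, which is precisely what you are counting. If $a_z\in Y_j$ with $j\neq i$, then $N(a_z)\cap Y_i$ consists entirely of edges \emph{between} parts and is not even encoded in the projection $P(G)$; and if $a_z\in Y_i$, then $N(a_z)\cap Y_i$ is among the quantities you are in the process of specifying. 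In either case the ``reference set'' is not fixed, so the perturbation count $\sum_{k\le\delta n}\binom{|Y_i|}{k}$ does not by itself pin down $N(z)\cap Y_i$.

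The repair is immediate and is exactly what the paper does: instead of recording $B$ as a subset of $V_n$, record $b$ abstract \emph{template neighbourhoods}, i.e.\ $b$ arbitrary subsets of $V_n$, at cost $(2^n)^b=2^{nb}$; then assign each $z\in Z$ to one of these templates (cost $b^{|Z|}$) and specify the symmetric difference of size at most $\delta n$. This is the origin of the $2^{nb}$ term that appears explicitly in the lemma's first bound. Since $b$ is a constant, $2^{nb}$ is still $2^{O(n)}$ and your concluding arithmetic (absorbing the $2^{O(n)}$ overhead into $2^{\sqrt{\delta}\,n^{2-\gamma}}$ via $H(\delta)=o(\sqrt{\delta})$) goes through unchanged.
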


For a mediocre graph $G$ in ${\cal F}$, we call a set $D$ \textit{versatile} if for each  $i\in [4]$ with $Y_i\cap D=\emptyset$, there is  clique $C_i$  in $Y_i$  such that for all subsets  $D'$ of $D$ there are  $\frac{n}{\log n}$ vertices of $C_i$ which are adjacent to all elements of $D'$ and to none of $D\setminus D'$.

\begin{lemma}
\label{firststruclemma}
The number of mediocre string graphs in ${\cal F}$ such that for some $i$ there is a  versatile subset $T_i$
 of  3 vertices of $Y_i$ inducing a path  or a stable set of size three,is $o(2^m)$.
\end{lemma}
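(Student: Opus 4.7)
The plan is to leverage the strong structural consequences of versatility to produce many independent ``non-string tests'' of the form guaranteed by Corollary~\ref{thecor}, and then to show that only a tiny fraction of edge configurations can escape all of them. Concretely, fix a candidate triple $T_i \subseteq Y_i$ of three vertices inducing a stable set or a path in $G$. Since $G[X_j]$ is a clique (or, for $j=4$, the disjoint union of two cliques) by~(I) of Theorem~\ref{Hstruc.thm}, such a $T_i$ necessarily meets $Z_i$, so the total number of candidates, summed over the four choices of $i$, is $O(|Z|\cdot n^2) = O(n^{3-\gamma})$.

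For a fixed versatile $T_i$, Corollary~\ref{thecor}(c) (if $T_i$ is stable) or~(d) (if it is a path) yields a non-string graph $H$ on $15$ vertices whose vertex set partitions as $T_i \cup Q_1 \cup Q_2 \cup Q_3$, with $Q_1, Q_2, Q_3$ cliques of sizes $5$, $4$, $3$ carrying prescribed neighborhoods into $T_i$. By Lemma~\ref{non-string}, among the $47$ inter-$Q_k$ edges exactly $29$ are forced to specific values in $H$, the remaining $18$ being optional and $H$ remaining non-string for any of their assignments. Versatility provides, for each $j \ne i$, a clique $C_j \subseteq Y_j$ with at least $n/\log n$ vertices of each of the eight neighborhood types $D' \subseteq T_i$. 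Fixing any labeling $(j_1, j_2, j_3)$ of $\{1,2,3,4\}\setminus\{i\}$, each choice of vertex-disjoint $Q'_k \subseteq C_{j_k}$ of the correct sizes and types yields a candidate induced embedding of $H$: the $36$ edges between $T_i$ and the $Q'_k$'s are determined correctly by the chosen types, the intra-$Q'_k$ edges hold because $C_{j_k}$ is a clique, and only the $29$ critical inter-$Q'_k$ edges remain to be checked against $H$.

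The main step is a greedy packing argument. Any given inter-$C_{j_k}$ edge lies in only $\Theta((n/\log n)^{10})$ of the $\Theta((n/\log n)^{12})$ embeddings (once its two endpoints are fixed, the remaining ten type-constrained slots are free), so a greedy selection extracts $L = \Omega(n^2/\log^2 n)$ embeddings whose $29$-element critical edge sets are pairwise disjoint. Because these critical sets lie entirely in the free edges of $G$ and are mutually disjoint, under the uniform counting distribution on $2^m$ free-edge configurations the events ``the $\ell$-th embedding realizes $H$'' are independent, each of probability $2^{-29}$. Since $G$ being a string graph requires none of them to realize $H$, at most $2^m\cdot(1-2^{-29})^L = 2^m \cdot \exp\bigl(-\Omega(n^2/\log^2 n)\bigr)$ configurations survive.

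Multiplying by the $O(n^{3-\gamma})$ choices of $T_i$ and by the projection bound $o\bigl(2^{|Y_4|-1}\cdot 2^{\sqrt{\delta} n^{2-\gamma}}\bigr)$ from Lemma~\ref{projcount}, the total count of bad graphs is at most $2^m \cdot 2^{O(n) + \sqrt{\delta} n^{2-\gamma} + O(\log n)} \cdot \exp\bigl(-\Omega(n^2/\log^2 n)\bigr) = o(2^m)$, because for any fixed $\gamma > 0$ the saving $\exp(-\Omega(n^2/\log^2 n))$ eventually overwhelms all other factors. The hard part is the edge-disjoint packing step; it is precisely here that the quantitative strength of versatility, namely the $n/\log n$ lower bound per neighborhood type, is used essentially, ensuring that the embeddings are spread out enough across the inter-$C_{j_k}$ edges to admit a packing of near-optimal size.
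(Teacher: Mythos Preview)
Your argument follows the same strategy as the paper: fix a projection and a candidate $T_i$, use versatility to locate, in each $C_k$, many vertices with each prescribed neighbourhood on $T_i$, build a large edge-disjoint family of potential copies of the forbidden configuration from Corollary~\ref{thecor}(c)/(d), and conclude that at most a $2^{-\Omega(n^2/\mathrm{polylog}\,n)}$ fraction of the cross-edge configurations can avoid all of them, which swamps the projection bound from Lemma~\ref{projcount}. The only real difference is in how the edge-disjoint tests are produced: the paper first packs each $C_k$ with $n'=\lceil n/(10\log n)\rceil$ disjoint small cliques of the right type profile and then (via ``mimicking an earlier argument'') uses the prime-based design from the proof of Lemma~\ref{secondstruclema} to select $\Theta((n')^2)$ triples with pairwise edge-disjoint cross parts, whereas you do a direct greedy packing over all embeddings. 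Both yield $\Omega(n^2/\mathrm{polylog}\,n)$ independent tests, so the two routes are interchangeable.

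One point to fix: the specific numerics you assert are not supported. Corollary~\ref{thecor}(c),(d) only says the three cliques have size \emph{at most five}; nothing forces the sizes $5,4,3$, and the claim that ``exactly $29$ of the $47$ inter-$Q_k$ edges are forced and $18$ are optional by Lemma~\ref{nonstring}'' does not follow from that lemma (the optional pairs there are those of the form $v_{ij}v_{ik}$, and how many of them fall between your $Q_k$'s depends on the unspecified partition). Also, you only know $|Z_v|\ge n/\log n$, not $|Z_v|=\Theta(n/\log n)$, so the per-edge embedding count is at most $n^{10}$ rather than $(n/\log n)^{10}$. None of this harms the argument: replace $2^{-29}$ by $2^{-c}$ for an absolute constant $c$ and the packing by $L=\Omega(n^2/\log^{O(1)}n)$, and everything goes through. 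You should also make explicit that the edges from $T_i$ to the other $Y_j$ are exposed first (to certify versatility and fix the types), so that the inter-$Q'_k$ edges used in the independence step are genuinely disjoint from what has been conditioned on; the paper spells this out, your write-up leaves it implicit.
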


\begin{proof}
To begin, we count the number of mediocre graphs which extend a  given projection on $(Y_1,Y_2,Y_3,Y_4)$ where $T_i$ induces such a graph. We first  expose the edges from $Y_i$ to determine if  $T_i$ is versatile and then count the number of choices for the remaining edges between the partition elements. If $T_i$ is versatile we choose cliques  $C_k$ which show this is the case. 

By  Corollary \ref{thecor} (c) or (d), there is a non-string graph
$J$ whose vertex set can be partitioned into 3 cliques of size at most five, and a graph $J_i$  isomorphic to 
the subgraph  of the projection induced  by $T_i$. We label these three cliques as $J_k$ for $k \in \{1,2,3,4\}-\{i\}$
and let $f$ be an isomorphism from $J_i$ to $T_i$.  
For each vertex $v \in V(J_k)$, let $N(v)=f(N_J(v)\cap V(J_i))$ and $Z_v$ be those vertices of $C_k$ whose 
neighbourhhod on $T_i$ is $N(v)$. 
  Now, since  $|Z_v| \ge \frac{n}{ \log n}$ for all $v$ in each $V(J_k)$,   for each $k \neq i$, we can choose $n'= \lceil \frac{n}{10 \log n} \rceil $ cliques of size at most five $C^k_1,...,C^k_{n'}$
such that there is bijection $h_{k,l}$ from $J_k$ to $C^k_l$ with $h_{k,l}(v) \in Z_v$ for every $v \in J_k$.

If we choose our cliques in this way then for any set of three cliques $\{C^k_{i(k)} |k \neq i\}$ there is a choice of edges between the cliques which would make
the union of these three cliques with $T_i$ induce $J$. Thus, there is one choice of edges between the cliques which cannot be used in any extension of $H$
to a string graph. Mimicking an earlier argument, this implies that the number of choices for edges between the partition elements which extend $H$ to a string graph
is at most $2^{m-\frac{n^2}{\log^3n}}$. 
By the bound in Lemma \ref{projcount} on the number of possible projections, the desired result follows.
\end{proof}

 Using Corollary \ref{thecor}
(e) in places of (c) \& (d), we can ( and do in the appendix)  prove an analogous result for sets of size 8  intersecting two partition elements. To state it we need a definition. A graph $J$  is {\it extendible} if  there is some non-string graph  whose vertex set  can be partitioned into two cliques of size five and a set inducing $J$.  

\begin{lemma}
\label{secondstruclemma}
The number of mediocre string graphs in ${\cal F}$ such that for some distinct $i$ and $k$ there are subsets $T_i$ of $Y_i$ and $T_k$ of $Y_k$, both of size four, whose union is  both versatile and  induces
an extendible  graph  is $o(2^m)$.
\end{lemma}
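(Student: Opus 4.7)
The plan is to mirror the proof of Lemma \ref{firststruclemma}, substituting Corollary \ref{thecor}(e) for (c) and (d). Fix a partition $\mathcal{Y}=(Y_1,Y_2,Y_3,Y_4)$, indices $i\ne k$, and size-four sets $T_i\subset Y_i$, $T_k\subset Y_k$. Suppose $G\in\mathcal{F}$ is a mediocre string graph in which $T:=T_i\cup T_k$ is versatile and $G[T]$ is extendible, witnessed by a non-string graph $J^*$ whose vertex set partitions as $V(J^*_{ik})\sqcup V(J^*_{l_1})\sqcup V(J^*_{l_2})$, where $J^*_{ik}\cong G[T]$, each $J^*_{l_r}$ is a clique of size five, and $\{l_1,l_2\}=\{1,2,3,4\}\setminus\{i,k\}$. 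Versatility then gives cliques $C_{l_r}\subset Y_{l_r}$ inside which every one of the $2^8$ neighborhood types on $T$ is attained by at least $n/\log n$ vertices.

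From each $C_{l_r}$ I would greedily extract $n':=\lceil n/(10\log n)\rceil$ vertex-disjoint five-cliques $C^{l_r}_1,\ldots,C^{l_r}_{n'}$, each equipped with a bijection to $J^*_{l_r}$ under which every vertex inherits exactly the neighborhood on $T$ prescribed for its image. The greedy succeeds because each of the (at most five) required types has $\ge n/\log n$ candidates in $C_{l_r}$, while any selection consumes at most $5n'=n/(2\log n)$ vertices altogether. For each pair $(a,b)\in[n']^2$, exactly one of the $2^{25}$ configurations of the $5\times 5$ edges between $C^{l_1}_a$ and $C^{l_2}_b$ would, together with the already-fixed intra-clique edges, the cross-edges to $T$, and the $G[T]$ edges furnished by the projection $P$, complete an induced copy of the non-string graph $J^*$; that configuration is therefore forbidden in every string-graph extension. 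The crucial observation is that the $C^{l_r}_a$ are pairwise disjoint inside $Y_{l_r}$, so the $(n')^2$ forbidden 25-edge blocks between $Y_{l_1}$ and $Y_{l_2}$ are pairwise edge-disjoint. Conditioning on the cross-part edges from $Y_{l_r}$ to $T$ (which fix the extracted cliques), the block edges may then be chosen in at most $(2^{25}-1)^{(n')^2}$ ways, a factor of $(1-2^{-25})^{(n')^2}=2^{-\Omega(n^2/\log^2 n)}$ below the unconstrained count.

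Consequently, the number of graphs extending a fixed $P$ through a fixed witness is at most $2^m\cdot 2^{-\Omega(n^2/\log^2 n)}$; summing over the $\mathrm{poly}(n)$ choices of $(i,k,T_i,T_k)$ and, via Lemma \ref{projcount}, over the $o(2^{|Y_4|-1+\sqrt{\delta}\,n^{2-\gamma}})$ projections yields $o(2^m\cdot 2^{|Y_4|+\sqrt{\delta}\,n^{2-\gamma}-c\,n^2/\log^2 n})=o(2^m)$, since $n^2/\log^2 n$ dominates both $n$ and $n^{2-\gamma}$. The step I expect to warrant the most care is the conditioning argument: because the cliques $C^{l_r}_a$ themselves depend on $G$ (through the cross-part edges from $Y_{l_r}$ to $T$), one has to split the $m$ cross-part edges into a ``neighborhood-to-$T$'' block (which determines the $C^{l_r}_a$), a $Y_{l_1}$-to-$Y_{l_2}$ ``block-edge'' block (constrained by the forbidden patterns), and a residual block (counted freely), and argue that this conditioning legitimately compounds the extinction across all $(n')^2$ disjoint pairs.
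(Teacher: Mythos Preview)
Your argument is correct and follows essentially the same route as the paper: expose the edges that determine versatility, greedily extract $n'=\lceil n/(10\log n)\rceil$ disjoint five-cliques in each of $Y_{l_1},Y_{l_2}$ with prescribed neighbourhood types on $T$, and use the $(n')^2$ pairwise edge-disjoint forbidden $5\times 5$ blocks to cut the extension count by $2^{-\Omega(n^2/\log^2 n)}$ (the paper records the weaker $2^{-n^2/\log^3 n}$, which also suffices against the projection bound from Lemma~\ref{projcount}).

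One small slip worth fixing: you write that ``the $G[T]$ edges [are] furnished by the projection $P$'', but the projection records only the $G[Y_j]$, so it gives you $G[T_i]$ and $G[T_k]$ but \emph{not} the sixteen cross-edges between $T_i$ and $T_k$. These are needed both to know whether $G[T]$ is extendible and to pin down the isomorphism type of $J^*_{ik}$. The paper handles this by first exposing all edges out of $Y_i\cup Y_k$ (hence in particular the $T_i$--$T_k$ edges) before counting the remaining $Y_{l_1}$--$Y_{l_2}$ edges. In your three-block decomposition at the end, simply add a constant-size block for the $T_i$--$T_k$ edges (or absorb them into the ``neighbourhood-to-$T$'' block); the extra factor of at most $2^{16}$ is swallowed by the $2^{-\Omega(n^2/\log^2 n)}$ saving and nothing else changes.
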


For every mediocre string graph $G$  in ${\cal F}$, we choose a maximum family ${\cal W}={\cal W}_G$ of disjoint sets each of which is either (a) contained in some $Y_i$ and induces one of a stable set of size three or a path of length three,
or (b)  contains exactly four vertices from each of  two distinct partition elements  and  is extendible.
For every such choice we count the number of elements of ${\cal F}$ whose projection yields the 
given choice of ${\cal W}$.
 
Now, by the definition  of a good partition, each $Y_k$ contains a clique $C_k$ containing half the vertices of 
$X_k$ and hence at least $\frac{n}{10}$ vertices.  Lemmas \ref{firststruclemma} and \ref{secondstruclemma}  imply that  we can restrict our attention to graphs  for which for any subset $T$ in ${\cal W}$,
 there is a subset $N$ of $T$ and a $j $ with $Y_j$ disjoint from $T$  such that there are fewer than 
$\frac{n}{log n}$ vertices of $C_k$ which are adjacent to all of $N$ and none of $T-N$. 
This implies that  the number of choices for the edges 
from $T$ to other partition elements is $o(2^{\frac{3n|T|}{4}-\frac{n}{10000}})$.

Every element of ${\cal W}$ must intersect $Z$, so that $|{\cal W}| \le  |Z|$.  Set $W^*=\cup_{W \in {\cal W}} W$, and  let $Y'_i=Y_i-W^*$. Note  that for every $i$, $Y'_i$ has more than $\frac{n}{5}$ vertices and $G[Y'_i]$ is the disjoint union of two cliques.
Given a choice of ${\cal W}$, the number of choices for projections on $V_n\setminus W^*$ is less than $2^n$.
Mimicking the proof of Lemma \ref{projcount}, the number of choices for the vertices of $W^*$, and the edges of $G$ from the vertices in $W^*$ which remain within the partition elements of ${\cal Y}$  is $O(2^{bn+ \sqrt{\delta} |W^*|n})$. 
Combining this with the result of the  last  paragraph yields:  

\begin{lemma}
There is a constant $C$ such that the number of mediocre string graphs in ${\cal F}$  for which  $|{\cal W}|>C$ is $o(2^{m+|Y_4|})$.
\end{lemma}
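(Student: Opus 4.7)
The plan is to fix $w$ and bound the number of mediocre string graphs $G \in {\cal F}$ with $|{\cal W}_G| = w$, then sum over $w > C$ for a suitable constant $C$. To specify such a $G$, I would proceed in three stages: (i) choose $W^* = \bigcup_{W \in {\cal W}_G} W$ together with all edges of $G$ inside the partition elements of ${\cal Y}$ incident to $W^*$; (ii) choose the projection on $V_n \setminus W^*$, i.e.\ the subgraphs $G[Y'_i]$; (iii) choose the edges of $G$ between distinct partition elements of ${\cal Y}$. Stage (i) is bounded by $O(2^{bn + \sqrt{\delta}|W^*| n})$ via the paragraph preceding the lemma, and stage (ii) contributes at most $2^n$ since each $G[Y'_i]$ is a disjoint union of at most two cliques. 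For stage (iii), the unconstrained bound is $2^m$, but the preceding paragraph shows that each $T \in {\cal W}_G$ rules out all but a $2^{-n/10000}$ fraction of the configurations of edges from $T$ to other partition elements; combining these savings across the $w$ vertex-disjoint $T$'s gives at most $2^{m - wn/20000}$, where the factor of two in the exponent is slack absorbing the small overlap among scopes discussed below.

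Multiplying the three counts and using $|W^*| \le 8w$ gives at most $2^{m+(b+1)n + 8\sqrt{\delta}wn - wn/20000}$ mediocre string graphs with $|{\cal W}_G| = w$. Choosing $\delta$ small enough that $8\sqrt{\delta} < 1/40000$ collapses this to $2^{m + (b+1)n - wn/40000}$. Summing over $w$ in the range $C < w \le |Z| \le n^{1-\gamma}$ and using $|Y_4| = n/4 + o(n)$ from Theorem~\ref{Hstruc.thm}(IV), this total is $o(2^{m + |Y_4|})$ whenever $C > 40000(b + 1)$, so taking $C = 40000(b+1) + 1$ works (and is larger than $4 + 2/\delta$, as required implicitly).

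The main technical point is justifying the combined savings $2^{-wn/20000}$ in stage (iii): the per-$T$ bound concerns edges incident to a single $T$, and although the $T$'s are pairwise vertex-disjoint, an edge $\{u,v\}$ with $u \in T$ and $v \in T'$ for distinct $T, T' \in {\cal W}_G$ belongs to the scope of both. The resolution is that the forbidden edge configurations in the proof of the per-$T$ bound can be chosen inside specific small cliques $C^k_l \subset C_k$ outside $W^*$; for different $T, T'$ these cliques can be chosen disjoint, since each $T$ uses only $\lceil n/(10\log n)\rceil$ of them while $|C_k| = \Omega(n)$. Hence the forbidden configurations for different $T$'s live on disjoint edge sets, the per-$T$ constraints are genuinely independent, and the savings multiply; the residual shared edges (at most $|T|(|W^*|-|T|) = O(w) = o(n)$ per $T$) contribute only $o(wn)$ to the exponent and are easily absorbed into the constant.
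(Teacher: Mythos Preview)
Your three–stage decomposition and the arithmetic that follows are exactly what the paper does; the paper merely writes ``Combining this with the result of the last paragraph yields'' the lemma, and you have correctly spelled out the combination.

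One point in your final paragraph deserves correction. The per-$T$ saving of $2^{-n/10000}$ here does \emph{not} come from forbidden configurations inside small cliques $C^k_l$ --- that is the mechanism of Lemmas~\ref{firststruclemma} and~\ref{secondstruclemma}, which handle the \emph{versatile} case and have already been used to discard those graphs. At this point we are counting graphs in which every $T\in{\cal W}$ is \emph{non}-versatile, and the saving for a fixed $T$ is the large-deviation constraint that for some pair $(j,N)$ fewer than $n/\log n$ vertices of $C_j$ have neighbourhood exactly $N$ on $T$. To make these constraints independent across the $T$'s, the right move is the one you hint at with ``outside $W^*$'': replace $C_j$ by $C_j\setminus W^*$, which still has at least $n/11$ vertices since $|W^*|\le 8|Z|=o(n)$, so the per-$T$ saving survives. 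Because each $T\subset W^*$, the edge sets between $T$ and $C_{j_T}\setminus W^*$ are genuinely pairwise disjoint as $T$ ranges over ${\cal W}$; after a union bound over the at most $(4\cdot 2^8)^w$ choices of $(j_T,N_T)$, the savings multiply to $2^{-\Omega(wn)}$ as you need. With this correction your argument is complete and matches the paper.
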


So, we can restrict our attention to mediocre  graphs which have a partition  for which $|{\cal W}| \le C$. Similar tradeoffs allow us to handle them. Full details are found in the Appendix. 
 \end{proof}

\section{Acknowledgement}

This research was carried out while all three authors were visiting IMPA in Rio de Janeiro.
They would like to thank the institute for its generous support.

\appendix
\section{The Appendix}

\subsection{Sketch of the proof of Theorem~\ref{abbm.cor}}\label{sketch}

\begin{proof}
We only need to prove this result for $\delta$ sufficiently small as it then follows for all $\delta$.
We will set $\delta$ to be $3\alpha$  for some $\alpha$ which is required to be sufficently small. So, we can and do  replace $\delta$ by $3 \alpha$
in what follows.
We essentially follow the \cite{AlBBM11} proof of their Theorem 1  given in Section 7 of their paper.
We note that  our statement  differs from their
statement in the following ways (i) for us  the hereditary family ${\cal P}$ is the family of string graphs hence, as Pach and Toth proved $\chi_c(P)=4$,  (ii)
we allow $k$ to be any large enough integer rather than one fixed large integer, (ii) we allow $\alpha$ to be arbitrarilly small as long as it is small enough
in terms of $k$( and ${\cal P}$), (iii) $\epsilon$ is chosen as a function of $\alpha$ and $k$,  (iv) there is an integer $b$  which is  chosen as a function of
$\alpha$ and $k$ such that there is a choice $B$ of at most $b$ vertices and a partition of $A$ into $A_1,A_2,A_3,A_4$ for which our property (c) holds, and (v) the sentence beginning {\it Moreover} is deleted.    We will not reproduce the entire proof. We simply set out  the very minor modifications
these changes require.

  We want  to use the  strengthening  of their Lemma 23 obtained by replacing {\it and  $\alpha=\alpha(k,{\cal P})>0$ such that} in its statement with  {\it such that for any
$\alpha$ sufficiently small in terms of $k$ and ${\cal P}$}, and
(iii) replacing {\it $|B|$ with}  in  the definition of  $U(P_n,\alpha,k)$ just before the statement of Lemma 23  by
{\it  $|B|$ with  $|B| >  c(\alpha,{\cal P})$ for the $c$ of Lemma 18 or}.
 Their proof of the lemma actually proves this strengthening,  provided that  (a) in the first paragraph we set out that $c$ is the
$c(\alpha,{\cal P})$ of Lemma 18, (b) replace  $n^{1-2\alpha}$ by $c$ in the definition of $U_n$
given on its fourth line, and (iii) delete {\it if $c=c(\alpha,{\cal P})$ is sufficeintly large}.

Now while  following their (three paragraph) proof of their Theorem 1, we again replace $\alpha=\alpha(k, {\cal P})$ by $\alpha>0$ sufficiently small in terms of $k$ and ${\cal P}$,
 and  insist $\epsilon, \delta$ and $\gamma$ are sufficiently small in terms of both these parameters. Furthermore,
we define $c$ to be the $c(\alpha, {\cal P})$ of Lemma 18.
We also add {\it and $|B| \le c$}  at the end of the second paragraph before {\it for almost every}.

Then we consider the adjustment $S^\prime_1,...,S^\prime_r$ and exceptional set $A$ they obtain  and set  $A_i=S^\prime_i \cap A, S_i=S_i^\prime-A$. Now, as in their proof, consider a maximal $2\alpha$ bad set
$B$.  By our strengthened version of Lemma 23 the size of $B$ is at most  $c$ We set $b$ to be this $c$.
Now, (a) is their Theorem 1(b), (b) is their Theorem 1 (a) where $\epsilon$ is $\frac{\alpha}{2}$, and (c) follows immediately from the fact that
$S^\prime_1,...,S^\prime_4$ is an $\alpha$-adjustment and the definition of $\gamma$-adjustment.

\end{proof}

\subsection{The Proof of Lemma \ref{nonstring}}

\begin{proof}
Suppose for contradiction that $H$ has a string representation. Continuously contract each of string curve representing $v_{i}\; (1\le i\le 5)$ to a point $p_i$, without changing the intersection pattern of the curves. For every pair $i\neq j,$ consider  some   non-self intersecting arc  of the curve representing $v_{ij}$ with endpoints  $p_i$ and $p_j$. These arcs define a drawing of $K_5$, in which no two independent edges intersect. However, $K_5$ is not a planar graph, hence, by a well known theorem of Hanani and Tutte
\cite{Ch34}, \cite{Tu70}, no such drawing exists.
\end{proof}

\subsection{The Proof of Lemma \ref{countingclaim}}

\begin{proof}
For any partition of $V_n$ into  four sets $S_1,S_2,S_3,S_4$, each of size between $\frac{n-3}{4}$ and  $\frac{n+3}{4}$, there are at least $2^\frac{3{n \choose 2}}{4}$ string graphs on $n$ vertices in which the partition elements form cliques. We note these graphs are in ${\cal S}(k,\delta)_n$ with  the $A_i$ empty and $B$ containing one vertex from each clique.
So $|{\cal S}(k,\delta)_n| \ge 2^\frac{3{n \choose 2}}{4}$.
\end{proof}

\subsection{The Proof of Lemma \ref{secondstruclema}}

\begin{proof}

By Lemmas \ref{deviationlema} and \ref{firststruclema}, it is enough to consider graphs in ${\cal S}(k,\delta)_n$ with respect to which every $S_i$ contains more than $\frac{n}{5}$ vertices and there are no two distinct $k\neq t$ such that $S_k$ and $S_t$ contain $l$ disjoint stable sets of size $10$.


By Ramsey theorem, every set of $2^{15}$ vertices in any $S_j$ contains either a clique of size $5$ or stable sets of size $10$.
By our assumption $S_i$ does not contain $l$ disjoint cliques of size $5$, therefore for large enough $n$ it must contain $l$ disjoint stable sets $Z^i_1,...,Z^i_l$ of size $10$. Therefore, for all $j\neq i$, $S_j$ does not contain $l$ disjoint stable sets of size $10$, and, hence, it contains a set $Z^j_1,...,Z^j_l$ of $l$ cliques of size $5$.

By Corollary \ref{thecor}, for  the union of  any one of the independent sets  in $S_i$ and a clique of size five from each of the other $S_j$, there is  choice of  edges  between the partition element which  extends these stable sets and cliques to   a non-string graph induced by the 25 vertices. Now,  for some prime $p$ between $\frac{l}{2}$ and $l$,   we consider $p^2$ such unions   given by, for each $1 \le r,s\le p$: $Z^1_r,Z^2_{r+s},Z^3_{r+2s},Z^4_{r+3s}$ (where addition is modulo $p$).  We see that the number of choices for edges between the partition elements which gives a string graph, given the choices for the edges within is at most $2^{\frac{3{n \choose 2}}{4}} (1-\frac{1}{2^{225}})^{p^2}$.
The desired result follows.
\end{proof}

\subsection{The Proof of Claim \ref{ourclaim}}

\begin{proof}
To prove our claim, we focus on  graph--great partition pairs $(G,(X_1,X_2,X_3,X_4))$, that is, where the partition $(X_1,X_2,X_3,X_4)$  is a great partition of $G$ with the following property:
\vskip0.12cm

(P*)
\begin{enumerate}
\item[(a)] any two vertices of $G$ in the same partition element $X_i$ which forms a clique, have at least
$\frac{13n}{32}$ common neighbours;
\item[(b)] two vertices in different  partition elements  have fewer than
$\frac{13n}{32 }$ common neighbours;
\item[(c)]  for every partition element $X_i$ and every vertex $v$ not in $X_i$, $v$ forms a path of length three with two vertices of $X_i$; and
\item[(d)] $X_4$ does not induce a clique.
\end{enumerate}

Clearly, every great graph has at least six great partitions obtained by permuting the indices of the partition
elements.
We show now that  (i) every graph on $V_n$  has  at most six great  partitions   satisfying   (P*), and  (ii) almost every graph--great partition pair  on $V_n$  satisfies (P*). These two statements  prove our claim.

To prove (i), we assume that  $\{X_1,X_2,X_3,X_4\}$ and $\{X'_1,X'_2,X'_3,X'_4\}$
are two great partitions of a graph $G$, both of which satisfy property (P*). Clearly,  (a) and (b) tell us that for $i\le 3$, $X_i$ is contained in some $X'_j$.
Now, (c) tells us that each such $X_i$ is, in fact, nonempty and equal to some  $X_j$.
Hence, the set of partition elements is the same. Therefore, by (d), $X'_4=X_4$ and (i) follows.

It remains to show (ii). For any (ordered) partition ${\cal X}=X_1,X_2,X_3,X_4$  of $V_n$,
let $C_1=C_1({\cal X})$ be all choices of edges  within the partition elements which result in this partition being great. As before, let $m=m({\cal X})$ be the number of pairs of vertices not lying in a
partition element.

There are $|C_1|2^m$ graphs for which this partition is great, as we can pair any choice from $C_1$ with any choice
of edges between the partition  elements.  Furthermore, $C_1$ can be chosen by specifying a partition of $X_4$ into two disjoint cliques.  Thus, there is at least one and  at  most $2^{n-1}$ choices for $C_1$. Since  there are fewer than $4^n$ choices for ${\cal X}$ and $m$ decreases as the partition becomes more unbalanced,
for almost every graph-great partition pair we have that  for
each $i$, $|X_i|=\frac{n}{4}+o(n^\frac{2}{3})$ and we need only show that each fixed partition
having this property satisfies (P*) for almost  every graph for which it yields a great partition.

Since  we know that $|X_4|=\frac{n}{4}+o(n^\frac{2}{3})$, and almost every graph  on $n'$ vertices which is the disjoint union of two cliques is not a clique,  for almost every choice of the edges in $C_1$, for any choice of the edges  between the partition elements, we obtain a graph  satisfying (d).  We restrict our attention to the subset of $C_1$ for which (d) holds.

Now, we can choose a great graph extending this choice of $C_1$ uniformly
at random, by adding each edge joining vertices in different partition elements independently with probability $\frac{1}{2}$.

We observe that given a set of three vertices $u,v,w$ which is not contained in any $X_i$, the probability that $w$ is a common neighbour of $u$ and $v$ is at most $\frac{1}{2}$ if $w$ lies in the same partition element as
one of $u$ or $v$ and exactly $\frac{1}{4}$ otherwise. Taking into account the restriction on the choices in $C_1$ we consider, we obtain that the expected number of common neighbours of two vertices
is at most $\frac{1}{4} \cdot \frac{2n}{4}+ \frac{1}{2}\cdot\frac{2n}{4}+o(n)=\frac{3n}{8}+o(n)$ if they are in different partition elements, and at least
$\frac{n}{4}+ \frac{1}{4}\cdot\frac{3n}{4}+o(n)=\frac{7n}{16}+o(n)$ if they are in the same partition element which induces a clique. So,  for every choice in the subset of $C_1$ to which we have restricted ourselves,
 $n \choose 2$ applications of the Chernoff Bound, one for each pair of vertices, show that the proportion of
great graphs extending this partition on which  one of (P*)(a) or (P*)(b) fails is o(1).

In the same vein, consider an $X_i$ and a vertex $v$ outside of $X_i$. We partition $X_i$ into $\frac{|X_i|}{2}$ disjoint pairs of vertices.
For each pair, there is a choice of edges between this pair and $v$ for which these three vertices induce a path.
Thus, when we randomly construct a great graph extending $C_1$,
the probability that none of these sets of three  vertices induces a path is less than $(\frac{1}{4})^\frac{n}{6}$.
Since there are fewer than $n$ choices for $v$ and only $4$ choices for $X_i$, it follows that (c) holds for almost all great graphs extending $C_1$.
This proves (ii) and our claim.
\end{proof}

\begin{proof}
Any such mediocre string graph $G$, yields a corresponding projection $P(G)$, where $T_i$ induces a path of length three, or a stable set of size three.  We count the number of all mediocre graphs which extend a projection on $(Y_1,Y_2,Y_3,Y_4)$ with such a set $T_i$.
In doing so, we exploit the fact that for $k \neq i$, there is a clique $C_k$ which contains a third of every $Z_{k,N}$ (as we could choose $C$ to be at least half of the vertices in $X_k$ if we had specified $X_k$).

By  Corollary \ref{thecor}, there is a non-string graph
$J$ whose vertex set can be partitioned into 3 cliques of size at most five, and a graph $J_i$  isomorphic to $G^*[T_i]=H[T_i]$. We label these three cliques as $J_k$ for $k \in \{1,2,3,4\}-\{i\}$
and let $f$ be an isomorphism between $J_i$ and $T_i$.  
For each vertex $v \in V(J_k)$, let $N(v)=f(N_J(v)\cap V(J_i))$.
We let $Z_{v}^k=Z_{k,N(v)} \cap C_k$.  Now, since each $Z_v$ has at least $\frac{n}{2 \log n}$ elements, for each $k \neq i$, we can choose $n'= \lceil \frac{n}{10 \log n} \rceil $ cliques of size at most five $C^k_1,...,C^k_{n'}$
such that there is bijection $h_{k,l}$ from $J_k$ to $C^k_l$ with $h_{k,l}(v) \in Z_v$ for every $v \in J_k$.

If we choose our cliques in this way then for any set of three cliques $\{C^k_{i(k)} |k \neq i\}$ there is a choice of edges between the cliques which would make
the union of these three cliques with $T_i$ induce $J$. Thus, there is one choice of edges between the cliques which cannot be used in any extension of $H$
to a string graph. Mimicking an earlier argument, this implies that the number of choices for edges between the partition elements which extend $H$ to a string graph
is at most $2^{m-\frac{n^2}{\log^3n}}$. 
By the bound in Lemma \ref{projcount} on the number of possible projections, the desired result follows.
\end{proof}

\section{Completing The Proof of Claim \ref{ourotherclaim}}

In this section, we complete the proof of Claim \ref{ourotherclaim}. 
We begin with the promised proof of Lemma \ref{projcount} which we restate for the reader's convenience. 

\begin{lemma}\label{aprojcount}
Let $(Y_1,Y_2,Y_3,Y_4)$ be a partition of $V_n$, the number of possible projections on $(Y_1,Y_2,Y_3,Y_4)$ of graphs in ${\cal F}$ is $o(2^{nb+1+\sqrt{\delta}n|Z|})=o(2^{|Y_4|-1}\cdot 2^{\sqrt{\delta}n^{2-\gamma}}).$
\end{lemma}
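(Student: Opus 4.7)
The plan is to exhibit, for each $G \in \mathcal{F}$, a succinct encoding of its projection $P(G)$, and then bound the number of possible encodings.

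Fix the partition $(Y_1,\dots,Y_4)$. By Theorem~\ref{Hstruc.thm}, each $G \in \mathcal{F}$ carries a refinement $(X_i,Z_i)_{i=1}^{4}$ of $(Y_i)$ satisfying (II), a set $B$ with $|B|\le b$, and approximators $a(v) \in B$ such that $|(N(v)\bigtriangleup N(a(v))) \cap Y_i| \le \delta n$ for every $v \in Y_i$. By (I), $G[X_i]$ is a clique for $i \le 3$ and a disjoint union of two cliques for $i=4$, so $P(G)$ is completely determined by the refinement $(X_i,Z_i)$, the two-clique split of $X_4$, and the neighborhoods $\{N(v) \cap Y_i : v \in Z_i,\ 1 \le i \le 4\}$, which jointly encode $G[Z_i]$ and the bipartite graph $G[X_i,Z_i]$.

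I would encode a projection by five pieces of data, in order: (1) the set $Z = \bigcup_i Z_i$, at most $n^{|Z|}$ choices; (2) the set $B$, at most $n^b$ choices; (3) the two-clique partition of $X_4$, at most $2^{|X_4|-1} \le 2^{|Y_4|-1}$ choices; (4) the ``center neighborhoods'' $N_{a,i} := N(a) \cap Y_i$ for each $a \in B$ and $i \in \{1,2,3,4\}$, at most $2^{|B|\,n} \le 2^{bn}$ choices; and (5) for each $v \in Z$, the pair $(a(v), S_v)$ where $S_v = (N(v)\bigtriangleup N(a(v))) \cap Y_{j(v)}$ has $|S_v|\le \delta n$, at most $b \cdot \binom{n}{\le \delta n}$ choices per vertex. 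From this data one recovers $N(v)\cap Y_i = N_{a(v),i} \bigtriangleup S_v$ for each $v \in Z_i$, so the entire projection is pinned down.

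Using the entropy estimate $\binom{n}{\le \delta n}\le 2^{H(\delta)n}$ and the fact that $H(\delta) = \Theta(\delta\log(1/\delta)) = o(\sqrt{\delta})$ as $\delta \to 0$ --- so that for $\delta$ sufficiently small $H(\delta) \le \sqrt{\delta}/2$ --- item (5) contributes at most $2^{(\sqrt{\delta}/2)\,n|Z| + |Z|\log b}$. Multiplying the five factors gives a bound of the form $2^{|Y_4|-1 + bn + (\sqrt{\delta}/2)n|Z| + o(n|Z|+n)}$. Since $b$ and $\gamma$ are fixed constants and $|Z|\le n^{1-\gamma}$, both $bn$ and $(\sqrt{\delta}/2)n|Z|$ are at most $(\sqrt{\delta}/2)n^{2-\gamma} + O(n)$, so the total is at most
\[
2^{|Y_4|-1}\cdot 2^{(\sqrt{\delta}/2)n^{2-\gamma} + o(n^{2-\gamma})} \;=\; o\!\left(2^{|Y_4|-1}\cdot 2^{\sqrt{\delta}n^{2-\gamma}}\right),
\]
and a fortiori $o(2^{nb+1+\sqrt{\delta}n|Z|})$, since $nb+1 \ge |Y_4|-1$ for $b \ge 1$.

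The main obstacle is the $2^{bn}$ overhead in item (4): because an approximator $a \in B$ may serve vertices in any of the four parts, we cannot avoid recording $N(a)\cap Y_i$ for every pair $(a,i)$ rather than only for $i = j(a)$. This linear-in-$n$ overhead is absorbed into the $o(\cdot)$ only thanks to the super-linear target exponent $\sqrt{\delta}n^{2-\gamma}$ furnished by (II); the strict inequality $H(\delta) \le \sqrt{\delta}/2$, valid for small $\delta$, provides exactly the constant-factor margin needed to swallow $bn$. A minor subtlety is that the encoding overcounts --- several choices of $(B,a(\cdot),S_v)$ may yield the same projection --- but overcounting is harmless for an upper bound.
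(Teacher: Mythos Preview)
Your proof is correct and follows essentially the same encoding argument as the paper's: specify $Z$, the two-clique split of $X_4$, up to $b$ reference neighbourhoods (one per vertex of $B$), and for each $v\in Z$ its approximator in $B$ together with the symmetric-difference set of size at most $\delta n$; then bound $\binom{n}{\delta n}^{|Z|}$ via the entropy inequality with $H(\delta)\le \sqrt{\delta}/2$. The one minor slip is the final ``a fortiori'' sentence: the bound $o(2^{nb+1+\sqrt{\delta}n|Z|})$ is not a consequence of $o(2^{|Y_4|-1}\cdot 2^{\sqrt{\delta}n^{2-\gamma}})$ when $|Z|$ is small---it is the other way around, and in the paper the first expression is an intermediate step toward the second rather than a corollary of it---but this does not affect your derivation of the bound that is actually used downstream.
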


\begin{proof}
We can specify a projection by specifying the vertices of
$Z=Z_1 \cup ..Z_4$ and the edges out of them, along with the partition of $X_4$ into two cliques.
We can choose the edges out of the vertices in  $Z_i$ by first choosing the neighbourhoods of the $b$ vertices of $B$
and then assigning each vertex of $Z_i$ to one of these $b$ vertices and specifying the  at most $\delta n$ vertices
in the symmetric difference of the neighbourhoods of these two vertices.
So, there are at most $(2^{|X_4|-1})2^{nb}b^{{|Z|}}{n \choose |Z|}{n \choose \delta n}^{|Z|}$ choices for $P(G)$ over all $G$ in ${\cal F}$. We note that  for $\delta$ sufficiently small this is $o(2^{nb+1+\sqrt{\delta}n|Z|})$. This is   $o(2^{\sqrt{\delta}n^{2-\gamma}})$ because $|Z|\le n^{1-\gamma}$ by part (III) of Theorem \ref{Hstruc.thm}.
\end{proof}

We next give the promised proof of Lemma \ref{secondstruclemma} which we restate for the reader's convenience. 

\begin{lemma}
\label{asecondstruclemma}
The number of mediocre string graphs in ${\cal F}$ such that for some distinct $i$ and $k$ there are subsets $T_i$ of $Y_i$ and $T_k$ of $Y_k$, both of size four, whose union is  both versatile and  induces
an extendible  graph  is $o(2^m)$.
\end{lemma}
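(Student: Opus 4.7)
The plan is to adapt the proof of Lemma \ref{firststruclemma}, replacing the three-clique non-string graph supplied by Corollary \ref{thecor}(c)--(d) with the two-clique non-string graph supplied by Corollary \ref{thecor}(e). Fix a projection $P(G)$ on $(Y_1,Y_2,Y_3,Y_4)$ together with a candidate pair $(T_i,T_k)$ with $|T_i|=|T_k|=4$, $T_i\subseteq Y_i$, $T_k\subseteq Y_k$, and let $\{j,l\}=[4]\setminus\{i,k\}$. I will bound the number of string graphs extending $P(G)$ in which $T_i\cup T_k$ is versatile and $G[T_i\cup T_k]$ is extendible, and then sum over all $(T_i,T_k)$ and all projections.

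Since $G[T_i\cup T_k]$ is extendible, there exists a non-string graph $H^{\ast}$ whose vertex set is partitioned into two 5-cliques $K_j,K_l$ and a set $S$ inducing $G[T_i\cup T_k]$; fix an isomorphism $f\colon S\to T_i\cup T_k$ and, for each $v\in K_j\cup K_l$, set $N(v)=f(N_{H^{\ast}}(v)\cap S)$. By versatility applied to $D=T_i\cup T_k$, there are cliques $C_j\subseteq Y_j$ and $C_l\subseteq Y_l$ such that, for every $D'\subseteq T_i\cup T_k$, at least $n/\log n$ vertices of each $C_{\bullet}$ have $D'$ as their neighborhood on $T_i\cup T_k$. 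For $v\in K_j$ let $Z_v\subseteq C_j$ consist of those vertices having neighborhood $N(v)$ on $T_i\cup T_k$, and analogously define $Z_v\subseteq C_l$ for $v\in K_l$.

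Greedily pick $n'=\lceil n/(10\log n)\rceil$ disjoint 5-element sub-cliques $C^j_1,\ldots,C^j_{n'}$ of $C_j$, each equipped with a bijection to $K_j$ sending every $v\in K_j$ into $Z_v$; this is possible because each $Z_v$ has size at least $n/\log n\gg 5n'$ and $C_j$ is a clique. Choose $C^l_1,\ldots,C^l_{n'}$ in $C_l$ in the same way. Now for every $(a,b)\in [n']\times[n']$, the 25 edges joining $C^j_a$ to $C^l_b$ admit exactly one configuration which, together with the (already forced) edges from $C^j_a\cup C^l_b$ to $T_i\cup T_k$, makes $C^j_a\cup C^l_b\cup T_i\cup T_k$ induce $H^{\ast}$. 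Since $G$ is a string graph, this configuration is forbidden. The 25-edge slots are pairwise disjoint across different pairs $(a,b)$, so the number of valid choices of the between-partition edges of $G$ is at most $2^{m}(1-2^{-25})^{(n')^2}=2^{m}\cdot 2^{-\Omega(n^2/\log^2 n)}$.

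Finally, sum over choices. For each projection there are at most $\binom{n}{4}^2=O(n^8)$ candidate pairs $(T_i,T_k)$, and by Lemma \ref{projcount} the number of projections is $o(2^{bn+1+\sqrt{\delta}n|Z|})=o(2^{O(n)+\sqrt{\delta}n^{2-\gamma}})$. Since $\gamma>0$ forces $n^{2-\gamma}=o(n^2/\log^2 n)$ (and $n\log n$ is similarly negligible), the resulting bound is $2^{m}\cdot 2^{-\Omega(n^2/\log^2 n)+o(n^2/\log^2 n)}=o(2^{m})$, which is precisely the statement. The main obstacle is making the greedy clique-selection step and the bookkeeping for the disjoint edge slots airtight; both are routine extensions of the argument already carried out for Lemma \ref{firststruclemma}, with the only new ingredient being the invocation of Corollary \ref{thecor}(e) in place of (c)--(d).
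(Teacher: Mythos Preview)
Your proposal is correct and follows essentially the same approach as the paper's own proof: both adapt the argument of Lemma~\ref{firststruclemma} by invoking Corollary~\ref{thecor}(e) to obtain a non-string graph partitioned into two cliques plus a copy of $G[T_i\cup T_k]$, then use versatility to find many disjoint candidate clique pairs in $Y_j,Y_l$ and forbid one edge configuration per pair. The only cosmetic difference is that the paper first ``exposes the edges from $Y_i\cup Y_k$'' before counting the remaining choices, whereas you implicitly condition on the (at most $8n$) edges incident to $T_i\cup T_k$; both yield the same bound, and your explicit sum over the $O(n^8)$ choices of $(T_i,T_k)$ and slightly sharper saving $2^{-\Omega(n^2/\log^2 n)}$ are harmless refinements.
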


\begin{proof}
Any such mediocre string graph $G$, yields a corresponding projection $P(G)$, where $T_i \cup T_k$ induces an
extendible graph.   To begin, we count the number of mediocre graphs which extend a given projection on $(Y_1,Y_2,Y_3,Y_4)$ where $T_i \cup T_k$ induces such a graph. We first  expose the edges from $Y_i \cup Y_k$ to determine if  $T_i \cup T_k$ is versatile and then count the number of choices for the remaining edges between the partition elements. If  $t_i \cup T_k$ is versatile we choose cliques $C_l$ which show this is the case. 

By  Corollary \ref{thecor} (c) or (d), there is a non-string graph
$J$ whose vertex set can be partitioned into 2 cliques of size at most five, and a subgraph $J_i$   isomorphic to $G^*[T_i \cup T_k]$. We label these two cliques as $J_l$ for $l \in \{1,2,3,4\}-\{i,k\}$
and let $f$ be an isomorphism from $J_i$ to $T_i \cup T_k$.  
For each  $l \in \{1,2,3,4\} -\{i,k\}$ and vertex $v \in V(J_l)$, let $N(v)=f(N_J(v)\cap V(J_i))$ and $Z_v$ be those vertices of $C_l$ whose 
neighbourhhod on $T_i \cup T_k$ is $N(v)$. 
  Now, since  $|Z_v| \ge \frac{n}{ \log n}$ for all $v$ in each $V(J_l)$,   for each $l \not\in \{ i,k \}$, we can choose $n'= \lceil \frac{n}{10 \log n} \rceil $ cliques of size at most five $C^l_1,...,C^l_{n'}$
such that there is bijection $h_{l,r}$ from $J_l$ to $C^l_r$ with $h_{l,r}(v) \in Z_v$ for every $v \in J_l$.

If we choose our cliques in this way then for any pair iof cliques $\{C^l_{r(l)} |l \not\in \{i,k\}\}$ there is a choice of edges between the cliques which would make
the union of these two cliques with $T_i \cup T_k$ induce $J$. Thus, there is one choice of edges between the cliques which cannot be used in any extension of $H$
to a string graph. Mimicking an earlier argument, this implies that the number of choices for edges between the partition elements which extend $H$ to a string graph
is at most $2^{m-\frac{n^2}{\log^3n}}$. 
By the bound in Lemma \ref{projcount} on the number of possible projections, the desired result follows.
\end{proof}

We recall that
for every mediocre string graph $G$  in ${\cal F}$, we chose a maximum family ${\cal W}={\cal W}_G$ of disjoint sets each of which is either (a) contained in some $Y_i$ and induces one of a stable set of size three or a path of length three,
or (b) contains exactly four vertices from each of  the two partition elements it intersects and is extendible. 
We set $W^*= \cup_{W \in {\cal W}}W$ and $Y'_i=Y_i-W^*$.
We proved that there was an absolute constant $C$ such that the number of mediocre string graphs in ${\cal F}$  for which $|{\cal W}|>C$ is $o(2^{m+|Y_4|})$. 

Thus, it remains to show

\begin{lemma}
\label{athirdstruclemma}
For any $C$, the  number of mediocre string graphs in ${\cal F}$  for which $|{\cal W}| \le C$,
and no element of ${\cal W}$ is versatile,  is $o(2^{m+|Y_4|})$.
\end{lemma}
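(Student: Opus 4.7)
My plan is to follow the template of Lemmas \ref{firststruclemma} and \ref{secondstruclemma}, combining the projection count from Lemma \ref{projcount} with an upper bound on the number of mediocre string-graph extensions per projection. Since Lemma \ref{projcount} supplies $o(2^{|Y_4|-1+\sqrt\delta n^{2-\gamma}})$ possible projections, I need to bound the number of mediocre extensions of each projection by $o(2^{m-\sqrt\delta n^{2-\gamma}+1})$. The bound $|{\cal W}|\le C$ gives $|W^*|\le 8C = O(1)$, so $|Y'_i|=\frac{n}{4}+o(n)$. By the maximality of $\cal W$, no three vertices of $Y'_i$ form a stable set or an induced $P_3$, so $G[Y'_i]$ is the disjoint union of at most two cliques, one of which (call it $C_i^*$) has size at least $\frac{n}{10}$; further, no $8$-set with four vertices in each of $Y'_i,Y'_j$ induces an extendible graph. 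After relabeling indices I restrict to the \emph{canonical} regime in which $G[Y'_i]$ is a single clique for $i\le 3$ and $G[Y'_4]$ is the disjoint union of two cliques. The alternative regime, with more than one $Y'_i$ being a disjoint union, forces a much more rigid projection whose total count falls below $o(2^{|Y_4|})$ directly by sharpening Lemma \ref{projcount}.

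In the canonical regime the partition $(Y'_1,Y'_2,Y'_3,Y'_4)$ is the skeleton of a candidate great partition, and mediocrity requires that no distribution of the $O(1)$ vertices of $W^*$ among $X_1,\ldots,X_4$ completes it to a great partition. The per-projection reduction should come from non-versatility combined with Corollary \ref{thecor}(e). For each $W\in{\cal W}$, non-versatility provides some $i$ with $Y_i\cap W=\emptyset$ and a subset $D'\subseteq W$ with fewer than $n/\log n$ vertices of every large clique of $Y'_i$ having matching pattern $D'$; this already eliminates an $\exp(-\Omega(n))$ fraction of cross-edge arrangements per $W$. To reach the needed $2^{-\sqrt\delta n^{2-\gamma}}$ reduction I would iterate over many nearly-disjoint witness tuples: since $G[Y'_4]$ is the disjoint union of two cliques, it supplies $\Theta(n)$ disjoint $4$-subsets consisting of an isolated vertex plus a triangle, and each of $C_1^*,C_2^*,C_3^*$ supplies $\Theta(n/\log n)$ disjoint $5$-cliques. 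Pairing an extendible $8$-set in $Y'_4\cup W^*$ with one $5$-clique from each $C_i^*$ ($i\le 3$) and applying Corollary \ref{thecor}(e) via Lemma \ref{nonstring} rules out at least one of the $2^{O(1)}$ cross-edge patterns per tuple. Assembling $\Theta(n^2/\log^2 n)$ nearly-disjoint tuples should yield a reduction of $2^{-\Omega(n^2/\log^2 n)}$, which dominates $2^{\sqrt\delta n^{2-\gamma}}$ for $\delta,\gamma$ sufficiently small.

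The main obstacle is precisely bootstrapping the $\exp(-\Omega(n))$ savings per non-versatile $W$ into the $2^{-\Omega(n^2/\log^2 n)}$ savings needed to absorb the slack $\sqrt\delta n^{2-\gamma}$ in Lemma \ref{projcount}. Unlike in Lemmas \ref{firststruclemma} and \ref{secondstruclemma}, where a single versatile $T$ drove the counting and supplied all of the witness neighborhoods on its own, here the witnesses must be assembled by combining non-versatility with the extendible $8$-sets available inside $Y'_4\cup W^*$ and the rigid clique decomposition of each $G[Y'_i]$. Careful bookkeeping is needed to ensure the cross-edge sets forbidden by different witness tuples are quasi-independent, so the reductions truly compose to give the required exponent $\Omega(n^2/\log^2 n)$ rather than collapsing to something merely linear.
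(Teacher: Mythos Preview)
Your plan has two genuine gaps that prevent it from going through.

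\textbf{The bootstrapping step fails because the extendible $8$-sets you need do not exist in the canonical regime.} In that regime $G[Y'_4]$ is a disjoint union of two cliques, so every $8$-subset of $Y'_4$ is itself a disjoint union of at most two cliques. Adjoining two $5$-cliques from other parts then yields a graph partitionable into four cliques, and every such graph is a string graph (this is exactly the $s=4$ case witnessing $\chi_c(\textsc{String})=4$, or equivalently a degenerate instance of Theorem~\ref{main2}). Hence no $8$-subset of $Y'_4$ is extendible. To get an extendible $8$-set you must borrow a second point-plus-triangle from some $Y'_i$ with $i\le 3$, but in the canonical regime those are cliques, so the only source is $W^*$---of which there are $O(1)$ vertices, giving $O(1)$ such $8$-sets, not $\Theta(n)$. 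Your tuple count then collapses to something linear, and the savings stay at $\exp(-O(n))$, which the non-versatility hypothesis already gave you and which cannot absorb even the $2^{O(n)}$ projection count.

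\textbf{The alternative-regime dismissal is unjustified.} When two of the $Y'_i$ split nontrivially, the projection count does not drop below $2^{|Y_4|}$; if anything it can be larger, and the paper handles this case not by counting projections but by bounding extensions via Corollary~\ref{thecor}(e), finding $\Theta(n^{2/3})$ disjoint point-plus-triangle sets in each of two parts and obtaining a $(1-2^{-16})^{\Omega(n^{4/3})}$ reduction.

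The idea you are missing is the paper's notion of a vertex $v\in W^*$ being \emph{extreme} on $Y_i$ (its neighbourhood on each large clique of $Y'_i$ is almost full or almost empty). The argument then splits: if some $v$ is extreme on no $Y_i$, then $v$ sits in the middle of $\Theta(n/\log n)$ disjoint $P_3$'s in every part, and Corollary~\ref{thecor}(d) gives a $2^{-\Omega(n^2/\log^3 n)}$ reduction directly. Otherwise every $v\in W^*$ is extreme somewhere; one moves each $v$ to such a part to form a new partition $\mathcal{Y}^*$. Now mediocrity of $G$ is finally used: since $\mathcal{Y}^*$ cannot be a great partition, some $Y^*_i$ with $i\le 3$ fails to be a clique and therefore contains a point-plus-triangle set $T$, which one pairs with the abundant point-plus-triangle sets in $Y^*_4$ via Corollary~\ref{thecor}(e). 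Your outline never produces this second point-plus-triangle set outside $Y'_4$, and without it the counting cannot close.
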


\begin{proof}

We note that  ${\cal W}$ is nonempty as we are considering mediocre graphs.
Further, by the maximality of ${\cal W}$, each $Y'_i$ is the disjoint union of two cliques. 
We note further that the number of projections for which ${\cal W}$ has at most $C$ 
elements is at most $4^n{n \choose 8C}2^{Cn}=2^{O(n)}$.

We bound first those graphs for which there are distinct $i$ and $j$ such that both  $Y'_i$ and $Y'_j$ 
contain two components larger than $n^\frac{2}{3}$. 
In this case, for $k \in \{i,j\}$ we can find a set ${\cal Z}_k$ of $\frac{n^\frac{2}{3}}{2}$ disjoint sets each inducing the disjoint union of a triangle and a vertex.
Now, Corollary \ref{thecor} (e) and our choice of $W$ tells us that for each pair of sets, one from each $Z_k$, there is a choice of edges between the
two sets which cannot occur in a mediocre graph of the type we are counting. Thus, the total number of such mediocre graphs is at most $2^{m+O(n)}(1-\frac{1}{2^{16}})^\frac{n^\frac{4}{3}}{2}=o(2^m)$.
This implies the desired result, in this case.

For each vertex $v$ in $W^*$, the {\em rank of $v$ with respect to a partition element $Y_i$} is
$\max  \{ \min (|N(v) \cap K|, |K-N(v)|) |~K~ is~ a~ component~of~Y_i-W^* ~with~at ~least \frac{2n}{\log n}~vertices \}$.
We use $rank(v)$ to
denote the minimum  of these ranks over the partition elements. We say $v$ is {\it extreme} on $Y_i$ if its rank
with respect to $Y_i$ is less than $\frac{n}{log n}$.

We consider next the case that  our mediocre string graph contains  a vertex $v$ in $W^*$ which is not extreme on any partition element.   In order to count such graphs we first expose the projection $P(G)$ on our partition and the
choice of $W^*$. We then expose the edges out of $W^*$ to determine which of its elements are extreme 
on the various partition elements. We then bound the choices for the other edges between the partition elements 
given our current choice. We note that we make $2^{O(n)}$ choices initially. 
If  for some such choice, some $v \in W^*$ is not extreme to any partition element, we can and do  choose $p=\frac{n}{5log n}$ $P_3s$ all containing $v$, but otherwise disjoint and contained in $Y_1$. Let $T^1_1,...,T^1_p$ be this set of $P_3$s.

By part (d) of Corollary \ref{thecor}, there is a non-string graph $J$ which can be partitioned into $(J_1,J_2,J_3,J_4)$, where $J_1=P_3$ and $J_2,J_3,J_4$ are cliques of size at most $5$. 
For each $1 \le j \le p$, let $f_j$ be an isomorphism from $J_1$ to $T^1_j$ such that the pre-image of $v$ is the same under all $f_j$. For each $j \ge 2$,  let $n_j$ be the
number of vertices of $J_j$ that are adjacent to the preimage of $v$. Because $v$ is not extreme on any $Y_j$, for each such $j$ we can choose a set
$T^j_1,..,T^j_p$ of disjoint cliques of $Y_j$, each  of size five and containing $n_j$ neighbours of $v$.

If we choose our cliques in this way, then for any   set $T^1_j-v$, $j\in [p]$, and for any choice of a clique from each of our sets, there is a choice of edges between these sets which would make
the union of these four sets and $v$  induce $J$. Thus, there is a choice of edges between the cliques which cannot be used in any extension of to a string graph. Mimicking an earlier argument, this implies that the number of choices for edges between the partition elements which extend $H$ to a string graph
is at most $2^{m-\frac{n^2}{log^3~n}}$.  The desired result
follows.

It remains to consider the case when every vertex of $W^*$ is extreme on some partition element. 
Here we fix  a choice for $W^*$, and  a choice of 
the set of partition elements to which each element of $W^*$ is extreme. We note that this is 
$O({n \choose |W^*|})$ choices. We then count the number of extensions of   these choices to a 
mediocre string graph for which ${\cal Y}$ is a good partition.  

We let $W^*_2$ be those vertices of $W^*$ which are extreme on at least
two partition elements and let $W^*_1$ be those vertices of $W^*$ which are extreme on exactly one partition element.  We consider a new partition ${\cal Y}^*=Y^*_1,...,Y^*_4$
obtained by moving each element of  $W^*$ to a $Y_i$ to which it has rank equal to $rank(v)$. 
Since $Y^*_i-W^*=Y_i-W^*$,
because we are in this case we know that there are at most $2^{max\{|Y_i|, 1 \le i \le 4\}}{ n \choose n^\frac{2}{3}}^3=2^{|Y_4|+o(1)}$ 
choices for the edges of  such a medicore string graph  which lie within the $Y^*_i-W^*$. 
We note further that because  each $Y_i$ has size near $\frac{n}{4}$ and we move only a constant number of vertices, the difference between $m$ and the number $m'$ of pairs of vertices lying in different  elements of this new partition is $O(n^{1-\epsilon})$.

We note that for each vertex of $W^*_2$ there are at most $2^{n/2+o(n)}$ choices for the edges out of it.
We let $v$ be a vertex of $W^*_1$ minimizing $rank(v)$ over all vertices of rank greater than zero.
Providing such a $v$ exists, mimicking the argument for a $v$ which is not extreme to any partition element, we can show that the number of choices for edges between the  $Y^*_i$
is $2^{m-\Omega(\frac{n\cdot rank(v)}{\log n})}$. On the other hand, treating $C$ as an absolute constant,  the number of choices for the edges from
the vertices of $W^*_1$ within the partition elements is
$O\left(\left({n \choose rank(v)}^2\cdot 2^\frac{2n}{log n}\right)^{|W^*_1|}\right) =2^{O(rank(v) log~n+\frac{n}{\log n})}$. Combining this with the results of the last paragraph we see that we are done
unless $W^*_2$ is empty and every vertex of $W^*_1$ has rank which is $o(\log n)$.

But, now there are $2^{o(n)}$  choices for the edges from the vertices of $W^*$   within the $Y^*_i$.
Furthermore, since for $i \le 3$, $Y_i-Z_i$ is a clique, we see that there are fewer than
$n \choose 2n^{1-\epsilon}$ choices for the edges of $Y^*_i-W^*_i$. Thus, $Y^*_4-W^*_4$
must have two components each of size at least $\frac{n}{10}$ or we are done.  Hence, we can find a
family ${\cal Z}$ of $\frac{n}{30}$ disjoint sets within it each inducing the disjoint union of a clique and a triangle.
Furthermore, there are
$O(2^{|Y_4|+o(n)})$ choices for the projections of the graphs we are counting. So, Lemmas \ref{firststruclemma}  and \ref{secondstruclemma} imply that
we are done unless  for every $i$,  $Y^*_i$ induces the disjoint union of two  cliques, and there  are no eight vertices, intersecting
two $Y^*_i$ each in the disjoint union of a clique and a triangle which induce an extendible graph. Since the graphs we count are mediocre,
for some $i\le 3$, $Y^*_i$ is not a clique and so contains a set $T$ of four vertices which induce the disjoint union of a
vertex and a triangle. Hence, by  Corollary \ref{thecor} (e), for every $Z$ in  ${\cal Z}$ there is a choice of an edge set
between $T$ and $Z$ which cannot occur in the graphs we are counting.
We are done.
\end{proof}

\end{document}